\theoremstyle{plain}
\newtheorem{thm}{Theorem}[section]
\newtheorem{prop}[thm]{Proposition}
\newtheorem{lem}[thm]{Lemma}
\newtheorem{cor}[thm]{Corollary}
\theoremstyle{definition}
\newtheorem{defn}[thm]{Definition}
\newtheorem{rmk}[thm]{Remark}
\numberwithin{equation}{section}
\numberwithin{figure}{section}
\newtheorem{claim}[thm]{Claim}
\newcommand{\case}[2]{ \noindent \textbf{Case #1:} #2 \\}
\newcommand{\scase}[2]{ \noindent \textbf{Sub-case #1:} #2 \\}
\newcommand{\M}{\mathcal{M}}
\newcommand{\OO}{\mathcal{O}}
\newcommand{\oo}{\mathfrak{O}}
\newcommand{\C}{\mathbb{C}}
\newcommand{\N}{\mathbb{N}}
\newcommand{\R}{\mathbb{R}}
\newcommand{\Z}{\mathbb{Z}}
\newcommand{\T}{\mathbb{T}}
\DeclareMathOperator{\im}{Im}
\newcommand{\wt}{\widetilde}
\newcommand{\dd}{\delta}
\newcommand{\DD}{\Delta}
\newcommand{\w}{\omega}
\newcommand{\ee}{\varepsilon}
\DeclareDocumentCommand{\abs}{s m}{
  \operatorname{}
  \IfBooleanTF{#1}{#2}{\left|#2\right|}}
\DeclareDocumentCommand{\norm}{s m}{
  \operatorname{}
  \IfBooleanTF{#1}{#2} {\left\| #2\right\|}}
\DeclareDocumentCommand{\inner}{s m}{
  \operatorname{}
  \IfBooleanTF{#1}{#2}{\left \langle#2\right \rangle}}
\DeclareDocumentCommand{\parenthese}{s m}{
  \operatorname{}
  \IfBooleanTF{#1}{#2}{\left(#2\right)}}
\DeclareDocumentCommand{\square}{s m}{
  \operatorname{}
  \IfBooleanTF{#1} {#2}{\left[#2\right]}}
\DeclareDocumentCommand{\bracket}{s m}{
  \operatorname{}
  \IfBooleanTF{#1}{#2}{\left\{#2\right\}}}
\begin{document}
\title[GWP for the quintic NLS]{On the global well-posedness for the periodic quintic nonlinear Schr\"odinger equation}

\author[Yu and Yue]{Xueying Yu and Haitian Yue}

\address{Xueying Yu
\newline \indent Department of Mathematics, MIT\indent 
\newline \indent  77 Massachusetts Ave, Cambridge, MA 02139.\indent}
\email{xueyingy@mit.edu}

\address{Haitian Yue
\newline \indent Department of Mathematics, University of Southern California\indent 
\newline \indent  3620 S. Vermont Ave., Los Angeles, CA 90089.\indent}
\email{haitiany@usc.edu}

\begin{abstract}
In this paper, we consider the initial value problem for the quintic, defocusing nonlinear Schr\"odinger equation on $\Bbb T^2$ with general data in the critical Sobolev space $H^{\frac{1}{2}} (\Bbb T^2)$. We show that if a solution remains bounded in $H^{\frac{1}{2}} (\Bbb T^2)$ in its maximal interval of existence, then the solution is globally well-posed in $\Bbb T^2$.

\vspace{0.1cm}
\noindent
\textbf{Keywords}: NLS, global well-posedness, compact manifold\\

\noindent
\textit{Mathematics Subject Classification (2020):} 35Q55, 35R01, 37K06, 37L50\\
\end{abstract}

\maketitle

\nocite{*}
\setcounter{tocdepth}{1}
\tableofcontents

\parindent = 10pt     
\parskip = 8pt

\section{Introduction}

We consider the initial value problem for the quintic,  defocusing nonlinear Schr\"odinger equation (NLS) on the two dimensional torus $\T_{\lambda}^2$:
\begin{align}\label{NLS}
\begin{cases}
(i \partial_t  + \Delta_{\T^2}) u = \abs{u}^4 u, & t \in \R , x \in \T_{\lambda}^2,\\
u(0,x) = u_0(x), & 
\end{cases}
\end{align}
where $u=u(t,x)$ is a complex-value function in spacetime $\R \times \T_{\lambda}^2$. Here $\T_{\lambda}^2$ is a general rectangular tori of dimension two, that is, 
\begin{align*}
\T_{\lambda}^2 : = \R^2/(\lambda_1 \Z \times \lambda_2 \Z), \quad \lambda= (\lambda_1, \lambda_2),
\end{align*}
where $\lambda_1, \lambda_2 \in  \R^+$. $\T_{\lambda}^2$ is called a rational (or irrational) tori if the ratio $\lambda_1/ \lambda_2$ is a rational (or irrational) number. This paper, in fact,  deals with general tori since our proof does not see any difference from being a rational torus or irrational one. Due to this reason, we will denote $\T_{\lambda}^2$ by $\T^2$ for convenience.

\subsection{Setup of NLS}
In general, the nonlinear Schr\"odinger equation with the power type nonlinearity is given by
\begin{align}\label{pNLS}
\begin{cases}
(i \partial_t  + \Delta_{\M^d}) u = \pm \abs{u}^{p-1} u, & t \in \R , x \in \M^d,\\
u(0,x) = u_0(x) & 
\end{cases}
\end{align}
where $\M^d$ is a d-dimensional manifold. In this paper, we only focus on the cases when $\M=\R$ or $\T$.  The \eqref{pNLS} is known as {\it defocusing} if the sign of the nonlinearity is positive and  {\it focusing} if negative.

Solutions of \eqref{pNLS} conserve both the mass:
\begin{align}\label{intro Mass}
M(u(t)) : = \int_{\M^d} \abs{u(t,x)}^2 \, dx = M(u_0) ,
\end{align}
and the energy:
\begin{align}\label{intro Energy}
E(u(t)) :  = \int_{\M^d} \frac{1}{2} \abs{\nabla u(t,x)}^2 \pm \frac{1}{p+1} \abs{u(t,x)}^{p+1} \, dx = E(u_0) .
\end{align}
Conservation laws of mass and energy give the control of $L^2$- and $\dot{H}^1$-norms of the solutions. 

The equation \eqref{pNLS} also conserves momentum,
\begin{align}\label{intro Momentum}  
\mathcal{P}(u(t)): =  \int_{\M^d} \im [\bar{u}(t,x) \nabla u(t,x)] \, dx = \mathcal{P}(u_0) ,
\end{align}
which plays a fundamental role in the derivation of the interaction Morawetz estimates \cite{CKSTT1, PV, CGT1, CGT2}. It should be noted that momentum is naturally associated to the regularity, however it does not control the $\dot{H}^{\frac{1}{2}}$-norm globally in time.

In Euclidean spaces,  the equation \eqref{pNLS} enjoys a {\it scaling symmetry}, which defines the criticality (relative to scaling) for this equation, and this {\it critical scaling} exponent{\footnote{More precisely,  the scaling of  \eqref{pNLS} is $u(t,x) \mapsto \lambda^{\frac{2}{p-1}} u( \lambda^2 t , \lambda x)$, under which the only invariant homogeneous $L_x^2$-based Sobolev norm  of initial data that is the $\dot{H}^{s_c} (\R^d)$-norm.}}  of \eqref{pNLS} on $\R^d$ is given by 
\begin{align*}
s_c:=\frac{d}{2} - \frac{2}{p-1} . 
\end{align*}
Accordingly, the problem NLS can be classified as {\it subcritical} or {\it critical} depending on whether the regularity of the initial data is below or equal to the scaling $s_c$ of \eqref{pNLS}. We will adopt the language in the scaling context in other manifolds  $\M^d$.

Our goal in this paper is to study the global well-posedness{\footnote{By {\it global/local well-posedness}, we mean the global/local in time existence, uniqueness and continuous dependence of the data to solution map.}} of the initial value problem \eqref{NLS}.  Before stating our main result, let us first review related works in the well-posedness theory of NLS. 
\subsection{History and related works}

\subsubsection{Well-posedness results in $\R^d$}

In both sub-critical regime (data in $H^s(\R^d)$, $s> s_c$) and critical regime (data in $\dot{H}^{s_c}(\R^d)$), the local well-posedness follows from the Strichartz estimates (see \cite{GV,Y,KT} for the Strichartz estimates in the Euclidean spaces $\R^d$) and a fixed point argument (see \cite{CW1, CW2, CW3, Ca}). In the former case, the time of existence depends solely on the $H^s$-norm of the data while in the latter one it depends also on the profile of the data.
Due to this reason, in the energy-subcritical regime ($s_c <1$), the conservation of energy gives global well-posedness in $H^1$ by iteration, in the energy-critical regime (see \cite{CW3, Ca}), a similar iteration, however, only gives global in time existence and scattering{\footnote{In general terms, with {\it scattering} we intend that the nonlinear solution as time goes to infinity approaches a linear one in some space $\mathcal{H}$.   To be precise,  there exists a solution $u_{\pm} (t)$ to the corresponding linear dispersive  equation s.t. $\lim_{t \to \pm \infty} \norm{u(t) - u_{\pm} (t)}_{\mathcal{H}} =0$.}} for small (in energy) initial data.  It is because for large initial data the time of existence depends also on the profile of the data (not a conserved quantity), which may lead to a shrinking interval of existence when iterating the local well-posedness theorem. 
Similarly, in the mass-subcritical regime ($s_c < s=0$), the conservation of mass gives global well-posedness in $L^2$ while if $s_c =0$, $p = 1+ \frac{4}{d}$, one cannot extend the local given solution to a global solution by iteration due to the same reason.

In the energy-critical case, Bourgain \cite{Bour99} first proved global existence and scattering in three dimensions for large finite energy data which is assumed to be radial. A different proof of the same result is given by Grillakis in \cite{G3}. Then, a breakthrough was made by Colliander-Keel-Staffilani-Takaoka-Tao \cite{CKSTT2}. They removed the radial assumption and proved global well-posedness and scattering of the energy-critical problem in three dimensions for general large data. Later \cite{RV, V} extended the result in \cite{CKSTT2} to higher dimensions. 
In \cite{KM1} Kenig and Merle proposed a road map to tackle critical problems. In fact, using a contradiction argument they first proved the existence of a critical element such that the global well-posedness and scattering fail. Then relying on a concentration compactness argument they showed that this critical element enjoys a compactness property up to the symmetries of this equation. In the second step, the problem is reduced to a rigidity theorem that precluded the existence of such critical element (also known as {\it  minimal blow-up solutions}).
It is worth mentioning that the concentration compactness method that they applied was first introduced in the context of Sobolev embeddings in \cite{Ge}, nonlinear wave equations in \cite{BG} and Schr\"odinger equations in \cite{MV, K1, K2}.
For the mass-critical regime, the radial case was first studied in \cite{TVZ1, KTV}. Then Dodson proved the global well-posedness and scattering in any dimension for nonradial data \cite{D1, D2, D3}. A key ingredient in Dodson's work is to prove a long time Strichartz estimate to rule out the minimal blow-up solutions.

For the global well-posedness at a level in which conservation laws are not available ($s_c \neq 0,1$), there is no control in the growth in time of the $\dot{H}^{s_c}$-norm of the solutions. Kenig and Merle \cite{KM3} showed for the first time that if a solution of the defocusing cubic NLS in three dimensions remains bounded in the critical norm $\dot{H}^{\frac{1}{2}}$ in the maximal time of existence, then the interval of existence is infinite and the solution scatters using concentration compactness and rigidity argument. \cite{Mu2} extended the $\dot{H}^{\frac{1}{2}}$ critical result in \cite{KM3} to dimensions four and higher. 
The analogue of the $\dot{H}^{\frac{1}{2}}$-critical result in dimensions two was obtained by the first author in \cite{Yu}. It is worth mentioning that the main difficulty in this low dimension setting is  the weaker time decay, which results in (1) a significantly different interaction Morawetz estimates in two dimensions from dimensions three and above, and (2) the failure of the endpoint Strichartz estimate.

\subsubsection{Well-posedness results in $\T^d$ and more}
Similar as in $\R^d$ settings, the local theory in sub-critical regimes in $\T^d$ also relies on the Strichartz estimates. In contrast, the Strichartz estimates in compact manifold are significantly different from the ones in the Euclidean spaces because the dispersion in such settings is weaker than unbounded manifold. This same reason also leads to nonexistence of scattering in $\T^d$. On rational tori $\T^d$, the Strichartz estimates were initially developed by Bourgain \cite{Bour93}. The proof of \cite{Bour93} employed a number theoretical related lattice counting argument, which works better in the rational tori than irration tori. A recent work of Bourgain-Demeter \cite{BD} proved the optimal Strichartz estimates for both rational and irrational tori using a completely different approach, which does not rely on the lattice counting lattice{\footnote{In fact, \cite{BD} proved the $l^2$ decoupling conjecture, and the full range of expected $L_{t,x}^p$ Strichartz estimates on both rational and irrational tori was derived as a consequence of this decoupling conjecture.}.

More results on the study of the Strichartz estimates on tori and well-posedness problem in sub-critical regimes can be found in \cite{Bour07, DPSN, CaW, Bour13, GOW, KV4, De, DGG, FSWW}. On more general compact manifolds, Burq-Gerard-Tzvetkov derived the Strichartz type estimates and applied these estimates to the global well-posedness of NLS on compact manifolds in a series of their papers \cite{BGT1, BGT2, BGT3, BGT4}. We also refers to \cite{Z, GP, H1, H2} and references therein for the other results of global existence sub-critical NLS on compact manifolds.

In the critical regime, due to the vital difference (a logarithmic loss) in the Strichartz estimates, one cannot close the fixed point argument and obtain a local theory in the same spirit the critical problems were treated in $\R^d$. Herr-Tataru-Tzvetkov \cite{HTT1} first proved the global well-posedness with small initial data in $H^1$ for the energy-critical NLS on $\T^3$. To overcome the logarithmic loss, they implemented a crucial trilinear Strichartz type of estimates in the context of the critical atomic spaces $U^p$ and $V^p$, which were originally developed in unpublished work on wave maps by Tararu. These atomic spaces were systematically formalized by Hadac-Herr-Koch \cite{HHK} (see also \cite{KT1, HTT2}) and now the atomic spaces $U^p$ and $V^p$ are widely used in the field of the critical well-posedness theory of nonlinear dispersive equations.

For large data in the critical regime, the first result in compact manifolds is by Ionescu-Pausader \cite{IP1}, where they showed that the global well-posedness of the energy-critical NLS on rational $\T^3$. In a series of papers, Ionescu-Pausader \cite{IP1, IP2} and Ionescu-Pausader-Staffilani \cite{IPS} developed a method (the {\it `black-box' trick}) to obtain energy-critical large data global well-posedness in more general manifolds based on the corresponding results on the Euclidean spaces in the same dimensions. More precisely, using the Kenig-Merle road map and an ad hoc profile decomposition technique, the authors of \cite{IP1, IP2, IPS}  successfully transferred the already available energy-critical global well-posedness results in $\R^d$ into the  $\T^3$, $\T^3\times\R$, and $\mathbb{H}^3$ settings{\footnote{Here, the product spaces $\T^m \times \R^n$ ($m,n \in \Z^+$) are known as waveguides, and $\mathbb{H}^d$ are $d$-dimensional hyperbolic spaces.}}. This methodology now has been widely applied to many manifolds (see \cite{PTW, St, Z1, Z2}). Let us also point out two recent results \cite{Yue, YYZ} towards the understanding the long time behavior of the focusing NLS on the tori and waveguide manifolds using {\it `black-box' method}. On compact manifolds, one does not expect the scattering effect, however, on some product spaces, some nice long time behaviors (such as scattering, modified scattering) of the solutions have been studied \cite{TV1, HP, TV2, HPTV, GPT, CGYZ, Z1, Z2}.

Now we are ready to state our main result in this work.
\subsection{Main result}
\begin{thm}[Global well-posedness of \eqref{NLS}]\label{thm Main}
Let $u_0 \in H^{\frac{1}{2}}(\T^2)$ and   $u : I \times \R^2 \to \C$  be a maximal-lifespan solution to \eqref{NLS} with initial data $u_0$ such that for any $T>0$, 
\begin{align*}
\sup\limits_{t \in I \cap [-T,T]  } \norm{u(t)}_{ H_x^{\frac{1}{2}}(\T^2)} < \infty .
\end{align*}
Then the solution is globally well-posed. In addition, the mapping $u_0 \mapsto  u$ extends to a continuous mapping from $H^{\frac{1}{2}} (\T^2)$ to the $X^{\frac{1}{2}} ([-T, T])$ space{\footnote{Note that the space $X^{\frac{1}{2}} (I) \subset C(I: H^{\frac{1}{2}} (\T^2))$ is the adapted atomic space (see Definition \ref{defn X^s Y^s}).}}.
\end{thm}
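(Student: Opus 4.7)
My plan is to follow the Kenig--Merle concentration-compactness/rigidity roadmap in the Ionescu--Pausader \emph{`black-box'} formulation, transferring the Euclidean $\dot H^{1/2}$-critical result of Yu on $\mathbb R^2$ to the torus setting. Let me lay out the steps.

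\textbf{Step 1 (Local theory and stability).} First I would set up a local well-posedness theory for \eqref{NLS} in the critical adapted atomic space $X^{1/2}(I)$, by rewriting the equation as a Duhamel fixed point and controlling the quintic nonlinearity via a multilinear estimate in $X^{1/2}$ that uses the Bourgain--Demeter $\ell^2$-decoupling Strichartz estimates on $\mathbb T^2$ (since dimension $2$ is subcritical for $L^6_{t,x}$-decoupling, this gives the right building block for the quintic). The time of existence will depend not only on $\|u_0\|_{H^{1/2}}$ but also on the profile of $u_0$; the key extraction from this step is a blow-up alternative stated in terms of a scattering-type critical norm $\|u\|_{Z(I)}$ (typically a suitable $L^p_{t,x}$-norm tied to the Strichartz pair), together with a stability/long-time perturbation lemma in $X^{1/2}$ that tolerates $X^{1/2}$-small errors in the forced equation.

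\textbf{Step 2 (Reduction to a critical element).} Assume for contradiction the theorem fails. Then there exists an initial datum of finite $H^{1/2}$ norm whose maximal solution remains bounded in $H^{1/2}$ but whose $Z$-norm blows up. Introduce the supremum threshold
\begin{equation*}
E_c := \sup\bigl\{\, A : \text{every solution with } \sup_{t\in I}\|u(t)\|_{H^{1/2}(\mathbb T^2)} < A \text{ has } \|u\|_{Z(I)} < \infty \,\bigr\} ,
\end{equation*}
and run a minimizing sequence of solutions whose $H^{1/2}$ bound approaches $E_c$ from above while $\|u_n\|_{Z}\to\infty$. The goal is to extract, after symmetries, an almost-periodic critical element of \eqref{NLS}.

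\textbf{Step 3 (Profile decomposition and the main obstacle).} The heart of the argument, and what I expect to be the main obstacle, is a linear profile decomposition for bounded sequences in $H^{1/2}(\mathbb T^2)$ that accommodates two species of profiles: (i) \emph{torus profiles}, which are concentrated at unit scale and evolve under the $\mathbb T^2$ linear flow, and (ii) \emph{Euclidean profiles}, which are concentrated at vanishing scales $N_n^{-1}\to 0$ and, after rescaling and localization, converge to honest $\mathbb R^2$ profiles. The difficulty here comes from the low critical regularity $s_c=1/2$: there is no conservation law controlling this norm, the endpoint Strichartz estimate fails in dimension two, and the torus dispersion is much weaker than the Euclidean one. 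I would adapt the Ionescu--Pausader--Staffilani profile decomposition to this $H^{1/2}$ setting, prove an \emph{extinction lemma} for Euclidean profiles (saying that an $\mathbb R^2$-profile, after rescaling back to the torus, behaves essentially like its Euclidean nonlinear evolution for the relevant time window), and use Yu's $\dot H^{1/2}$ large-data global well-posedness and scattering result on $\mathbb R^2$ as the ``black box'' to conclude that each Euclidean nonlinear profile has finite global $Z$-norm.

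\textbf{Step 4 (Rigidity).} Plugging the profile decomposition into the nonlinear flow via the stability lemma, each Euclidean profile contributes a bounded $Z$-norm (from the $\mathbb R^2$ result), and the error terms are controlled by the stability of Step 1. This forces the decomposition to collapse to a single torus profile, i.e.\ an almost-periodic critical element of \eqref{NLS} living on the torus with bounded $H^{1/2}$-norm and infinite $Z$-norm. A rigidity argument tailored to the torus, using the conservation of mass and momentum together with the almost-periodicity (and excluding the soliton-type scenario at this subenergy regularity), then rules out such an object, completing the contradiction. The final assertion on continuous dependence of $u_0\mapsto u$ from $H^{1/2}(\mathbb T^2)$ into $X^{1/2}([-T,T])$ follows by combining the global bound just obtained with the stability lemma applied to Cauchy sequences of data.
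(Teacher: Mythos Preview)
Your Steps 1--3 align well with the paper's approach. The genuine gap is in Step 4, specifically in how you propose to handle the scale-one (torus) profile.

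You write that after eliminating Euclidean profiles via the $\mathbb R^2$ black box, the decomposition collapses to ``a single torus profile, i.e.\ an almost-periodic critical element,'' which you then propose to rule out by ``a rigidity argument tailored to the torus, using the conservation of mass and momentum together with the almost-periodicity.'' This will not work. On the compact torus there is no meaningful ``almost-periodic modulo symmetries'' reduction (translations do not escape to infinity), there is no useful Morawetz-type monotonicity, and mass and momentum do not control the $H^{1/2}$ norm. You have no mechanism to preclude a fixed torus solution with bounded $H^{1/2}$ but infinite $Z$-norm.

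The paper avoids this obstacle entirely by a different setup of the induction quantity: instead of your $E_c$, it defines $\Lambda(L,\tau)$ as the supremal $Z$-norm over solutions with $\sup_t\|u(t)\|_{H^{1/2}}^2 < L$ on intervals of length $\leq \tau$, and then sets $\Lambda_0(L) = \lim_{\tau\to 0}\Lambda(L,\tau)$. The minimizing sequence thus lives on intervals $I_k$ with $|I_k|\to 0$. The payoff is that the scale-one profile $g$ is then harmless: since $g\in H^{1/2}(\mathbb T^2)$ is fixed, $\|e^{it\Delta}g\|_{Z(I_k)}\to 0$ as $|I_k|\to 0$, and the local theory (your Step 1) immediately gives a bounded $Z$-norm, contradicting blow-up. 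No separate torus rigidity is needed. This shrinking-interval trick is the missing idea in your outline; without it, Step 4 as written does not close.
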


In this paper, we prove the large data global wellposedness result of defocusing $\dot{H}^{\frac{1}{2}}$-critical NLS on the both rational and irrational tori in two dimensions. To the authors' best knowledge, this is the first global wellposedness result on compact manifolds in the critical settings at a level in which conservation laws are not available.
We remark that our proof does not differentiate rational and irrational tori, because the Strichartz type estimates (Lemma \ref{lem Strichartz1}) by Bourgain-Demeter \cite{BD} that we use in this paper work both for rational or irrational tori.

\subsection{Outline of the proof}

In our proof, we follow the Kenig-Merle's road map \cite{KM1} and  the `black-box' strategy \cite{IP1, IP2, IPS}, and we also rely on atomic spaces introduced by \cite{HHK, HTT1, HTT2}. Now let us summarize below the main steps and ingredients in the proof of Theorem \ref{thm Main}. 

\subsubsection*{\bf Step 1}
A local well-posedness argument and a stability theory of \eqref{NLS} in $\T^2$. 

The key ingredients in this step are:
\vspace{-0.3cm}
\begin{itemize}
\item[$-$]
the construction of suitable solution spaces $X^s$ and $Y^s$ (Definition \ref{defn X^s Y^s})
\item[$-$]
a weaker critical solution norm $Z$-norm, which plays a similar role as the $L^{10}_{t,x}$-norm in \cite{CKSTT2} (Definition \ref{defn Z}),
\item[$-$]
a trilinear estimate (Proposition \ref{prop Loc Trilinear}, Lemma \ref{lem Trilinear} and Remark \ref{rmk Trilinear}).
\end{itemize}
As we mentioned above, due to a logarithmic loss in Strichartz estimates, it is not enough to close the fixed point argument in the critical local theory. Hence a trilinear estimate is needed. First, we adapt Herr-Tataru-Tzvetkov's idea \cite{HTT1, HTT2} and introduce the adapted critical spaces $X^s$ and $Y^s$, which are frequency localized modification of atomic spaces $U^p$ and $V^p$, as our solution spaces and nonlinear spaces. Then we define a weaker critical norm $Z$  (see Definition \ref{defn Z}) of the solution, which plays a similar role as the $L^{10}_{t,x}$-norm in \cite{CKSTT2}. Next we prove a trilinear estimate for frequency-localized functions in the atomic spaces defined above, which greatly makes up for the logarithmic loss in Strichartz estimates and hence helps to close the fixed point argument in our local theory (Proposition \ref{prop LWP}). 
Moreover, using this $Z$-norm based local theory, we also obtain a stability theory (Proposition \ref{prop Stability}) and a fact that the boundedness of $Z$-norms implies the well-posedness of the solutions.

\subsubsection*{\bf Step 2} 

Preparation for the rigidity argument. 

The key ingredients in this step are:
\vspace{-0.3cm}
\begin{itemize}
\item[$-$]
understanding the behaviors of Euclidean profiles (Section \ref{sec Euclidean profiles}),
\item[$-$]
linear and nonlinear profile decompositions of the minimal blow-up solutions (Section \ref{sec Profile decomp}).
\end{itemize}
An important step  in the `black-box' trick is to  decompose the minimal blow-up solutions into two types of profiles: Euclidean profiles and scale-one profiles. Here the Euclidean profiles are those that concentrate in  space and time, while the scale-one profiles are those that spread out more in space. Moreover the corresponding nonlinear Euclidean profiles (which are obtained by running the Euclidean profiles along the NLS flow as initial data) are Euclidean-like solutions, which can be viewed as solutions in the Euclidean space $\R^2$. And the interaction between these two types of profiles can be showed to be very weak, hence almost orthogonal.

\subsubsection*{\bf Step 3}

Rigidity argument. 

In this step, we prove the main theorem (Theorem \ref{thm Main}) by contradiction, following the road map set up in \cite{KM3}. We assume that $Z$-norms are not bounded for all solutions. Notice in {\bf Step 1}, we proved that at least for the solutions with small data in $H^{\frac{1}{2}} (\T^2)$, their $Z$-norms are bounded globally. Combining the assumption with {\bf Step 1}, we are able to find the minimal blow-up solutions and decompose them into Euclidean profiles and scale-one profiles as we discussed above. With understanding of their behaviors, we can preclude the existence of such minimal blow-up solutions and hence complete the proof of Theorem \ref{thm Main}. More precisely, by contradiction argument, we construct a sequence of initial data which implies a sequence of solutions and leads the $Z$-norm towards infinity. Then following the profile decomposition idea, we perform a linear profile decomposition of the sequence of initial data with a scale-one profile and a series of Euclidean profiles that concentrate at space-time points.  We get nonlinear profiles by running the linear profiles along the NLS flow as initial data. By the contradiction condition, the scattering properties of nonlinear Euclidean profiles and the defect of interaction between different profiles show that there is actually at most one profile (which is the Euclidean profile). Moreover its corresponding nonlinear Euclidean profile is just the Euclidean-like solution, which can be interpreted in some sense as solutions in the Euclidean space $\R^2$. However, these kind of concentration as a Euclidean-like solution is prevented by the global well-posedness theory on the Euclidean space $\R^2$ in \cite{Yu}.

\subsection{Organization of the paper}

In Section \ref{sec Preliminaries}, we introduce the notations and the functional spaces with their properties that we will need in this paper. In Section \ref{sec LWP}, we prove a trilinear estimate and based on which we present a local well-posedness result and a stability argument. In Section \ref{sec Euclidean profiles}, we study the behavior of Euclidean-like solutions to the linear and nonlinear equation concentrating to a point in space and time. In Section \ref{sec Profile decomp}, we prove a profile decomposition that will be used in Section \ref{sec Rigidity}. Finally in Section \ref{sec Rigidity}, we prove the main theorem (Theorem \ref{thm Main}).

\subsection*{Acknowledgements}
X.Y. is funded in part by the Jarve Seed Fund and an AMS-Simons travel grant. Both authors would like to thank Gigliola Staffilani for suggesting this problem.

\section{Preliminaries}\label{sec Preliminaries}
In this section, we  discuss  notations used in the rest of the paper, and introduce the function spaces (atomic spaces) that we will be working with.

\subsection{Notations}
We define
\begin{align*}
\norm{f}_{L_t^q L_x^r (I \times \M)} : = \square{\int_I \parenthese{\int_{\M} \abs{f(t,x)}^r \, dx}^{\frac{q}{r}} dt}^{\frac{1}{q}},
\end{align*}
where $I$ is a time interval.

For $x\in \R$, we set $\inner{x} = (1 + \abs{x}^2)^{\frac{1}{2}}$. We adopt the usual notation that $A \lesssim  B$ or $B \gtrsim A$ to denote an estimate of the form $A \leq C B$ , for some constant $0 < C < \infty$ depending only on the {\it a priori} fixed constants of the problem. We write $A \sim B$ when both $A \lesssim  B $ and $B \lesssim A$.

\subsection{Definition of solutions}

\begin{defn}[Solutions]
Given an interval $I \subset \R$, we call $u \in C (I : H^{\frac{1}{2}} (\T^2))$ a strong solution of \eqref{NLS} if $u \in X^{\frac{1}{2}} (I)$ and $u$ satisfies that for all $t,s \in I$,
\begin{align*}
u(t) = e^{i(t-s)\Delta} u(s) - i \int_s^t e^{i(t-t' ) \Delta} (\abs{u}^4 u ) (t'  ) \, dt' .
\end{align*} 
\end{defn}

\subsection{Atomic spaces}
In this subsection, we recall some basic definitions and properties of atomic spaces.

The atomic spaces were first introduced in partial differential equations in \cite{KT1}, and then applied to KP-II equations in \cite{HHK} and nonlinear Schr\"odinger equations in \cite{KT2, KT3, HTT1, HTT2}. They are useful in many different settings, and it is worth mentioning that they are quite helpful in fixing the lack of Strichartz endpoint issues. 

First, we recall some basic definitions and properties of the atomic spaces. 
\begin{defn}[$U_{\Delta}^p$-spaces, Definitions 2.1, 2.15 in \cite{HHK}]
Let $1 \leq p  < \infty$. Let $U_{\Delta}^p$ be an atomic space whose atoms are piecewise solutions to the linear equation. $u_{\lambda}$ is a $U_{\Delta}^p$-atom if there exists an increasing sequence $\{ t_k\}_{k=1}^N$, $N$ may be finite or infinite, $t_0 =-\infty$, $N$ is finite, $t_{N+1} = + \infty$, and
\begin{align*}
u_{\lambda} = \sum_{k=0}^N \mathbf{1}_{[t_k , t_{k+1})} e^{it\Delta} u_k,  \quad \text{ and }  \quad \sum_k \norm{u_k}_{L^2}^p =1. 
\end{align*}
If $J \subset \R$ is an interval, then we say that $u_{\lambda}$ is a $U_{\Delta}^p (J)$-atom if $t_k \in J$ for all $1 \leq k \leq N$. Then for any $1 \leq p < \infty$, let 
\begin{align*}
\norm{u}_{U_{\Delta}^p (J \times \R^2)} = \inf \bracket{ \sum_{\lambda} \abs{c_{\lambda}} : u= \sum_{\lambda} c_{\lambda} u_{\lambda}, u_{\lambda} \text{ are }  U_{\Delta}^p(J)-\text{atoms}} .
\end{align*} 
\end{defn}

For any $1 \leq p < \infty$, $U_{\Delta}^p (J \times \R^2) \subset L^{\infty} L^2 (J \times \R^2)$. Additionally, $U_{\Delta}^p$-functions are continuous except at countably many points and right-continuous everywhere.

\begin{defn}[$V_{\Delta}^p$-spaces, Definitions 2.3, 2.15 in \cite{HHK}]
Let $1 \leq p < \infty$. Then $V_{\Delta}^p$ is the space of functions $u \in L^{\infty} (L^2)$ such that 
\begin{align*}
\norm{v}_{V_{\Delta}^p}^p = \norm{v}_{L_t^{\infty} L_x^2 }^p + \sup_{\{ t_k \} \nearrow} \sum_k \norm{e^{-it_k} v(t_k) - e^{-it_{k+1} \Delta} v(t_{k+1})}_{L_x^2}^p.
\end{align*}
The supremum is taken over increasing sequences $t_k$. If $J \subset \R$, then
\begin{align*}
\norm{v}_{V_{\Delta}^p (J \times \R^2)}^p = \norm{v}_{L_t^{\infty} L_x^2  (J \times \R^2)}^p + \sup_{\{ t_k \}\nearrow} \sum_k \norm{e^{-it_k} v(t_k) - e^{-it_{k+1} \Delta} v(t_{k+1})}_{L_x^2}^p.
\end{align*}
where each $t_k$ lies in $J$. Note that $\{ t_k \}$ may be a finite or infinite sequence.
\end{defn}

\begin{prop}[Embedding, Proposition 2.2, Proposition 2.4 and Corollary 2.6 in \cite{HHK}]\label{prop Embed}
For $1\leq p \leq q < \infty$,
\begin{align*}
U_{\Delta}^p(\R, L^2) \hookrightarrow V_{\Delta}^p (\R, L^2) \hookrightarrow U_{\Delta}^q(\R, L^2) \hookrightarrow  L^{\infty}(\R,L^2) .
\end{align*}
\end{prop}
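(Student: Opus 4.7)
The plan is to establish each of the three embeddings separately. The two outer ones, namely $U^p \hookrightarrow V^p$ and $U^q \hookrightarrow L^\infty L^2$, follow essentially from the atomic definition via a reduction to a single atom. The middle embedding $V^p \hookrightarrow U^q$ is the main obstacle: it requires constructing an explicit atomic decomposition of an arbitrary $V^p$-function, and it is where the gap between $p$ and $q$ plays its role.

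For the endpoint embedding $U^p \hookrightarrow L^\infty L^2$ (and likewise $U^q \hookrightarrow L^\infty L^2$), I would observe that any $U^p$-atom $u_\lambda = \sum_{k=0}^N \mathbf{1}_{[t_k, t_{k+1})} e^{it\Delta} u_k$ with $\sum_k \|u_k\|_{L^2}^p = 1$ satisfies $\|u_\lambda(t)\|_{L^2} = \|u_{k(t)}\|_{L^2} \leq 1$ for every $t$, hence $\|u_\lambda\|_{L^\infty L^2} \leq 1$; since any $u$ admits an atomic representation with total mass arbitrarily close to $\|u\|_{U^p}$, the embedding follows by the triangle inequality. For $U^p \hookrightarrow V^p$, reduce again to a single atom $u_\lambda$: the $L^\infty L^2$ part was just handled, and for the variation part, given any increasing sequence $\{s_j\}$, refine it by inserting the atomic partition $\{t_k\}$. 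Because $e^{-it\Delta} u_\lambda(t) \equiv u_k$ on each slab $[t_k, t_{k+1})$, each increment $e^{-is_j\Delta} u_\lambda(s_j) - e^{-is_{j+1}\Delta} u_\lambda(s_{j+1})$ is either zero or equal to $u_{k_j} - u_{k_{j+1}}$ for strictly increasing indices. Applying $|a - b|^p \leq 2^{p-1}(|a|^p + |b|^p)$ and noting that each index appears in at most two consecutive differences, one obtains $\sum_j \|u_{k_j} - u_{k_{j+1}}\|_{L^2}^p \leq 2^p$, so the $V^p$-norm of an atom is uniformly bounded and the embedding follows.

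The hard step is $V^p \hookrightarrow U^q$ in the regime $p < q$. My approach would be a stopping-time (dyadic greedy) decomposition of the auxiliary function $w(t) := e^{-it\Delta} v(t)$, which inherits a uniform $p$-variation bound from $v$. For each integer $n \geq 0$, define stopping times $\tau_0^n < \tau_1^n < \cdots$ inductively by letting $\tau_{k+1}^n$ be the first time after $\tau_k^n$ at which $\|w(\cdot) - w(\tau_k^n)\|_{L^2}$ first reaches the threshold $2^{-n}$, and set $w_n$ to be the right-continuous step function equal to $w(\tau_k^n)$ on each $[\tau_k^n, \tau_{k+1}^n)$. Then $e^{it\Delta}(w_{n+1} - w_n)$ is, up to a scalar of size $\lesssim 2^{-n}$, a $U^q$-atom adapted to the refined partition $\{\tau_k^{n+1}\}$, while the number of jumps at scale $n$ is controlled by $\|v\|_{V^p}^p \cdot 2^{np}$. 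Summing the telescoping series $v = e^{it\Delta}(w_0) + \sum_n e^{it\Delta}(w_{n+1} - w_n)$ and keeping track of atomic weights, the total $U^q$-norm is bounded by $\sum_n 2^{-n} \cdot (\text{number of atoms at scale } n)^{1/q}$, which converges precisely because $p < q$ via a Hölder estimate across the dyadic scales. This yields the desired atomic representation of $v$ and hence the embedding $V^p \hookrightarrow U^q$, closing the chain since the final embedding $U^q \hookrightarrow L^\infty L^2$ is already subsumed by the endpoint analysis above.
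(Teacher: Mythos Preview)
The paper does not prove this proposition; it is quoted directly from Hadac--Herr--Koch \cite{HHK} (their Propositions~2.2, 2.4 and Corollary~2.6), so there is no in-paper argument to compare against. Your outline is exactly the standard proof from that reference: atom-by-atom reduction for $U^p_\Delta \hookrightarrow V^p_\Delta$ and $U^q_\Delta \hookrightarrow L^\infty L^2$, and the dyadic stopping-time (greedy) decomposition for $V^p_\Delta \hookrightarrow U^q_\Delta$.

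Two minor remarks. First, the middle embedding genuinely needs the \emph{strict} inequality $p<q$: your series $\sum_n 2^{-n(1-p/q)}$ diverges when $p=q$, and indeed $V^p_\Delta \hookrightarrow U^p_\Delta$ is false in general. The hypothesis ``$p\le q$'' in the displayed statement should therefore be read as $p<q$ for the full chain; you already flagged that the gap is essential. Second, for the telescoping identity $v = e^{it\Delta} w_0 + \sum_n e^{it\Delta}(w_{n+1}-w_n)$ to recover $v$ one needs $w(t)=e^{-it\Delta}v(t)$ to be right-continuous so that $w_n \to w$ pointwise; this is built into the $V^p_{rc}$ convention in \cite{HHK} and is implicit here. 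With those caveats your sketch is correct and matches the cited source.
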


\begin{prop}[Duality, Theorem 2.8 in \cite{HHK}]\label{prop Duality}
Let $DU_\Delta^p$ be the space of functions
\begin{align*}
DU_\Delta^p = \bracket{(i\partial_t + \Delta) u : u \in U_\Delta^p },
\end{align*}
and the $DU_\Delta^p = (V_\Delta^{p' })^{\ast}$, with $\frac{1}{p} + \frac{1}{p' } = 1$. Then $(0 \in J )$
\begin{align*}
\norm{\int_0^t e^{i(t-t' )\Delta}F(u)(t' )\, dt' }_{U_\Delta^p (J\times \R^d)} \lesssim \sup\bracket{\int_J \inner{ v, F}  \,  dt: \norm{v}_{V_\Delta^{p' } }=1 }.
\end{align*}
\end{prop}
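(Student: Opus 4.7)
The plan is to first establish the abstract duality $(U^p)^* \cong V^{p'}$ on the level of scalar atomic spaces (those built from arbitrary step functions, without the propagator $e^{it\DD}$), then conjugate by $e^{it\DD}$ to obtain $(V_\DD^{p'})^* \cong DU_\DD^p$, and finally read off the inequality by viewing the Duhamel integral as the primitive of $-iF$ in the propagator-twisted frame.

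For the abstract duality, I would define the natural pairing between a $U^p$-atom $a = \sum_k \mathbf{1}_{[t_k, t_{k+1})} u_k$ (with $\sum_k \norm{u_k}_{L^2}^p = 1$) and a function $v \in V^{p'}$ via the Riemann--Stieltjes-type sum
\begin{align*}
B(a, v) \;=\; \sum_k \inner{u_k,\, v(t_{k+1}) - v(t_k)}.
\end{align*}
Hölder applied to the Hilbert-valued sequences $\{u_k\}$ and $\{v(t_{k+1}) - v(t_k)\}$ yields $\abs{B(a,v)} \leq \norm{a}_{U^p} \norm{v}_{V^{p'}}$, so $v \mapsto B(\cdot, v)$ embeds $V^{p'}$ into $(U^p)^*$. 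For the reverse inclusion, given $\ell \in (U^p)^*$, I would construct the representing function $v$ by applying the Riesz representation theorem at each time level---setting $\inner{v(t), \phi} := \ell(\mathbf{1}_{[0,t)} \phi)$ for $\phi \in L^2$---and verifying that the $p'$-variation of the resulting $v$ is controlled by the operator norm of $\ell$. Passage to the twisted spaces $U_\DD^p, V_\DD^p$ is then automatic by composing with $e^{-it\DD}$, which is an $L^2$-isometry.

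For the inequality, let $u(t) = \int_0^t e^{i(t-t')\DD} F(t') \, dt'$, so that $\wt u := e^{-it\DD} u$ satisfies $\partial_t \wt u = -i\, e^{-it\DD} F$ with $\wt u(0) = 0$. By the abstract duality, $\norm{\wt u}_{U^p} = \sup \bracket{\abs{B(\wt u, w)} : \norm{w}_{V^{p'}} = 1}$, and a partial-summation/integration-by-parts argument---justified by partitioning along the jump times of $\wt u$ and passing to the limit---gives
\begin{align*}
B(\wt u, w) \;=\; -i\int_J \inner{e^{-it\DD} F,\, w} \, dt.
\end{align*}
Setting $v := e^{it\DD} w$ preserves the $V^{p'}$-norm in the twisted frame, and unitarity of $e^{it\DD}$ converts the pairing into $-i \int_J \inner{F, v} \, dt$; taking the supremum over $\norm{v}_{V_\DD^{p'}} = 1$ yields the claimed inequality, the constant $-i$ being absorbed into the $\lesssim$-constant.

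The main obstacle is making the Riemann--Stieltjes pairing and the integration-by-parts step rigorous: $U^p$-functions are only step-function limits with potentially countably many jumps, and $V^{p'}$-functions carry only bounded $p'$-variation (not absolute continuity), so one must carefully partition along jump times, invoke the right-continuity convention, and pass to the limit using the uniform $p'$-variation control. The Riesz-representation step in the reverse direction of the duality also demands piecing together the pointwise values of $v$ consistently across time while keeping track of the variation norm.
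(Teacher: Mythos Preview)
The paper does not prove this proposition; it is quoted as Theorem 2.8 of \cite{HHK} and used as a black box, so there is no paper-side argument to compare against. Your outline is precisely the argument given in \cite{HHK}: establish $(U^p)^* \cong V^{p'}$ via the Stieltjes-type pairing, transfer to the twisted spaces by conjugating with $e^{-it\DD}$, and read off the Duhamel bound by integration by parts in the twisted frame.

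One small caveat worth flagging: with the pairing you wrote, $B(a,v) = \sum_k \inner{u_k,\, v(t_{k+1}) - v(t_k)}$, summation by parts leaves a boundary contribution $\inner{\tilde u(\text{end}),\, v(\text{end})}$ which must be disposed of by the convention that $v \in V^{p'}$ vanishes at the right endpoint (or by working in $V^{p'}_{rc}$ with this normalization). The formulation in \cite{HHK} uses the dual Stieltjes form $\sum_k \inner{a(t_k) - a(t_{k-1}),\, v(t_{k-1})}$, which for absolutely continuous $\tilde u$ with $\tilde u(0)=0$ gives $\int \inner{\tilde u',\, v}\,dt$ directly. With either convention handled, your argument goes through and in fact yields equality, not merely $\lesssim$. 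Note also that the paper's phrasing ``$DU_\DD^p = (V_\DD^{p'})^*$'' is not literally the same as $(U_\DD^p)^* = V_\DD^{p'}$ absent reflexivity, but it is the latter identity---the one you argue for---that produces the norm formula $\norm{u}_{U^p} = \sup_{\norm{v}_{V^{p'}}=1} |B(u,v)|$ actually needed here.
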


\begin{prop}[Transfer Principle, Proposition 2.19 in \cite{HHK}]\label{prop TransferPrinciple}
Let 
\begin{align*}
T_0 : L^2 \times \dots \times L^2 \to L_{loc}^{1}
\end{align*}
be an m-linear operator. Assume that for some $1\leq p,\ q \leq \infty$
\begin{align*}
\norm{ T_0 (e^{it\Delta} \phi_1,\dots, e^{it\Delta} \phi_m)}_{L^p(\R, L_x^q)} \lesssim \prod_{i=1}^m\norm{\phi_i}_{L^2}.
\end{align*}
Then, there exists an extension $T : U_{\Delta}^p \times \dots \times U_{\Delta}^p \to L^p(\R, L_x^q)$ satisfying
\begin{align*}
\norm{T(u_1, \dots, u_m)}_{L^p(\R, L_x^q)} \lesssim \prod_{i=1}^m \norm{u_i}_{U_{\Delta}^p},
\end{align*}
and  such that $T(u_1, \dots , u_m) (t, \cdot) = T_0 (u_1(t), \dots , u_m(t))(\cdot)$, a.e.
\end{prop}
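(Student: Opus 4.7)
The plan is to reduce the bound for general $U_\Delta^p$-functions to the case of atoms by linearity, and then for atoms to reduce further to the hypothesis using the disjointness of the interval pieces together with the defining $\ell^p$-normalization of atoms. First I would observe that if $u_i$ has an atomic decomposition $u_i = \sum_{\lambda_i} c_{\lambda_i}^{(i)} u_{\lambda_i}^{(i)}$ with $\sum_{\lambda_i}|c_{\lambda_i}^{(i)}| \le (1+\varepsilon)\|u_i\|_{U_\Delta^p}$, then $m$-linearity suggests defining
\begin{equation*}
T(u_1,\dots,u_m) \;=\; \sum_{\lambda_1,\dots,\lambda_m} c_{\lambda_1}^{(1)}\cdots c_{\lambda_m}^{(m)}\, T\!\left(u_{\lambda_1}^{(1)},\dots,u_{\lambda_m}^{(m)}\right),
\end{equation*}
so the task reduces to proving a uniform bound of $T$ evaluated on $m$-tuples of atoms.

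For atoms $u_{\lambda_i}^{(i)} = \sum_{k_i} \mathbf{1}_{J_{k_i}^{(i)}} e^{it\Delta}\phi_{k_i}^{(i)}$ with $\sum_{k_i}\|\phi_{k_i}^{(i)}\|_{L^2}^p = 1$, the key observation is that, pointwise in $t$,
\begin{equation*}
T_0\!\left(u_{\lambda_1}^{(1)},\dots,u_{\lambda_m}^{(m)}\right)(t)
\;=\; \sum_{k_1,\dots,k_m} \mathbf{1}_{J_{k_1}^{(1)}\cap\cdots\cap J_{k_m}^{(m)}}(t)\, T_0\!\left(e^{it\Delta}\phi_{k_1}^{(1)},\dots,e^{it\Delta}\phi_{k_m}^{(m)}\right)(t),
\end{equation*}
and the sets $J_{k_1}^{(1)}\cap\cdots\cap J_{k_m}^{(m)}$ are pairwise disjoint across distinct $(k_1,\dots,k_m)$. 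Raising to the $p$-th power in time, using this disjointness, applying the hypothesis on each non-empty piece (after dropping the indicator, which can only decrease the $L^p_t L^q_x$ norm), and then factoring the resulting product sum,
\begin{equation*}
\left\|T_0\!\left(u_{\lambda_1}^{(1)},\dots,u_{\lambda_m}^{(m)}\right)\right\|_{L^p_t L^q_x}^p
\;\lesssim\; \sum_{k_1,\dots,k_m} \prod_{i=1}^{m} \|\phi_{k_i}^{(i)}\|_{L^2}^p
\;=\; \prod_{i=1}^{m}\Big(\sum_{k_i}\|\phi_{k_i}^{(i)}\|_{L^2}^p\Big)
\;=\; 1.
\end{equation*}
Thus $T$ is bounded by an absolute constant on $m$-tuples of atoms.

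Plugging this back into the atomic decomposition and sending $\varepsilon\to 0$ yields the desired $U_\Delta^p$-bound. To finish, I would verify that the definition of $T$ is independent of the chosen atomic representatives (two decompositions of the same $u_i$ differ by a null sum, whose image under the multilinear map we have already controlled in $L^p_t L^q_x$), and check the a.e.\ identification $T(u_1,\dots,u_m)(t,\cdot)=T_0(u_1(t),\dots,u_m(t))(\cdot)$, which follows by construction on atoms and by approximation in general. The main technical nuisance, more than an obstacle, is the bookkeeping for $m\ge 2$: one must be careful that the disjointness argument uses the intersection partition across \emph{all} $m$ atomic partitions simultaneously, so that $\ell^p$-super-additivity in time turns into a clean product over the $m$ factors; once this is set up correctly, the hypothesis on $T_0$ applied to pure linear solutions does the rest.
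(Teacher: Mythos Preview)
Your argument is correct and is exactly the standard proof of the transfer principle: reduce to atoms by multilinearity and absolute summability, then exploit the pairwise disjointness of the intersected interval partitions to turn the $L^p_t$ norm into an $\ell^p$ sum and apply the hypothesis on each piece. The paper does not give its own proof of this proposition; it merely cites Proposition~2.19 of \cite{HHK}, whose proof is precisely the one you outlined.
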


\begin{defn}[$X^s$ and $Y^s$ spaces]\label{defn X^s Y^s}
We define $X^s ([0,1])$ and $Y^s ([0,1])$ to be the spaces of all functions $u: [0,1] \to H^s (\T^d)$ such that for every $\xi \in \Z^d$ the map $ t \to \widehat{e^{-it\Delta} u(t)} (\xi)$ is in $U^2([0,1] ; \C)$ and $V_{rc} ([0,1]; \C)$, respectively, with norms given by
\begin{align*}
\norm{u}_{X^s ([0,1])} : = \parenthese{\sum_{\xi \in \Z^d} \inner{\xi}^{2s} \norm{\widehat{e^{-it\Delta}u(t)}(\xi)}_{U^2}^2}^{\frac{1}{2}},\\
\norm{u}_{Y^s ([0,1])} : = \parenthese{\sum_{\xi \in \Z^d} \inner{\xi}^{2s} \norm{\widehat{e^{-it\Delta}u(t)}(\xi)}_{V^2}^2}^{\frac{1}{2}}.
\end{align*}

For intervals $I \subset \R$, we define $X^s (I)$, $s \in \R$, in the usual way as restriction norms; thus
\begin{align*}
X^s(I) : = \{ u \in C(I : H^s) : \norm{u}_{X^s (I)} : = \sup_{J \subset I, \abs{J} \leq 1}  \inf_{v|_J (t) = u|_J (t)} \norm{v}_{\widetilde{X}^s} < \infty\} .
\end{align*}

The spaces $Y^s(I)$ are defined in a similar way.

\begin{prop}\label{prop Duality X}
Let $s \geq 0$ and $T >0$. For $f \in L^1 ([0,1] ; H^s (\T^2))$, we have
\begin{align*}
\int_0^t e^{t(t-s) \Delta} f (s) \, ds \in X^s ([0,1]),
\end{align*}
and
\begin{align*}
\norm{\int_0^t e^{t(t-s) \Delta} f (s) \, ds}_{X^s ([0,1])} \leq \sup_{v \in Y^{-s} ([0,1]) : \norm{v}_{Y^{-s}} =1} \abs{ \int_0^T \int_{\T^2} f(t,x) \overline{v(t,x)} \, dx dt }
\end{align*}
\end{prop}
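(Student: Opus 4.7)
The plan is to combine the scalar $U^2$-$V^2$ duality (Proposition \ref{prop Duality}) applied fiber-by-fiber in the Fourier variable $\xi$ with the standard duality between the weighted $\ell^2(\Z^2; \langle\xi\rangle^{2s})$ and $\ell^2(\Z^2; \langle\xi\rangle^{-2s})$ spaces, and finally to use Parseval on $\T^2$ to re-express the scalar pairing as a spacetime pairing of $f$ against a function in $Y^{-s}$.

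First I would unpack the definitions. Writing $w_\xi(t) := \widehat{e^{-it\Delta}u(t)}(\xi)$ for $u(t) = \int_0^t e^{i(t-s)\Delta}f(s)\,ds$, a direct Fourier computation gives
\begin{align*}
w_\xi(t) = \int_0^t g_\xi(s)\,ds, \qquad g_\xi(s) := \widehat{e^{-is\Delta}f(s)}(\xi),
\end{align*}
so $w_\xi(0)=0$ and $g_\xi$ is (in the scalar $\C$-valued sense) the ``nonlinearity'' driving $w_\xi$. In particular $g_\xi \in L^1([0,1])$ by the hypothesis $f \in L^1([0,1]; H^s)$, so each $w_\xi$ is absolutely continuous on $[0,1]$ and the scalar atomic theory applies.

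Next, for each fixed $\xi$, Proposition \ref{prop Duality} (in its scalar form, with the trivial Laplacian so that $U^2 = U^2_\Delta$ reduces to the standard atomic space on an interval) yields
\begin{align*}
\|w_\xi\|_{U^2([0,1])} \;\leq\; \sup_{\|\phi_\xi\|_{V^2([0,1])}\leq 1}\;\left|\int_0^1 g_\xi(t)\,\overline{\phi_\xi(t)}\,dt\right|.
\end{align*}
Given $\varepsilon>0$, I would fix (measurably in $\xi$, after truncating to $|\xi|\leq N$ and passing to the limit $N\to\infty$) a near-optimizer $\phi_\xi$ so that the supremum above is essentially attained. Then I would set
\begin{align*}
\alpha_\xi \;:=\; \frac{\langle\xi\rangle^{2s}\,\|w_\xi\|_{U^2}}{\|u\|_{X^s([0,1])}},
\end{align*}
and define the test function $v$ on $[0,1]\times\T^2$ via its Fourier coefficients by requiring $\widehat{e^{-it\Delta}v(t)}(\xi) = \alpha_\xi\,\phi_\xi(t)$. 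By the definition of $Y^{-s}$ and the choice of $\alpha_\xi$,
\begin{align*}
\|v\|_{Y^{-s}([0,1])}^2 \;=\; \sum_\xi \langle\xi\rangle^{-2s}\,|\alpha_\xi|^2\,\|\phi_\xi\|_{V^2}^2 \;\leq\; \frac{1}{\|u\|_{X^s}^2}\sum_\xi \langle\xi\rangle^{2s}\,\|w_\xi\|_{U^2}^2 \;=\; 1.
\end{align*}

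Finally, Parseval on $\T^2$ together with the cancellation $\hat f(t,\xi)\,\overline{\hat v(t,\xi)} = g_\xi(t)\,\overline{\alpha_\xi \phi_\xi(t)}$ (the $e^{\pm it|\xi|^2}$ factors cancel) gives
\begin{align*}
\left|\int_0^1\!\int_{\T^2} f(t,x)\,\overline{v(t,x)}\,dx\,dt\right| \;=\; \left|\sum_\xi \overline{\alpha_\xi}\int_0^1 g_\xi(t)\,\overline{\phi_\xi(t)}\,dt\right| \;\geq\; (1-\varepsilon)\,\|u\|_{X^s([0,1])},
\end{align*}
where the last step uses the choice of $\alpha_\xi$ and the near-optimality of $\phi_\xi$. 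Sending $\varepsilon\to 0$ yields the claimed bound; since the right-hand side is finite by hypothesis, this also proves $u \in X^s([0,1])$.

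The main technical obstacle is making the fiberwise selection of $\phi_\xi$ genuinely produce an element of $Y^{-s}$: one must ensure measurability/summability while extracting near-optimizers of the $U^2$-$V^2$ duality. I would handle this by first truncating to a finite frequency ball $|\xi|\leq N$, for which only finitely many $\phi_\xi$ need to be chosen and the construction of $v$ is unambiguous, and then passing to the limit $N\to\infty$ using monotone convergence of the $X^s$ partial sums. The rest of the argument is bookkeeping with the definitions of $X^s$, $Y^{-s}$, and the Plancherel identity on $\T^2$.
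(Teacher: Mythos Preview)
The paper does not actually prove Proposition~\ref{prop Duality X}: it is stated in the preliminaries as part of the background on atomic spaces, with the implicit reference being the $U^p$--$V^p$ framework of \cite{HHK} and \cite{HTT1} (see in particular Proposition~2.11 in \cite{HTT1}). So there is no ``paper's own proof'' to compare against.

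That said, your argument is correct and is precisely the standard proof of this fact. The three ingredients you identify---fiberwise scalar $U^2$--$V^2$ duality (the case $p=2$ of Proposition~\ref{prop Duality}, or Theorem~2.8 in \cite{HHK}), the $\ell^2(\langle\xi\rangle^{2s})$--$\ell^2(\langle\xi\rangle^{-2s})$ duality to assemble the frequencies, and Parseval on $\T^2$ to rewrite the pairing in physical space---are exactly what is used in \cite{HTT1}. Two minor remarks: first, when you pick the near-optimizer $\phi_\xi$ you should also rotate it by a unimodular constant so that $\int_0^1 g_\xi\,\overline{\phi_\xi}\,dt$ is real and nonnegative, otherwise the sum $\sum_\xi \overline{\alpha_\xi}\int g_\xi\overline{\phi_\xi}$ need not have modulus bounded below by the sum of moduli; second, your finite-truncation workaround for the measurable selection issue is adequate here since $\xi$ ranges over the countable set $\Z^2$, so no genuine measurable-selection theorem is needed---one can simply choose the $\phi_\xi$ one frequency at a time.
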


The norm controlling the inhomogeneous term on an interval $I =(a,b)$ is then defined as
\begin{align*}
\norm{h}_{N(I)} : = \norm{\int_a^t e^{i(t-s) \Delta} h(s) \, ds}_{X^{\frac{1}{2}} (I)}.
\end{align*}
\end{defn}

We also need the following weaker critical norm. 
\begin{defn}[$Z$-norm]\label{defn Z}
\begin{align*}
\norm{u}_{Z(I)} : = \sum_{p \in \{ p_0, p_1\} }\sup_{J \subset I , \abs{J} \leq 1} \parenthese{ \sum_N N^{4 -\frac{p}{2}} \norm{P_N u(t)}_{L_{t,x}^p (J \times \T^2)}^p}^{\frac{1}{p}} .
\end{align*}
where $p_0 = 4.1$, $p_1 = 82$. Note that $\frac{2}{p_0} + \frac{1}{p_1} = \frac{1}{2}$ and the reason  why we choose these two values of $p$'s in this way is that we will use them in an $L^2$ estimate later in the proof.
\end{defn}

\begin{rmk}\label{rmk Z<X}
Moreover,
\begin{align*}
\norm{u}_{Z(I) } \lesssim \norm{u}_{X^{\frac{1}{2}}(I)},
\end{align*}
thus $Z$ is indeed a weaker norm.  In fact, using Definition \ref{defn Z} and Strichartz estimates (Lemma \ref{lem Strichartz2}) presented in the next section, we see that for $p=p_0, p_1$
\begin{align*}
\sup_{J \subset I , \abs{J} \leq 1} \parenthese{ \sum_N N^{4 -\frac{p}{2}} \norm{P_N u(t)}_{L_{t,x}^p (J \times \T^2)}^p}^{\frac{1}{p}}  \lesssim \sup_{J \subset I , \abs{J} \leq 1} \parenthese{ \sum_N N^{\frac{p}{2} }  \norm{P_N u(t)}_{X^{0} (J \times \T^2)}^p}^{\frac{1}{p}}  \lesssim \norm{u}_{X^{\frac{1}{2}}(I)}.
\end{align*}
\end{rmk}

\begin{rmk}\label{rmk Embed}
We have the continuous embedding 
\begin{align*}
U_{\Delta}^2 H^s \hookrightarrow X^s \hookrightarrow Y^s \hookrightarrow V_{\Delta}^2 H^s .
\end{align*}
We also note that 
\begin{align*}
\norm{u}_{L_t^{\infty} H_x^s ([0,1] \times \T^2)} \lesssim \norm{u}_{X^s ([0,1])}
\end{align*}
and
\begin{align}\label{eq Embed}
\norm{\int_0^t e^{i(t-s) \Delta} F(s) \, ds }_{ X^s ([0,1])} = \norm{F}_{N([0,1])} \lesssim \norm{F}_{L_t^1 H_x^s ([0,1] \times \T^2)}.
\end{align}

\end{rmk}

\section{Local well-posedness and stability}\label{sec LWP}
In this section, we present a local theory  via trilinear estimates and a stability argument. 

Before we work on trilinear estimates, let us first review the Strichartz estimates on $\T^2$. 
\subsection{Strichartz estimates}
Recall the Strichartz estimates obtained by Bourgain \cite{Bour93} and Bourgain-Demeter \cite{BD}.
\begin{lem}[Strichartz estimates \cite{Bour93, BD}]\label{lem Strichartz1}
If $p>4$, then Strichartz estimates on $\T^2$ read
\begin{align*}
\norm{P_N e^{it\Delta} f}_{L_{t,x}^p([0,1] \times \T^2)} \lesssim_p N^{1- \frac{4}{p}} \norm{P_N f}_{L_x^2 (\T^2)}
\end{align*}
and
\begin{align*}
\norm{P_C e^{it\Delta} f}_{L_{t,x}^p([0,1] \times \T^2)} \lesssim_p N^{1 - \frac{4}{p}} \norm{P_N f}_{L_x^2 (\T^2)}
\end{align*}
where $C$ is a cube of side length $N$.
\end{lem}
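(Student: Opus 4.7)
The plan is to derive both estimates from the $\ell^2$ decoupling theorem for the 2-dimensional paraboloid in $\R^3$, proved by Bourgain-Demeter in \cite{BD}; the critical decoupling exponent there is $p_c = 4$, and for $p > 4$ one recovers Strichartz bounds matching the sharp scaling $N^{1-4/p}$.

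First, I would reduce the annular estimate to the cube estimate. The annulus $\{|\xi| \sim N\} \subset \R^2$ has diameter $\sim N$ and is covered by $O(1)$ axis-aligned cubes of side length $N$, so decomposing $P_N = \sum_C P_C$ over this finite family and applying the triangle inequality reduces the annular estimate to the cube estimate with only an $O(1)$ loss.

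For the cube estimate, I would use the Galilean invariance of the Schr\"odinger flow to reduce to the case where the cube $C$ is centered at the origin, and then rescale. Writing
\[
P_C e^{it\Delta} f(x) = \sum_{\xi \in C \cap \Z^2} \hat f(\xi)\, e^{i(x\cdot\xi - t|\xi|^2)},
\]
the substitution $\xi = N\eta$, $y = Nx$, $s = N^2 t$ converts this into an estimate for an exponential sum whose space-time Fourier support lies on a unit-scale piece of the paraboloid $\{(\eta, -|\eta|^2)\}$ in $\R^3$. The $\ell^2$ decoupling inequality of \cite{BD}, applied on a ball of radius $R \sim N^2$ with caps of diameter $R^{-1/2} = N^{-1}$, then controls the $L^p$ norm of this sum. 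Since each such cap contains $O(1)$ rescaled lattice points $\eta \in (1/N)\Z^2$, the right-hand side of decoupling collapses to $\|P_C f\|_{L^2_x}$, and undoing the rescaling produces the factor $N^{1 - 4/p}$.

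The main technical issue is the $\epsilon$-loss inherent in decoupling at the critical exponent $p_c = 4$. Since the lemma is stated for $p > 4$ strictly, this is handled by interpolating the $L^4$ decoupling bound against the trivial Bernstein-type bound $\|P_N e^{it\Delta} f\|_{L^\infty_{t,x}} \lesssim N \|P_N f\|_{L^2}$ (from Cauchy-Schwarz on the Fourier side), which absorbs the $\epsilon$-loss for any fixed $p > 4$ and yields the clean scaling $N^{1-4/p}$ with constant depending only on $p$. The full argument is carried out in \cite{BD}, so I would apply that result directly.
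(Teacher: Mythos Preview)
The paper does not prove this lemma; it is stated as a known result with citation to \cite{Bour93, BD}, so there is no ``paper's own proof'' to compare against. Your sketch of how the bound follows from the Bourgain--Demeter $\ell^2$ decoupling theorem is essentially the right outline, and your reductions (annulus to finitely many cubes, Galilean shift to center the cube, rescaling to unit scale) are all standard and correct.

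One point to flag: the $\varepsilon$-removal step as you describe it does not actually work. Interpolating the $L^4$ bound $\lesssim_\varepsilon N^\varepsilon$ against the Bernstein bound $\lesssim N$ at $L^\infty$ gives
\[
\norm{P_N e^{it\Delta} f}_{L^p_{t,x}} \lesssim_{\varepsilon} N^{1-\frac{4}{p} + \frac{4\varepsilon}{p}} \norm{P_N f}_{L^2_x},
\]
and the implicit constant $C_\varepsilon$ blows up as $\varepsilon \to 0$, so you cannot simply ``absorb'' the loss this way. The $\varepsilon$-free estimate for $p>4$ requires a genuinely different argument: on rational tori it was obtained by Bourgain \cite{Bour93} via lattice-point counting, and on general (irrational) tori the $\varepsilon$-removal on top of \cite{BD} is carried out in Killip--Visan \cite{KV4}. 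Since you ultimately defer to the cited references for the full argument, this is consistent with what the paper does, but you should be aware that the interpolation you wrote is not the mechanism.
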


\begin{lem}\label{lem Strichartz2}
Using Proposition \ref{prop TransferPrinciple} and Remark \ref{rmk Embed}, we can write the Strichartz estimates:
\begin{align*}
\norm{P_{\leq N} u}_{L_{t,x}^p ([0,1] \times \T^2)} \lesssim N^{1 -\frac{4}{p}} \norm{P_{\leq N} u}_{U_{\Delta}^p L^2} \lesssim N^{1 - \frac{4}{p}} \norm{P_{\leq N} u}_{Y^0 ([0,1])}
\end{align*}
for any $p > 4$ and $N \geq 1$. 

In particular, due to the Galilean invariance of solutions to the linear Schr\"odinger equation,
\begin{align*}
\norm{P_C u}_{L_{t,x}^p ([0,1] \times \T^2)} \lesssim N^{1 - \frac{4}{p}} \norm{P_C u}_{Y^0 ([0,1])}
\end{align*}
for any $p > 4$ and any cube $C \subset \R^2$ of side-length $N \geq 1$.
\end{lem}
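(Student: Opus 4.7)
The plan is to deduce Lemma \ref{lem Strichartz2} from the linear estimate in Lemma \ref{lem Strichartz1} by combining the Transfer Principle (Proposition \ref{prop TransferPrinciple}) with the embeddings of atomic spaces collected in Proposition \ref{prop Embed} and Remark \ref{rmk Embed}. The argument is essentially a bookkeeping one; there is no new analytic input beyond what is already quoted.

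First I would upgrade the single-frequency-block estimate of Lemma \ref{lem Strichartz1} to the $P_{\leq N}$ cutoff. Writing $P_{\leq N} = \sum_{M \leq N} P_M$ with dyadic $M$, the triangle inequality in $L^p_{t,x}$ and Lemma \ref{lem Strichartz1} give
\begin{align*}
\norm{P_{\leq N} e^{it\Delta} f}_{L^p_{t,x}([0,1]\times \T^2)} \lesssim \sum_{M \leq N} M^{1-\frac{4}{p}} \norm{P_M f}_{L^2_x} \lesssim N^{1-\frac{4}{p}} \norm{P_{\leq N} f}_{L^2_x},
\end{align*}
where the geometric sum in $M$ is dominated by the largest scale because $p > 4$ forces $1-\frac{4}{p}>0$. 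I would then apply Proposition \ref{prop TransferPrinciple} to the $1$-linear operator $T_0 = P_{\leq N}$: the displayed inequality is exactly a hypothesis of the form $\norm{T_0(e^{it\Delta}\phi)}_{L^p_{t,x}} \lesssim N^{1-4/p} \norm{\phi}_{L^2}$, and the conclusion of the Transfer Principle yields
\begin{align*}
\norm{P_{\leq N} u}_{L^p_{t,x}([0,1]\times \T^2)} \lesssim N^{1-\frac{4}{p}} \norm{P_{\leq N} u}_{U^p_\Delta L^2}
\end{align*}
for every $u\in U^p_\Delta L^2$, which is the first inequality in the lemma.

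For the second inequality I would use the embedding chain. By Proposition \ref{prop Embed}, $V^2_\Delta L^2 \hookrightarrow U^q_\Delta L^2$ for every $q>2$, and by Remark \ref{rmk Embed} we have $Y^0 \hookrightarrow V^2_\Delta L^2$; applied to $P_{\leq N} u$ with $q=p>4$, this yields $\norm{P_{\leq N} u}_{U^p_\Delta L^2} \lesssim \norm{P_{\leq N} u}_{Y^0([0,1])}$, completing the second inequality.

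Finally, for the $P_C$ version I would appeal to Galilean invariance. Given a cube $C\subset \R^2$ of side length $N\geq 1$, pick a (dual-)lattice point $\xi_0$ near its center and apply the Galilean boost $\tilde u(t,x) = e^{-i\xi_0\cdot x + i|\xi_0|^2 t}\,u(t, x+2t\xi_0)$, which preserves the linear Schr\"odinger flow, translates Fourier support by $-\xi_0$ (so $P_C u$ becomes a function frequency-localized in the cube $C - \xi_0 \subset \{|\xi|\lesssim N\}$), and preserves both $\norm{\cdot}_{L^p_{t,x}}$ and the $Y^0$-norm (since the latter is defined through $\widehat{e^{-it\Delta}u(t)}$, which is merely shifted in frequency). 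Applying the already-proved $P_{\leq cN}$ estimate to $\tilde u$ and undoing the boost gives the cube-localized inequality. The only mildly delicate point is checking that the boost is compatible with the torus (i.e.\ that $\xi_0$ may be chosen on the dual lattice without loss), which is automatic once $N\geq 1$ because every cube of side $N$ contains a lattice point and changing the center by $O(1)$ only inflates constants.
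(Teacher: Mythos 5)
Your proposal is correct and follows exactly the route the paper itself indicates (the paper gives no separate proof of Lemma \ref{lem Strichartz2}; its statement already cites the Transfer Principle, the embedding $Y^0 \hookrightarrow V^2_\Delta \hookrightarrow U^p_\Delta$, and Galilean invariance as the ingredients). Your write-up simply fleshes out those three steps — dyadic summation of Lemma \ref{lem Strichartz1} using $1-\tfrac{4}{p}>0$, transfer to $U^p_\Delta$, embedding to $Y^0$, and the lattice-compatible Galilean boost for the cube version — all of which are sound.
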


Besides the Strichartz estimates (with frequencies cut into cubes of side length $N$) above,  in the proof of trilinear estimates in Subsection \ref{ssec Trilinear}, we also need the following Strichartz estimates,  where we cut frequencies in rectangles with side  length $2N$ and side width $2M$.

Let $\mathcal{R}_M (N)$ be the collection of all sets in $\Z^2$ which are given as the intersection of a cube of side length $2N$ for solutions with strips of width $2M$, that is the collection of all sets of the form
\begin{align*}
(\xi_0 + [-N, N]^2) \cap [\xi \in \Z^2 : \abs{a \cdot \xi -A} \leq M]
\end{align*}
which some $\xi_0 \in \Z^2$, $a \in \R^2$, $\abs{a} \leq 1$, $A \in \R$.
\begin{prop}\label{prop Strichartz L^infty}
For all $1 \leq M \leq N$ and $R \in \mathcal{R}_M (N)$ we have
\begin{align*}
\norm{P_R e^{it\Delta} \phi}_{L_{t,x}^{\infty} (\T \times \T^2)} \lesssim M^{\frac{1}{2}} N^{\frac{1}{2}} \norm{P_R \phi}_{L_x^2 (\T^2)}.
\end{align*}
\end{prop}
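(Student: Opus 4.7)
The approach is a direct Fourier series computation followed by a sharp lattice point count. Since $P_R e^{it\Delta}\phi$ has Fourier support on the finite set $R \cap \Z^2$, I would expand it as a Fourier series, apply the triangle inequality in the frequency sum, and invoke Cauchy--Schwarz in $\xi$ together with Plancherel to obtain
\begin{align*}
\norm{P_R e^{it\Delta}\phi}_{L_{t,x}^{\infty}(\T \times \T^2)} \;\le\; \sum_{\xi \in R \cap \Z^2} \abs{\widehat{P_R\phi}(\xi)} \;\lesssim\; \bigl(\#(R \cap \Z^2)\bigr)^{1/2} \, \norm{P_R \phi}_{L_x^2(\T^2)}.
\end{align*}
Thus the whole proposition collapses to the purely geometric lattice point bound $\#(R \cap \Z^2) \lesssim MN$.

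To establish the lattice point count, I would view $R$ as the cube $\xi_0 + [-N,N]^2$ intersected with the strip $\bracket{\xi : \abs{a \cdot \xi - A} \le M}$. After normalizing to $\abs{a}=1$ (using the scaling freedom of the strip parametrization, and handling the degenerate small-$\abs{a}$ case separately by the trivial $\#(R\cap\Z^2) \le O(N^2)$ cube bound, which is acceptable when the strip already contains the entire cube and hence forces $M \gtrsim N$), the strip has Euclidean width $2M$ in the direction of $a$, while the cube cuts the extent in the perpendicular direction down to $O(N)$. This presents $R$ as a convex planar region of area $\lesssim MN$. A classical Gauss-type lattice point estimate of the form
\begin{align*}
\bigl| \#(\Omega \cap \Z^2) - \mathrm{area}(\Omega) \bigr| \;\lesssim\; \mathrm{perimeter}(\Omega) + 1
\end{align*}
applied to this slab, together with $1 \le M \le N$, then delivers $\#(R \cap \Z^2) \lesssim MN$.

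Combining the two steps gives exactly $\norm{P_R e^{it\Delta}\phi}_{L_{t,x}^{\infty}} \lesssim M^{1/2} N^{1/2} \norm{P_R \phi}_{L_x^2}$. The analytic ingredients (Fourier expansion, Cauchy--Schwarz, Plancherel) are routine; the substantive step is the lattice count, and the main obstacle there is the careful geometric reduction to a slab of dimensions $O(M) \times O(N)$ while keeping the constant independent of the particular $\xi_0$, $a$, and $A$ defining $R \in \mathcal{R}_M(N)$. Once this geometric normalization is in place, the convex-body lattice point counting closes the argument.
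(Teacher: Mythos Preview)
Your proposal is correct and follows essentially the same route as the paper: the analytic step (Fourier expansion, triangle inequality, Cauchy--Schwarz, Plancherel) is identical to the paper's, reducing everything to the lattice point bound $\#(R\cap\Z^2)\lesssim MN$. The only difference is in how you count lattice points: the paper covers the slab by roughly $N/M$ axis-parallel cubes of side length $M$, each containing $\lesssim M^2$ lattice points, whereas you invoke the Gauss area-plus-perimeter estimate for convex bodies. Both arguments are elementary and give the same bound; the paper's covering argument is slightly more hands-on, while yours packages the geometry into a single classical lemma. One small remark: your treatment of the degenerate small-$\abs{a}$ case (``forces $M\gtrsim N$'') is not quite accurate as stated, but the paper's proof does not address this case either, and in the application (the trilinear estimate) the strips always arise with $\abs{a}=1$, so this is a non-issue in context.
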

\begin{proof}[Proof of Proposition \ref{prop Strichartz L^infty}]
For fixed $t$ we the uniform bound
\begin{align*}
\norm{P_R e^{it\Delta} \phi}_{L_{t,x}^{\infty} (\T^2)} \leq \sum_{\xi \in \Z^2 \cap R} \abs{\widehat{\phi} (\xi)} \leq (\# (\Z^2 \cap R))^{\frac{1}{2}} \norm{P_R \phi}_{L_x^2 (\T^2)}.
\end{align*}
We cover the rectangle by approximately $\frac{N}{M}$ cubes of side length $M$ and faces parallel to the coordinate planes. Each such cube contains approximately $M^2$ lattice points. Altogether, we arrive at the upper bound
\begin{align*}
\# (\Z^2 \cap R) \lesssim MN ,
\end{align*}
which shows the claim.
\end{proof}

\begin{cor}
Let $p > 4$ and $0 < \delta < \frac{1}{2} - \frac{2}{p}$. For all $1 \leq M \leq N$ and $R \in \mathcal{R}_M (N)$ we have 
\begin{align*}
\norm{P_R e^{it\Delta} \phi}_{L_{t,x}^p (\T \times \T^2)} \lesssim N^{1-\frac{4}{p}} (\frac{M}{N})^{\delta} \norm{P_R \phi}_{ L_x^2 (\T^2)}.
\end{align*}
\end{cor}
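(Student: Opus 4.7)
The plan is to obtain the stated inequality by Hölder interpolation between the $L_{t,x}^\infty$ endpoint just proved in Proposition \ref{prop Strichartz L^infty} and a standard Strichartz endpoint $L_{t,x}^q$ for some $q > 4$ chosen appropriately in terms of $p$ and $\delta$.

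First, since $R \in \mathcal{R}_M(N)$ is contained in a cube of side length $2N$, the function $P_R \phi$ has Fourier support in such a cube, so Lemma \ref{lem Strichartz1} (cube version) yields, for every $q > 4$,
\begin{align*}
\norm{P_R e^{it\Delta} \phi}_{L^q_{t,x}(\T \times \T^2)} \lesssim_q N^{1 - \frac{4}{q}} \norm{P_R \phi}_{L_x^2(\T^2)}.
\end{align*}
Proposition \ref{prop Strichartz L^infty} supplies the endpoint
\begin{align*}
\norm{P_R e^{it\Delta} \phi}_{L^\infty_{t,x}(\T \times \T^2)} \lesssim M^{\frac{1}{2}} N^{\frac{1}{2}} \norm{P_R \phi}_{L_x^2(\T^2)}.
\end{align*}

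Next, for $p > q$, the elementary Hölder interpolation $\norm{f}_{L^p} \leq \norm{f}_{L^q}^{q/p} \norm{f}_{L^\infty}^{1 - q/p}$ applied to $P_R e^{it\Delta} \phi$ produces
\begin{align*}
\norm{P_R e^{it\Delta} \phi}_{L_{t,x}^p} \lesssim \bigl(N^{1 - \frac{4}{q}}\bigr)^{\frac{q}{p}} \bigl(M^{\frac{1}{2}} N^{\frac{1}{2}}\bigr)^{1 - \frac{q}{p}} \norm{P_R \phi}_{L_x^2} = N^{1 - \frac{4}{p}} \Bigl(\frac{M}{N}\Bigr)^{\frac{1}{2}(1 - \frac{q}{p})} \norm{P_R \phi}_{L_x^2},
\end{align*}
after collecting the $N$-exponents. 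Since $M \leq N$, the factor $(M/N)^{\frac{1}{2}(1 - q/p)}$ is bounded above by $(M/N)^\delta$ as soon as $\frac{1}{2}(1 - q/p) \geq \delta$, i.e.\ $q \leq p(1 - 2\delta)$. The hypothesis $\delta < \frac{1}{2} - \frac{2}{p}$ is precisely what makes $p(1 - 2\delta) > 4$, so such a choice of $q \in (4, p(1 - 2\delta)]$ exists.

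No serious obstacle is anticipated: the $L^\infty$ endpoint has just been established, the $L^q$ cube Strichartz estimate is recorded in Lemma \ref{lem Strichartz1}, and the rest is arithmetic of exponents in a two-point Hölder interpolation. The only place to be careful is verifying that the compatibility condition on $q$ is equivalent to the stated restriction on $\delta$, which is an immediate computation.
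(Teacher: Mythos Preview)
Your proposal is correct and is exactly the intended argument: the paper states the corollary immediately after Proposition~\ref{prop Strichartz L^infty} without proof, and the implied derivation is precisely the H\"older interpolation you carry out between the $L_{t,x}^\infty$ bound of Proposition~\ref{prop Strichartz L^infty} and the cube Strichartz estimate of Lemma~\ref{lem Strichartz1}. Your computation of the exponents and the verification that the constraint $q\in(4,p(1-2\delta)]$ is nonempty exactly when $\delta<\tfrac{1}{2}-\tfrac{2}{p}$ are both correct.
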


\subsection{Trilinear estimates}\label{ssec Trilinear}

\begin{prop}[Frequency localized trilinear estimates]\label{prop Loc Trilinear}
There exists $\delta > 0$ such that for any $N_1 \geq N_2 \geq N_3 \geq 1$ and any interval $I \subset [0, 2\pi]$ we have
\begin{align}\label{eq Trilinear}
\norm{  P_{N_1} u_1  P_{N_2} u_2 P_{N_3} u_3  }_{L_{t,x}^2 (I \times \T^2)} \lesssim (N_2 N_3)^{\frac{1}{2}} \bigg( \frac{N_3}{N_1} + \frac{1}{N_2} \bigg)^{\delta}\cdot \prod_{j=1}^3 \norm{P_{N_j} u_j}_{Y^0(I)}.
\end{align}
\end{prop}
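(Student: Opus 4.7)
The plan is to reduce to free Schrödinger evolutions via the transfer principle, partition the top two Littlewood–Paley pieces into rectangles at an intermediate transverse scale $M$ so as to expose frequency-separation orthogonality, and then close the estimate via trilinear Hölder with exponents $(p_0,p_0,p_1)$ satisfying $2/p_0+1/p_1 = 1/2$ (the same pair appearing in Definition \ref{defn Z}), using the refined rectangle Strichartz estimate in the Corollary to Proposition \ref{prop Strichartz L^infty} on the first two factors.

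First I would reduce to the free-solution case. Using the embedding chain $Y^0 \hookrightarrow V^2_\Delta L^2 \hookrightarrow U^p_\Delta L^2$ for any $p > 2$ (Propositions \ref{prop Embed} and \ref{prop TransferPrinciple}), it is enough to prove the estimate when $u_j = e^{it\Delta} \phi_j$ with $\|P_{N_j}\phi_j\|_{L^2}=1$, up to a harmless $\max_j N_j^\varepsilon$ loss. That loss is absorbed by interpolating against the trivial baseline
\[
\|P_{N_1} u_1\, P_{N_2} u_2\, P_{N_3} u_3\|_{L^2_{t,x}(I\times\T^2)} \lesssim (N_1 N_2 N_3)^{1/3}\prod_{j=1}^{3}\|P_{N_j} u_j\|_{Y^0(I)},
\]
which is immediate from three $L^6_{t,x}$-Strichartz bounds coming from Lemma \ref{lem Strichartz2}. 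The cost is a shrinkage of the final exponent $\delta$, which we are free to do.

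Second, I would fix a parameter $M\in[1,N_1]$ and decompose $P_{N_j}u_j = \sum_{R_j \in \mathcal{R}_M(N_j)} P_{R_j}u_j$ for $j=1,2$, leaving $P_{N_3}u_3$ untouched. Because the Fourier support of $P_{R_1}u_1\cdot\overline{P_{R_2}u_2}\cdot P_{N_3}u_3$ lies in $R_1 - R_2 + \{|\xi|\lesssim N_3\}$, a lattice-geometry count shows that each output frequency is hit by $O((1+N_3/M)^2)$ pairs $(R_1,R_2)$. Plancherel in $(t,x)$ then yields the almost-orthogonal square-sum bound
\[
\|P_{N_1}u_1\,P_{N_2}u_2\,P_{N_3}u_3\|_{L^2_{t,x}}^2 \lesssim \bigl(1+\tfrac{N_3}{M}\bigr)^2\sum_{R_1,R_2}\|P_{R_1}u_1\,P_{R_2}u_2\,P_{N_3}u_3\|_{L^2_{t,x}}^2.
\]
For each single triple I apply Hölder with $(p_0,p_0,p_1)$, bound the first two factors with the rectangle Strichartz corollary (each contributing $N_j^{1-4/p_0}(M/N_j)^{\delta_0}$ for some $0<\delta_0<\tfrac12-\tfrac{2}{p_0}$), and the third with the standard Strichartz from Lemma \ref{lem Strichartz2}, giving $N_3^{1-4/p_1}$. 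Cauchy–Schwarz over $(R_1,R_2)$ reassembles the rectangle $L^2$ norms into $\|P_{N_1}\phi_1\|_{L^2}\|P_{N_2}\phi_2\|_{L^2}$, leaving an overall prefactor
\[
\bigl(1+\tfrac{N_3}{M}\bigr)\,N_1^{1-\tfrac{4}{p_0}}\bigl(\tfrac{M}{N_1}\bigr)^{\delta_0} N_2^{1-\tfrac{4}{p_0}}\bigl(\tfrac{M}{N_2}\bigr)^{\delta_0} N_3^{1-\tfrac{4}{p_1}}.
\]
Choosing $p_0$ close to $4$ makes $1-4/p_0$ small, and optimizing $M$ in $[1,N_1]$—typically $M\sim\max(N_3,1)$—converts the bound into $(N_2 N_3)^{1/2}$ times the claimed gain $(N_3/N_1+1/N_2)^{\delta}$.

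The main obstacle I anticipate is the careful bookkeeping in the previous step: the multiplicity $(1+N_3/M)^2$ has to be dominated by the rectangle Strichartz gain $(M^2/(N_1 N_2))^{\delta_0}$, and the optimization over $M$ must simultaneously produce \emph{both} the $N_3/N_1$ and the $1/N_2$ terms sitting inside the exponent—not just their product. Two regimes then arise naturally: when $N_3\gtrsim 1$ one takes $M\sim N_3$ so the multiplicity collapses and the gain is driven by $N_3/N_1$, while in the regime $N_3\sim 1$ (or $N_3\ll M$) the rectangles are unit cubes and the gain must be extracted from the $1/N_2$ factor via Proposition \ref{prop Strichartz L^infty} applied to $P_{R_2}u_2$. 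Unifying the two regimes and transporting the final estimate from free evolutions back to $Y^0$ inputs (via the interpolation set up in the first paragraph) without spoiling the small $\delta$ is the delicate point.
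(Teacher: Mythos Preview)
Your almost-orthogonality step is where the argument breaks. You decompose both $P_{N_1}u_1$ and $P_{N_2}u_2$ into rectangles $R_1\in\mathcal{R}_M(N_1)$, $R_2\in\mathcal{R}_M(N_2)$ and assert that, because the spatial Fourier support of $P_{R_1}u_1\cdot\overline{P_{R_2}u_2}\cdot P_{N_3}u_3$ lies in $R_1-R_2+\{|\xi|\lesssim N_3\}$, each output frequency is hit by only $O((1+N_3/M)^2)$ pairs. This is false: the spatial output frequency only pins down $R_1-R_2$ (up to $O(1+N_3/M)$ choices), not the individual rectangles. For every admissible difference there remain on the order of $N_2/M$ pairs $(R_1,R_2)$ sharing it, so the true multiplicity is at least $(1+N_3/M)\cdot N_2/M$, which destroys the gain you need. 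Space-time Plancherel does not save this without further input, because nothing in your decomposition forces the time frequencies of different $(R_1,R_2)$ pairs to separate.

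The paper's mechanism is different and more delicate. One first localizes $P_{N_1}u_1$ to cubes $C$ of side $N_2$ (this step \emph{is} a genuine spatial almost-orthogonality, since $P_{N_2}u_2\,P_{N_3}u_3$ has Fourier support in a ball of radius $\lesssim N_2$). Only then does one partition $C$ into strips $R_k$ of width $M=\max\{N_2^2/N_1,1\}$ \emph{perpendicular to the center $\xi_0$ of $C$}. The point is that for $\xi_1\in R_k$ one has $|\xi_1|^2=M^2k^2+O(M^2k)$, while multiplication by $P_{N_2}u_2\,P_{N_3}u_3$ shifts the time frequency by at most $O(N_2^2)\lesssim M^2k$. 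Hence the pieces $P_{R_k}u_1\cdot P_{N_2}u_2\cdot P_{N_3}u_3$ are separated in the \emph{time} frequency variable, and space-time Plancherel gives the square-sum bound for a single sum over $k$. This is the source of the $N_3/N_1+1/N_2$ gain; your spatial argument cannot reproduce it.

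A secondary issue: your passage from free solutions to $Y^0$ inputs is not correct as written. The transfer principle takes a free-solution $L^2_{t,x}$ estimate to a $U^2_\Delta$ estimate, but $U^2_\Delta\subsetneq Y^0$, so you still have to cross that gap. Your proposed interpolation against the baseline $(N_1N_2N_3)^{1/3}\prod\|P_{N_j}u_j\|_{Y^0}$ does not help, because that baseline is \emph{worse} than $(N_2N_3)^{1/2}$ whenever $N_1\gg N_2$ (the ratio is $(N_1/(N_2N_3)^{1/2})^{1/3}\gg 1$), so no convex combination recovers the target. The paper instead proves the trilinear bound separately in $U^4_\Delta$ (via plain H\"older and Strichartz on the cube $C$, no gain beyond $(N_3/N_2)^{\delta'}$ needed) and in $U^2_\Delta$ (via the strip decomposition, with the full gain), and then invokes the standard $U^2$--$U^q$ interpolation to land in $V^2_\Delta\supset Y^0$ at the cost of shrinking $\delta$.
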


\begin{proof}[Proof of Proposition \ref{prop Loc Trilinear}]
The proof of this trilinear estimates is adapted  from the proof of Proposition 3.5 in \cite{HTT1}.
We begin the proof with some simplifications. First of all, it is enough to consider the case $I = [0, 2 \pi]$. Second, given a partition $\Z^2 = \cup C_j$ int cubes $C_j \in \mathcal{C}_{N_2}$, the outputs $P_{N_1} P_{C_j} u_1 P_{N_2} u_2 P_{N_3} u_3$ are almost orthogonal because the spatial Fourier support of $u_2 P_{N_3} u_3$ is contained in at most finitely many cubes of side length $N_2$. Additionally, we have
\begin{align*}
\norm{u}_{Y^0 (I)}^2 = \sum_j \norm{P_{C_j} u}_{Y^0(I)}^2,
\end{align*}
which implies that it suffices to prove \eqref{eq Trilinear} in the case when the first factor is further restricted to a cube $C \in \mathcal{C}_{N_2}$,
\begin{align}\label{eq Trilinear P_C}
\norm{P_C P_{N_1} u_1 P_{N_2} u_2 P_{N_3} u_3}_{L_{t,x}^2(I \times \T^2)} \lesssim (N_2 N_3)^{\frac{1}{2}} ( \frac{N_3}{N_1} + \frac{1}{N_2} )^{\delta} \prod_{j=1}^3 \norm{P_{N_j} u_j}_{Y^0(I)}.
\end{align}
Due to Remark \ref{rmk Embed}, we are allowed to replace $Y^0$ by $V_{\Delta}^2 L^2$. Then \eqref{eq Trilinear P_C} follows by interpolation, from the following two trilinear estimates:
\begin{align}\label{eq Interpolation1}
\norm{P_C P_{N_1} u_1 P_{N_2} u_2 P_{N_3} u_3}_{L_{t,x}^2 (I \times \T^2)} \lesssim (N_2 N_3)^{\frac{1}{2}} ( \frac{N_3}{N_1} )^{\delta' } \prod_{j=1}^3 \norm{P_{N_j} u_j}_{U_{\Delta}^4  (I, L^2)}
\end{align}
for $0 < \delta'  < \frac{1}{2}$, respectively,
\begin{align}\label{eq Interpolation2}
\norm{P_C P_{N_1} u_1 P_{N_2} u_2 P_{N_3} u_3}_{L_{t,x}^2(I \times \T^2)} \lesssim (N_2 N_3)^{\frac{1}{2}} ( \frac{N_3}{N_1} + \frac{1}{N_2} )^{\delta^{\prime \prime}} \prod_{j=1}^3 \norm{P_{N_j} u_j}_{U_{\Delta}^2  (I, L^2)}
\end{align}
for some $\delta^{\prime \prime} >0$.

The first bound \eqref{eq Interpolation1} follows from H\"older's inequality with $4 < p < 5$ and $q$ such that $\frac{2}{p} + \frac{1}{q} = \frac{1}{2}$ and  Strichartz estimates
\begin{align*}
& \quad \norm{P_C P_{N_1} u_1 P_{N_2} u_2 P_{N_3} u_3}_{L_{t,x}^2(I \times \T^2)}  \\
& \leq \norm{P_C P_{N_1} u_1}_{L_{t,x}^p(I \times \T^2)} \norm{ P_{N_2} u_2}_{L_{t,x}^p(I \times \T^2)}  \norm{P_{N_3} u_3}_{L_{t,x}^q (I \times \T^2)} \\
& \lesssim N_2^{2-\frac{8}{p}} N_3^{1-\frac{4}{q}} \norm{P_{N_1} u_1}_{U_{\Delta}^p  (I, L^2)} \norm{ P_{N_2} u_2}_{U_{\Delta}^p  (I, L^2)}  \norm{P_{N_3} u_3}_{U_{\Delta}^q  (I, L^2)}
\end{align*}
and the embedding $U_{\Delta}^4 L^2 \hookrightarrow U_{\Delta}^p L^2 \hookrightarrow U_{\Delta}^q L^2$.

For the second bound \eqref{eq Interpolation2} we can use the transfer principle (Proposition \ref{prop TransferPrinciple}) to reduce the problem to the similar estimate for the produce of three solutions to the linear Schr\"odinger equation:
\begin{align}\label{eq Interpolation22}
\norm{P_C P_{N_1} u_1 P_{N_2} u_2 P_{N_3} u_3}_{L_{t,x}^2(I \times \T^2)}  \lesssim (N_2 N_3)^{\frac{1}{2}} ( \frac{N_3}{N_1} + \frac{1}{N_2} )^{\delta^{\prime \prime}} \prod_{j=1}^3 \norm{P_{N_j} \phi_j}_{ L^2 (\T^2)}.
\end{align}
Let $\xi_0$ be the center of $C$. We partition $C = \cup R_k$ into almost disjoint strips of width $M = \max \{ \frac{N_2^2 }{N_1} ,1 \}$, which are orthogonal to $\xi_0$:
\begin{align*}
R_k = \{ \xi \in C : \xi \cdot \xi_0 \in [\abs{\xi_0} MK , \abs{\xi_0} M(K+1)) \}, \quad \abs{ k} \approx \frac{N_1}{M}.
\end{align*}
Each $R_k$ is the intersection of a cube of side length $2N_2$ with a strip of width $M$, and we decompose 
\begin{align*}
P_C P_{N_1} u_1 P_{N_2} u_2 P_{N_3} u_3 = \sum_k P_{R_k} P_{N_1} u_1 P_{N_2} u_2 P_{N_3} u_3
\end{align*}
and claim that the summands are almost orthogonal in $L^2 (\T \times \T^2)$. This orthogonality no longer comes from the spatial frequencies, but from the time frequency. Indeed, for $\xi_1 \in R_k$ we have
\begin{align*}
\xi_1^2 = \frac{1}{\abs{\xi_0}^2} \abs{\xi_1 \cdot \xi_0}^2 + \abs{\xi_1 - \xi_0}^2 - \frac{1}{\abs{\xi_0}^2}  \abs{(\xi_1 - \xi_0) \cdot \xi_0}^2 = M^2 K^2 + O(M^2 K)
\end{align*}
since $N_2^2 \lesssim M^2 K$. The second and third factors alter the time frequency by at most $O(N_2^2)$. Hence the expressions $P_{R_k} P_{N_1} u_1 P_{N_2} u_2 P_{N_3} u_3$ are localized at time frequency $M^2 K + O(M^k)$ and thus are almost orthogonal:
\begin{align*}
\norm{P_{C} P_{N_1} u_1 P_{N_2} u_2 P_{N_3} u_3}_{L^2(I \times \T^2)}^2 \lesssim \sum_k \norm{P_{R_k} P_{N_1} u_1 P_{N_2} u_2 P_{N_3} u_3}_{L^2(I \times \T^2)}^2.
\end{align*}

On the other hand, Strichartz estimates yields
\begin{align*}
\norm{P_{R_k} P_{N_1} u_1 P_{N_2} u_2 P_{N_3} u_3}_{L^2(I \times \T^2)} \lesssim N_2^{2-\frac{8}{p}} N_3^{1-\frac{4}{p}} (\frac{M}{N_2})^{\varepsilon} \norm{P_{R_k} P_{N_1} \phi_1}_{L^2 (\T^2)} \norm{P_{N_2} \phi_2}_{L^2 (\T^2)} \norm{ P_{N_3} \phi_3}_{L^2 (\T^2)},
\end{align*}
with $4 < p <5$ and $q$ such that $\frac{2}{p} + \frac{1}{q} =\frac{1}{2}$ and $0 < \varepsilon < \frac{1}{2} -\frac{2}{p}$. Then \eqref{eq Interpolation22} follows by summing up the squares with respect to $k$. 

We finish the proof of Proposition \ref{prop Loc Trilinear}.
\end{proof}

Before stating the trilinear estimates, we first introduce 
\begin{align*}
\norm{u}_{Z' (I)} = \norm{u}_{Z(I)}^{\frac{1}{2}} \norm{u}_{X^{\frac{1}{2}} (I)}^{\frac{1}{2}}.
\end{align*}

\begin{lem}\label{lem Trilinear}
For any $N_1, N_2, N_3 \geq 1$ and any interval $I \subset [0, 2\pi]$ we have
\begin{align}\label{eq:trilinear2}
\norm{  P_{N_1} u_1  P_{N_2} u_2 P_{N_3} u_3  }_{L_{t,x}^2 (I \times \T^2)}  \lesssim   \norm{P_{N_1}u_1}_{Y^0 (I)} \norm{P_{N_2}u_2}_{Z  (I)} \norm{P_{N_3}u_3}_{Z  (I)}.
\end{align}
\end{lem}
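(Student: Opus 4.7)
The plan is to combine two complementary bounds: a H\"older/Strichartz estimate at the exponents $(p_1, p_0, p_0)$ satisfying $\tfrac{1}{p_1} + \tfrac{2}{p_0} = \tfrac{1}{2}$ (the identity built into Definition \ref{defn Z}), together with the sharp frequency-localized trilinear estimate of Proposition \ref{prop Loc Trilinear}. By the symmetry of the right-hand side in $u_2, u_3$, I may assume $N_2 \geq N_3$.

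First I would apply H\"older in $L^2_{t,x}$ with the Lebesgue exponents $(p_1, p_0, p_0)$, convert $\|P_{N_1} u_1\|_{L^{p_1}_{t,x}}$ via Lemma \ref{lem Strichartz2} at cost $N_1^{1-4/p_1}$, and control $\|P_{N_j} u_j\|_{L^{p_0}_{t,x}}$ for $j=2,3$ directly by $N_j^{1/2-4/p_0} \|P_{N_j} u_j\|_Z$ read off from Definition \ref{defn Z}. The arithmetic identity $(1-4/p_1) + 2(1/2-4/p_0) = 0$ then collapses the combined $N$-weight to $(N_1^2/N_2 N_3)^{39/82}$, giving
\begin{align*}
\|P_{N_1} u_1 P_{N_2} u_2 P_{N_3} u_3\|_{L^2_{t,x}(I \times \T^2)} \lesssim \Big(\frac{N_1^2}{N_2 N_3}\Big)^{39/82} \|P_{N_1} u_1\|_{Y^0} \|P_{N_2} u_2\|_Z \|P_{N_3} u_3\|_Z.
\end{align*}
This prefactor is $\lesssim 1$ precisely when $N_1^2 \leq N_2 N_3$, which closes the lemma in that regime.

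In the complementary regime $N_1^2 > N_2 N_3$ (which, under $N_2 \geq N_3$, forces either $N_1 \geq N_2 \geq N_3$ or $N_2 > N_1 > N_3$), Proposition \ref{prop Loc Trilinear}, applied with the ordering that places the largest frequency first, furnishes the competing pure-$Y^0$ bound
\begin{align*}
\|P_{N_1} u_1 P_{N_2} u_2 P_{N_3} u_3\|_{L^2_{t,x}} \lesssim (N_{\text{med}} N_{\min})^{1/2} \Big(\frac{N_{\min}}{N_{\max}} + \frac{1}{N_{\text{med}}}\Big)^{\delta} \prod_{j=1}^3 \|P_{N_j} u_j\|_{Y^0},
\end{align*}
with the small decoupling exponent $\delta > 0$. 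Taking a geometric mean of this inequality (at power $\theta$) and the H\"older bound above (at power $1-\theta$), and using the dyadic comparison $\|P_N u\|_Z \lesssim N^{1/2} \|P_N u\|_{Y^0}$ from Remark \ref{rmk Z<X} to re-express the hybrid factor $\|P_{N_j} u_j\|_{Y^0}^\theta \|P_{N_j} u_j\|_Z^{1-\theta}$ — precisely the role of the intermediate norm $\|u\|_{Z'} = \|u\|_Z^{1/2} \|u\|_{X^{1/2}}^{1/2}$ introduced immediately before the lemma — one selects $\theta$ close to $1$ so that the $\delta$-gain absorbs the excess $(N_1^2/N_2 N_3)^{(1-\theta) \cdot 39/82}$, yielding the stated bound.

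The main obstacle is the interpolation step in the regime $N_1^2 > N_2 N_3$: the norms $Y^0$ and $Z$ are only one-sidedly related on dyadic pieces ($Z \lesssim N^{1/2} Y^0$, with no automatic reverse), so Proposition \ref{prop Loc Trilinear} alone cannot be converted into a $Z$-bound on $u_2, u_3$. The introduction of the intermediate $Z'$-norm and the small decoupling exponent $\delta$ provides exactly the cushion needed to balance the Proposition \ref{prop Loc Trilinear}/H\"older interpolation, so the technical crux is checking that this balance can be achieved uniformly in $(N_1, N_2, N_3)$ and on every subinterval $I \subset [0, 2\pi]$; the case $N_2 > N_1 > N_3$ requires no new idea beyond permuting the roles of $N_1$ and $N_2$ in Proposition \ref{prop Loc Trilinear}.
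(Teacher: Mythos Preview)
Your interpolation step in the regime $N_1^2 > N_2 N_3$ does not close, and the issue is exactly the one you flag: the target of the lemma is the pure $Z$-norm on $u_2,u_3$, and the geometric mean of the H\"older bound (which gives $Z$ on $u_2,u_3$) with Proposition~\ref{prop Loc Trilinear} (which gives $Y^0$ on all three factors) produces the hybrid $\|P_{N_j}u_j\|_{Y^0}^{\theta}\|P_{N_j}u_j\|_{Z}^{1-\theta}$. There is no way to dominate this by $\|P_{N_j}u_j\|_{Z}$: the one-sided comparison $\|P_N u\|_Z \lesssim N^{1/2}\|P_N u\|_{Y^0}$ goes the wrong direction, the $Z'$-norm is irrelevant here (the lemma asks for $Z$, not $Z'$), and the $\delta$-gain from Proposition~\ref{prop Loc Trilinear} is a frequency-ratio gain, not a norm comparison---it cannot turn a $Y^0$-factor into a $Z$-factor. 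A concrete obstruction: take $u_2 = P_{N_2}e^{it\Delta}\phi$ on a tiny interval $I$; then $\|P_{N_2}u_2\|_{Y^0(I)}$ stays of order $\|\phi\|_{L^2}$ while $\|P_{N_2}u_2\|_{Z(I)}$ can be made arbitrarily small, so no inequality of the form $\|P_{N_2}u_2\|_{Y^0}^{\theta}\|P_{N_2}u_2\|_{Z}^{1-\theta}\lesssim \|P_{N_2}u_2\|_{Z}$ is possible.

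The underlying problem is your H\"older placement: putting the high-frequency factor $P_{N_1}u_1$ in $L^{p_1}$ forces Strichartz at the coarse scale $N_1$ and generates the bad factor $N_1^{1-4/p_1}=N_1^{39/41}$. The paper's proof avoids this entirely and needs no interpolation with Proposition~\ref{prop Loc Trilinear}: in the case $N_1\geq N_2\geq N_3$ one decomposes $P_{N_1}u_1=\sum_k P_{C_k}P_{N_1}u_1$ into cubes $C_k$ of side $N_2$, observes that the outputs $(P_{C_k}P_{N_1}u_1)P_{N_2}u_2P_{N_3}u_3$ are almost orthogonal in $L^2_x$, and then applies H\"older with the placement $(p_0,p_0,p_1)$---so the cube Strichartz (Lemma~\ref{lem Strichartz2}) gives $\|P_{C_k}P_{N_1}u_1\|_{L^{p_0}}\lesssim N_2^{1-4/p_0}\|P_{C_k}P_{N_1}u_1\|_{Y^0}$ at scale $N_2$ rather than $N_1$, while the \emph{smallest} frequency $P_{N_3}u_3$ sits in $L^{p_1}$. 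The resulting prefactor $(N_3/N_2)^{16/p_0-3}$ is $\leq 1$, and the $\ell^2$-sum over $k$ recovers $\|P_{N_1}u_1\|_{Y^0}$. The case $N_2\geq N_1$ needs no cube decomposition and follows by the same H\"older placement directly. This is the missing idea in your approach.
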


\begin{proof}[Proof of Lemma \ref{lem Trilinear}]
This proof is adapted from Lemma 3.1 in \cite{IP1}. 
Without loss of generality, we assume that $N_2\geq N_3$. 
Then we only need to consider two cases: 
\begin{enumerate}
\item
$N_1\geq N_2\geq N_3$,
\item
$N_2 \geq N_1$ and $N_2\geq N_3$.
\end{enumerate}
First in the first case  ($N_1\geq N_2\geq N_3$), in order to prove \eqref{eq:trilinear2} we observe that if $\{ C_k \}_{k \in \Z}$ is a partition of $\Z^2$ in cubes of size $N_2$ then the functions $(P_{C_k} u_1) u_2 u_3$ are almost orthogonal in $L_x^2$. By the almost orthogonality, H\"older's inequality and Strichartz estimates (Lemma \ref{lem Strichartz2}), we have 
\begin{align*}
& \quad \norm{P_{N_1} u_1 P_{N_2} u_2 P_{N_3} u_3}_{L_{t,x}^2(I \times \T^2)}^2 \\
& \lesssim \sum_k \norm{(P_{C_k} P_{N_1} u_1) P_{N_2} u_2 P_{N_3} u_3}_{L_{t,x}^2(I \times \T^2)}^2\\
& \lesssim \sum_k \norm{P_{C_k} P_{N_1} u_1}_{L_{t,x}^{p_0}(I \times \T^2)}^2 \norm{P_{N_2} u_2}_{L_{t,x}^{p_0}(I \times \T^2)}^2  \norm{P_{N_3} u_3}_{L_{t,x}^{\frac{2p_0}{p_0-4}}(I \times \T^2)}^2 \\
&\lesssim  \parenthese{\frac{N_3}{N_2}}^{\frac{16}{p_0}-3} \cdot\norm{P_{N_2} u_2}_{Z(I)}^2  \norm{P_{N_3} u_3}_{Z(I)}^2 \sum_k \norm{P_{C_k} P_{N_1} u_1}_{U_{\Delta}^{p_0} (I, L^2)}^2\\
& \lesssim  \norm{P_{N_2} u_2}_{Z(I)}^2  \norm{P_{N_3} u_3}_{Z(I)}^2 \sum_k \norm{P_{C_k} P_{N_1} u_1}_{Y^0 (I)}^2,
\end{align*}
where $p_0 = 4.1$, $\frac{2p_0}{p_0-4} = p_1 =82$ as defined in Definition \ref{defn Z}. Note that in the last inequality above, we used the embedding  $Y^0 (I) \hookrightarrow U_{\Delta}^{p_0}(I , L^2)$ and the fact that the $Y^0$-norm is square-summable. Now we finish the calculation of \eqref{eq:trilinear2} when  $N_1\geq N_2\geq N_3$.

In the second case ($N_2\geq N_1$ and $ N_2\geq N_3$), by H\"older's inequality and Strichartz estimates (Lemma \ref{lem Strichartz2}), we have 
\begin{align*}
\norm{P_{N_1} u_1 P_{N_2} u_2 P_{N_3} u_3}_{L_{t,x}^2(I \times \T^2)}^2 
& \lesssim \norm{P_{N_1} u_1}_{L_{t,x}^{p_0}(I \times \T^2)}^2 \norm{P_{N_2} u_2}_{L_{t,x}^{p_0}(I \times \T^2)}^2  \norm{P_{N_3} u_3}_{L_{t,x}^{\frac{2p_0}{p_0-4}}(I \times \T^2)}^2 \\
&\lesssim  \parenthese{\frac{N_1}{N_2}}^{2-\frac{8}{p_0}}
\parenthese{\frac{N_3}{N_2}}^{\frac{16}{p_0}-3}  \norm{P_{N_2} u_2}_{Z(I)}^2  \norm{P_{N_3 } u_3}_{Z(I)}^2 \norm{P_{N_1} u_1}_{U_{\Delta}^{p_0} (I, L^2)}^2\\
& \lesssim  \norm{P_{N_2} u_2}_{Z(I)}^2  \norm{P_{N_3} u_3}_{Z(I)}^2 \norm{P_{N_1} u_1}_{Y^0 (I)}^2.
\end{align*}
Similarly as in the first case, we obtain  \eqref{eq:trilinear2}  for the second case.
\end{proof}

\begin{rmk}\label{rmk Trilinear}
Interpolating Proposition  \ref{prop Loc Trilinear} and Lemma \ref{lem Trilinear}, we have that if $N_1 \geq N_2 \geq N_3 \geq 1$
\begin{align*}
\norm{ P_{N_1} u_1  P_{N_2} u_2  P_{N_3} u_3}_{L_{t,x}^2 (I \times \T^2)} & \lesssim (N_2 N_3)^{\frac{1}{2}} \bigg( \frac{N_3}{N_1} + \frac{1}{N_2} \bigg)^{\delta'} \norm{P_{N_1}u_1}_{Y^0 (I)} \norm{P_{N_2}u_2}_{Z  (I)} \norm{P_{N_3}u_3}_{Z  (I)} , \\
\norm{ P_{N_1} u_1  P_{N_2} u_2  P_{N_3} u_3}_{L_{t,x}^2 (I \times \T^2)}  &  \lesssim (N_2 N_3)^{\frac{1}{2}} \bigg( \frac{N_3}{N_1} + \frac{1}{N_2} \bigg)^{\delta'}\norm{P_{N_1}u_1}_{Z (I)} \norm{P_{N_2}u_2}_{Y^0  (I)} \norm{P_{N_3}u_3}_{Z  (I)},
\end{align*}
where $0 < \delta' < \delta$.
\end{rmk}

\begin{lem}[Nonlinear estimates]\label{lem Nonlinear est}
For $u_k \in X^{\frac{1}{2}} (I)$, $k=1,2,\cdots, 5$, $\abs{I} \leq 1$, the estimate
\begin{align*}
\norm{\prod_{k=1}^5 \widetilde{u}_k}_{N(I)} \lesssim \sum_{j=1}^5 \norm{u_j}_{X^{\frac{1}{2}} (I)} \prod_{\substack{k\in\{1,2,\cdots, 5\} \\k\neq j}} \norm{u_k}_{Z' (I)}
\end{align*}
holds, where $\widetilde{u}_k \in \{ u_k , \overline{u}_k \}$.
Furthermore, if there exist constants $A,B>0$, such that $u_k = P_{>A} u_k$ for $k=1,\cdots, 4$ and $u_5=P_{<B} u_5$, then  
\begin{equation}\label{lem3.8 2}
\norm{\prod_{k=1}^5 \widetilde{u}_k}_{N(I)} \lesssim\norm{u_5}_{Z'(I)}  \bigg( \sum_{j=1}^4 \norm{u_j}_{X^{\frac{1}{2}} (I)} \prod_{\substack{k\in\{1, 2,3, 4\} \\k\neq j}} \norm{u_k}_{Z' (I)}\bigg).
\end{equation}
\end{lem}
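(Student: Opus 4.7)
The plan is to reduce to estimating a sextilinear form via duality, Littlewood--Paley decompose all six factors, split the resulting integral by Cauchy--Schwarz into two $L^2_{t,x}$ products of three factors, and finally invoke the interpolated trilinear estimate of Remark \ref{rmk Trilinear} on each triple.

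By Proposition \ref{prop Duality X},
\begin{align*}
\norm{\prod_{k=1}^5 \widetilde{u}_k}_{N(I)} \lesssim \sup_{\norm{v}_{Y^{-\frac{1}{2}}(I)} = 1}\, \abs{\int_I \int_{\T^2} v \prod_{k=1}^5 \widetilde{u}_k \,dx\,dt}.
\end{align*}
Writing $v = \sum_{N_0} P_{N_0} v$ and $u_k = \sum_{N_k} P_{N_k} u_k$, and using the symmetry of the claim in $u_1,\ldots,u_5$, I may restrict to $N_1 \geq N_2 \geq \cdots \geq N_5$. Fourier support considerations force the integrand to vanish unless $N_0 \lesssim N_1$, and require the two largest frequencies in $\{N_0, N_1,\ldots,N_5\}$ to be comparable; in particular, $N_0 \sim N_1$ in the principal regime.

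I then group factors into two frequency-decreasing triples, $(P_{N_0} v, P_{N_2}\widetilde{u}_2, P_{N_4}\widetilde{u}_4)$ and $(P_{N_1}\widetilde{u}_1, P_{N_3}\widetilde{u}_3, P_{N_5}\widetilde{u}_5)$, and apply Cauchy--Schwarz in $L^2_{t,x}$. To each resulting $L^2_{t,x}$ norm I apply Remark \ref{rmk Trilinear}, placing the frequency-maximal member in $Y^0$ and the other two in $Z$, which yields a bound of the shape
\begin{align*}
(N_2 N_3 N_4 N_5)^{\frac{1}{2}}\, \Gamma \cdot \norm{P_{N_0} v}_{Y^0}\, \norm{P_{N_1}\widetilde{u}_1}_{Y^0} \prod_{k=2}^{5} \norm{P_{N_k}\widetilde{u}_k}_{Z},
\end{align*}
with $\Gamma$ a product of two gain factors of the form $\bigl(N_{\text{small}}/N_{\text{big}} + 1/N_{\text{mid}}\bigr)^{\delta'}$. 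To pull out the $X^{\frac{1}{2}}$-norm of one distinguished factor $u_j$ and the $Z'$-norms of the other four, I use the embedding $X^{\frac{1}{2}} \hookrightarrow Y^{\frac{1}{2}}$ to write $\norm{P_N u}_{Y^0} \lesssim N^{-1/2} c_N \norm{u}_{X^{\frac{1}{2}}}$ with $\{c_N\} \in \ell^2$ of norm $\lesssim 1$, the normalization $\norm{v}_{Y^{-\frac{1}{2}}} = 1$ (giving $\norm{P_{N_0} v}_{Y^0} \lesssim N_0^{1/2} c_{N_0}$ with $\{c_{N_0}\} \in \ell^2$), and the interpolation $Z' = Z^{1/2} (X^{\frac{1}{2}})^{1/2}$ to trade $Z$-norms for $Z'$-norms at the price of half an $X^{\frac{1}{2}}$-norm on each factor. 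The strictly positive gain $\Gamma$, together with $N_0 \sim N_1$, supplies genuine decay in the ratios $N_k/N_1$, making the fivefold sum over $(N_1,\ldots,N_5)$ convergent by Schur's test against the $\ell^2$, $\ell^{p_0}$, and $\ell^{p_1}$ sequences associated with the ambient norms. Finally, symmetrising over the choice of $j$ recovers the main inequality. For the refinement \eqref{lem3.8 2}, the hypotheses $u_k = P_{>A} u_k$ ($k=1,\ldots,4$) and $u_5 = P_{<B} u_5$ allow one to organise the Cauchy--Schwarz decomposition so that $u_5$ is always the lowest-frequency member of its triple; $u_5$ can therefore always be placed in a $Z$-slot of the trilinear estimate (never in the $Y^0$-slot), so $\norm{u_5}_{Z'}$ factors out of the sum, leaving only the advertised four terms.

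The main technical obstacle is the bookkeeping in the frequency summation: one must carefully distribute the polynomial weights $(N_k)^{1/2}$ and the gain $\Gamma$ among the five norms so that the exponents align with the homogeneity of $Z' = Z^{1/2}(X^{\frac{1}{2}})^{1/2}$, and then verify that the strictly positive $\delta'$ from Remark \ref{rmk Trilinear} beats the otherwise borderline-logarithmic summation. The specific choice $2/p_0 + 1/p_1 = 1/2$ built into the definition of $Z$ is precisely what makes the $L^2$ exponent arising from the Cauchy--Schwarz split align with the two $Z$-norms carried by the ``other'' factors of each triple.
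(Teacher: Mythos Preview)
Your approach is essentially identical to the paper's: duality via Proposition~\ref{prop Duality X}, dyadic decomposition of all six factors, Cauchy--Schwarz into two trilinear $L^2_{t,x}$ pieces, and then Remark~\ref{rmk Trilinear} on each triple, with the decay $\Gamma$ closing the frequency sums. The refinement \eqref{lem3.8 2} is handled exactly as you say.

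There is one genuine imprecision. You say ``in particular, $N_0 \sim N_1$ in the principal regime'' and then place ``the frequency-maximal member in $Y^0$'' for each triple. But the complementary regime $N_1 \sim N_2 \gtrsim N_0$ is not subsumed by this description: in the triple $(P_{N_0}v,\,P_{N_2}\widetilde u_2,\,P_{N_4}\widetilde u_4)$ the frequency-maximal member is now $P_{N_2}\widetilde u_2$, so your rule would put $v$ in a $Z$-slot, and $v$ is only controlled in $Y^{-1/2}$, not in $Z$. The paper resolves this by invoking the \emph{second} form of Remark~\ref{rmk Trilinear}, which allows the $Y^0$-norm to sit on the middle-frequency factor (here $P_{N_0}v$ or $P_{N_4}\widetilde u_4$, whichever is larger), keeping $v$ in $Y^0$ throughout. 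Once you add this second case---with the gain factor $\bigl(\tfrac{\min(N_0,N_4)}{N_2}+\tfrac{1}{\max(N_0,N_4)}\bigr)^{\delta'}$---the argument goes through exactly as in the first case, summing $N_0$ against $N_1$ (or $N_2$) via Cauchy--Schwarz in $\ell^2$.
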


\begin{proof}[Proof of Lemma \ref{lem Nonlinear est}]
This proof is adapted from Lemma 3.2  in \cite{IP1} and Lemma 3.2 in \cite{IP2}.

Let $N \geq 1$ be given. By Proposition \ref{prop Duality X}, we have that
\begin{align*}
\norm{P_{\leq N} \prod_{i=1}^5 \widetilde{u}_k}_{N(I)} \leq \sup_{  v \in Y^{- \frac{1}{2} } (I) ;  \, \norm{v}_{Y^{-\frac{1}{2}}} =1} \int_0^{2 \pi} \int_{\T^2} P_{\leq N} \prod_{k=1}^5 \widetilde{u}_k \overline{v} \, dxdt .
\end{align*}
We denote $u_0 = P_{\leq N} v$. Then we need to prove the multilinear estimate
\begin{align}\label{eq nonlinear}
\abs{\int_{I \times \T^2} \prod_{k=0}^5  \wt{u}_k  \, dxdt } \lesssim \norm{u_0}_{Y^{-\frac{1}{2}} (I)}   \norm{u_1}_{X^{\frac{1}{2}} (I)} \prod_{k=2}^5 \norm{u_k}_{Z' (I)}
\end{align}
Once \eqref{eq nonlinear} is obtained, the claimed estimate follows by letting $N \to \infty$. We consider extensions to $\R$ of $u_k$, which we also denote with $u_k$ in the sequel of the proof, $k= 0, \cdots,5$ and \eqref{eq nonlinear}  reduces to 
\begin{align*}
\abs{\int_{I \times \T^2} \prod_{k=0}^5  \wt{u}_k  \, dxdt } \lesssim \norm{u_0}_{Y^{-\frac{1}{2}} (I)}   \norm{u_1}_{X^{\frac{1}{2}} (I)} \prod_{k=2}^5 \norm{u_k}_{Z' (I)}
\end{align*}
To prove this, we dyadically decompose 
\begin{align*}
\wt{u_k} = \sum_{N_k} P_{N_k} \wt{u}_k .
\end{align*}
and after relabeling, we need to estimate
\begin{align*}
S = \sum_{\underline{N}} \norm{P_{N_1} u_1 P_{N_3} u_3 P_{N_5} u_5}_{L_{t,x}^2 (I \times \T^2)} \norm{P_{N_0} u_0 P_{N_2} u_2 P_{N_4} u_4}_{L_{t,x}^2 (I \times \T^2)}
\end{align*}
where we denote by $\underline{N} = ( N_0, N_1,\cdots ,N_5)$ the $6$-tuple of $2^n$ numbers, $ n \in \N$ satisfying
\begin{align*}
N_1 \sim \max \{N_2, N_0 \} \geq N_2 \geq N_3 \geq N_4 \geq N_5 .
\end{align*}
Then we will consider the following two cases:
\begin{enumerate}[$-$]
\item
{\bf Case 1}: the $6$-tuple $\underline{N_1} = ( N_0, N_1,\cdots ,N_5)$ such that $N_1 \sim N_0 \gtrsim N_2$
\item
{\bf Case 2}: the $6$-tuple $\underline{N_2} = ( N_0, N_1,\cdots ,N_5)$ such that $N_1 \sim N_2 \gtrsim N_0$
\end{enumerate}

We first consider the case $N_0 \sim N_1$. 
By interpolating Remark \ref{rmk Trilinear}, for some $0<\delta'<\delta$ we have that
\begin{align*}
S_1 & \lesssim  \sum_{\underline{N_1}}  \bigg(\frac{N_5}{N_1} + \frac{1}{N_3}\bigg)^{\delta'} \bigg(\frac{N_4}{N_0} + \frac{1}{N_2}\bigg)^{\delta'} \norm{P_{N_0} u_0}_{Y^0 (I)} \norm{P_{N_1} u_1}_{Y^0 (I)} \prod_{j=2}^5 \norm{P_{N_j} u_j}_{Z'  (I)} \\
& \lesssim  \norm{u_0}_{Y^{-\frac{1}{2}} (I)} \norm{u_1}_{X^{\frac{1}{2}} (I)} \prod_{j=2}^5 \norm{u_j}_{Z'  (I)}.
\end{align*}

Then we consider the case $N_0 \leq N_2$. 
Similarly by interpolating Remark \ref{rmk Trilinear}, for some $0<\delta'<\delta$ we have that
\begin{align*}
S_2 & \lesssim  \sum_{\underline{N_2}}  \bigg(\frac{N_5}{N_1} + \frac{1}{N_3}\bigg)^{\delta'} \bigg(\frac{\min(N_0,N_4)}{N_2} + \frac{1}{\max(N_0, N_4)}\bigg)^{\delta'}\cdot \norm{P_{N_0} u_0}_{Y^0 (I)} \norm{P_{N_1} u_1}_{Y^0 (I)} \prod_{j=2}^5 \norm{P_{N_j} u_j}_{Z'  (I)} \\
& \lesssim  \norm{u_0}_{Y^{-\frac{1}{2}} (I)} \norm{u_1}_{X^{\frac{1}{2}} (I)} \prod_{j=2}^5 \norm{u_j}_{Z'  (I)}.
\end{align*}

In particular, if there exist constants $A,B>0$, such that $u_k = P_{>A} u_k$ for $k=1,2,3, 4$ and $u_5=P_{<B} u_5$, then without loss of generality, we only need to consider $N_1, N_2, N_3, N_4\gtrsim N_5$, so we can prove (\ref{lem3.8 2}) by using a similar way (but $u_5$ is in the low frequency position and hence $u_5$ is in $Z'$-norm for all cases).

Now we finish the proof of Lemma \ref{lem Nonlinear est}.
\end{proof}

\subsection{Local well-posedness}
\begin{prop}[Local well-posedness]\label{prop LWP}
\begin{enumerate}[\bf (1)]
\item
Given $E >0$, there exists $\delta_0 = \delta_0 (E) > 0$ such that if $\norm{\phi}_{H_x^{\frac{1}{2}} (\T^2)} \leq E$ and 
\begin{align*}
\norm{e^{it\Delta} \phi}_{Z'(I)} \leq \delta_0
\end{align*}
on some interval $I \ni 0$, $\abs{I} \leq 1$, then there exists a unique solution $u \in X^{\frac{1}{2}} (I)$ of \eqref{NLS} satisfying $u(0) = \phi$. Besides,
\begin{align*}
\norm{u - e^{it\Delta} \phi}_{X^{\frac{1}{2}}(I)} \lesssim _E \norm{e^{it\Delta} \phi}_{Z'(I)}^3 .
\end{align*}
The quantity $E(u) = \norm{u}_{L_t^{\infty} \dot{H}_x^{\frac{1}{2}} (I \times \T^2)}$ is bounded on $I$.

\item
If $u \in X^{\frac{1}{2}} (I)$ is a solution of \eqref{NLS} on some interval $I$ and 
\begin{align*}
\norm{u}_{Z(I)} < + \infty,
\end{align*}
then $u$ can be extended as a nonlinear solution to a neighborhood of $\bar{I}$ and
\begin{align*}
\norm{u}_{X^{\frac{1}{2}}(I)} \leq C(E(u), \norm{u}_{Z(I)})
\end{align*}
for some constant $C$ depending on $E(u)$ and $\norm{u}_{Z(I)}$.
\end{enumerate}
\end{prop}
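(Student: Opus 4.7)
The plan splits along the two parts of the statement.

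\textbf{Part (1).} I would run a standard Picard iteration for the Duhamel map
\begin{align*}
\Phi(u)(t) := e^{it\Delta}\phi - i\int_0^t e^{i(t-s)\Delta}\bigl(\abs{u}^4 u\bigr)(s)\, ds
\end{align*}
on the ball $B_\eta := \{u \in X^{\frac{1}{2}}(I) : \norm{u - e^{it\Delta}\phi}_{X^{\frac{1}{2}}(I)} \leq \eta\}$. Two inputs are needed. First, $\norm{e^{it\Delta}\phi}_{X^{\frac{1}{2}}(I)} = \norm{\phi}_{H^{\frac{1}{2}}} \leq E$, since the Fourier transform of $e^{-it\Delta}(e^{it\Delta}\phi)$ is constant in $t$ and hence a single $U^2$-atom. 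Second, $\norm{v}_{Z'(I)} \lesssim \norm{v}_{X^{\frac{1}{2}}(I)}$, which is immediate from Remark \ref{rmk Z<X} combined with $\norm{v}_{Z'} = \norm{v}_Z^{\frac{1}{2}} \norm{v}_{X^{\frac{1}{2}}}^{\frac{1}{2}}$. Together these give $\norm{u}_{Z'(I)} \leq \delta_0 + C\eta$ and $\norm{u}_{X^{\frac{1}{2}}(I)} \leq E + \eta$ for every $u \in B_\eta$. Applying Lemma \ref{lem Nonlinear est} with all five factors equal to $u$ (up to conjugation) gives
\begin{align*}
\norm{\Phi(u) - e^{it\Delta}\phi}_{X^{\frac{1}{2}}(I)} = \norm{\abs{u}^4 u}_{N(I)} \lesssim \norm{u}_{X^{\frac{1}{2}}}\norm{u}_{Z'}^4 \lesssim (E+\eta)(\delta_0 + C\eta)^4,
\end{align*}
and choosing $\eta = C_E \delta_0^3$ and $\delta_0 = \delta_0(E)$ small makes this bounded by $\eta$, so $\Phi(B_\eta) \subset B_\eta$. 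For contraction, I would expand $\abs{u}^4 u - \abs{v}^4 v$ as a sum of quintic monomials in $u, v, \bar u, \bar v$ each carrying a single factor $u-v$, and reapply Lemma \ref{lem Nonlinear est} to obtain a Lipschitz constant $\lesssim_E \delta_0^3 \ll 1$. Banach's theorem then delivers existence, uniqueness in $B_\eta$, and the claimed estimate $\norm{u-e^{it\Delta}\phi}_{X^{\frac{1}{2}}(I)} \lesssim_E \delta_0^3$; the embedding $X^{\frac{1}{2}} \hookrightarrow C_t H^{\frac{1}{2}}$ from Remark \ref{rmk Embed} gives the $L^\infty_t \dot H^{\frac{1}{2}}$ bound.

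\textbf{Part (2).} Using $\norm{u}_{Z'}^4 = \norm{u}_Z^2 \norm{u}_{X^{\frac{1}{2}}}^2$, the nonlinear estimate rewrites as
\begin{align*}
\norm{u}_{X^{\frac{1}{2}}(J)} \leq \norm{u(t_0)}_{H^{\frac{1}{2}}} + C \norm{u}_{X^{\frac{1}{2}}(J)}^3 \norm{u}_{Z(J)}^2
\end{align*}
on any $J \ni t_0$ of length $\leq 1$. The plan is to choose $\eta = \eta(E(u))$ small enough that $64C E(u)^2 \eta^2 \leq 1$, partition $I$ into $K$ subintervals $\{I_j\}_{j=1}^K$ of length $\leq 1$ with $\norm{u}_{Z(I_j)} \leq \eta$, and bootstrap on each piece (using continuity of $t \mapsto \norm{u}_{X^{\frac{1}{2}}([t_{j-1},t])}$ starting from $\norm{u(t_{j-1})}_{H^{\frac{1}{2}}} \leq E(u)$) to get $\norm{u}_{X^{\frac{1}{2}}(I_j)} \leq 2 E(u)$. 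Such a partition with $K = K(E(u), \norm{u}_{Z(I)})$ exists because for a unit-length subinterval $I'$ one has
\begin{align*}
\norm{u}_{Z(I')} = \sum_{p\in\{p_0,p_1\}}\Bigl(\sum_N N^{4-\frac{p}{2}}\norm{P_N u}_{L^p_{t,x}(I'\times\T^2)}^p\Bigr)^{\frac{1}{p}},
\end{align*}
which is additive to the $p$-th power over time partitions; a greedy procedure therefore terminates after $K \lesssim (\norm{u}_{Z(I)}/\eta)^{p_1}$ steps. Finally, subadditivity of $U^2$-norms over disjoint time intervals and the $\ell^2$ frequency structure of $X^{\frac{1}{2}}$ give $\norm{u}_{X^{\frac{1}{2}}(I)}^2 \leq \sum_j \norm{u}_{X^{\frac{1}{2}}(I_j)}^2 \leq 4 K E(u)^2$, and Part (1) applied at an endpoint extends $u$ to a neighborhood of $\bar I$.

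\textbf{Main obstacle.} The delicate bookkeeping is in Part (2): one must verify, from the $U^2$-atomic definitions, both the continuity and the correct $t\to t_0^+$ behavior of $\norm{u}_{X^{\frac{1}{2}}([t_0,t])}$ (so that the bootstrap starts below the threshold $4E(u)$), together with the partition subadditivity $\norm{u}_{X^{\frac{1}{2}}(I)}^2 \leq \sum_j \norm{u}_{X^{\frac{1}{2}}(I_j)}^2$. Once these atomic-space facts are in hand, Lemma \ref{lem Nonlinear est} encapsulates all the trilinear Strichartz input and the remainder is algebra.
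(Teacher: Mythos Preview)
Your Part~(1) is essentially the paper's argument: both run a contraction mapping using Lemma~\ref{lem Nonlinear est}, and the only difference is cosmetic (you center the ball at the free evolution and track one norm, while the paper uses the set $\{\|u\|_{X^{\frac{1}{2}}}\le 2E,\ \|u\|_{Z'}\le a\}$). One small point: to get the stated bound $\lesssim_E \|e^{it\Delta}\phi\|_{Z'(I)}^3$ rather than $\lesssim_E \delta_0^3$, you should run the iteration with the actual value $\delta:=\|e^{it\Delta}\phi\|_{Z'(I)}$ in place of $\delta_0$, as the paper does.

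Your Part~(2), however, takes a genuinely different route. You bootstrap the quantity $\|u\|_{X^{\frac{1}{2}}(J)}$ directly on each small-$Z$ subinterval and then try to reassemble. The paper instead bootstraps
\[
h(s)=\bigl\|e^{i(t-T_1)\Delta}u(T_1)\bigr\|_{Z'(T_1,T_1+s)},
\]
feeding the output of Part~(1), namely $\|u-e^{i(t-T_1)\Delta}u(T_1)\|_{X^{\frac{1}{2}}}\le h(s)^3$, back into the definition $Z'=Z^{1/2}(X^{\frac{1}{2}})^{1/2}$ to obtain the closed inequality $h(s)\le(\varepsilon+Ch(s)^3)^{1/2}E(u)^{1/2}$. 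This shows the Part~(1) hypothesis persists all the way to the endpoint and beyond, with no need to patch atomic norms across a time partition. Your approach is workable, but the claimed $\ell^2$ subadditivity $\|u\|_{X^{\frac{1}{2}}(I)}^2\le\sum_j\|u\|_{X^{\frac{1}{2}}(I_j)}^2$ is not a general property of $U^2$-based restriction norms; concatenating atomic decompositions only gives the $\ell^1$ bound $\|u\|_{X^{\frac{1}{2}}(I)}\le\sum_j\|u\|_{X^{\frac{1}{2}}(I_j)}$. That still yields $\|u\|_{X^{\frac{1}{2}}(I)}\le 2K\,E(u)$ with $K=K(E(u),\|u\|_{Z(I)})$, so your conclusion survives, but you should replace the square-summability claim. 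The advantage of the paper's route is precisely that it never needs to combine $X^{\frac{1}{2}}$-norms over a partition; the advantage of yours is that it gives the global $X^{\frac{1}{2}}$ bound more explicitly.
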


\begin{proof}[Proof of Proposition \ref{prop LWP}]
\begin{enumerate}[\bf (1)]
\item
The proof in {\bf Part (1)} is adapted from Proposition 3.2 in \cite{IP2}.
We proceed by a standard fixed point argument. For $E ,a > 0$, we consider the space
\begin{align*}
S = \{ u \in X^{\frac{1}{2}} (I): \norm{u}_{X^{\frac{1}{2}} (I)} \leq 2 E , \norm{u}_{Z' (I)} \leq a \}
\end{align*}
and the mapping
\begin{align*}
\Phi (v) := e^{it\Delta} u_0 - i \int_0^t e^{i (t-s) \Delta} \abs{v}^4 v (s) \, ds .
\end{align*}
We then see that for $u,v \in S$, by Lemma \ref{lem Nonlinear est}
\begin{align*}
\norm{\Phi (u) - \Phi(v)}_{X^{\frac{1}{2}} (I)} & \lesssim \parenthese{ \norm{u}_{X^{\frac{1}{2}} (I)} + \norm{v}_{X^{\frac{1}{2}} (I)} } \parenthese{ \norm{u}_{Z'(I)} + \norm{v}_{Z' (I)} }^3 \norm{u-v}_{X^{\frac{1}{2}} (I)} \\
& \lesssim E a^3 \norm{u-v}_{X^{\frac{1}{2}} (I)}  .
\end{align*}
Similarly, using Lemma \ref{lem Nonlinear est}, we also obtain that
\begin{align*}
\norm{\Phi (u)}_{X^{\frac{1}{2}} (I)} & \lesssim \norm{\Phi (0)}_{X^{\frac{1}{2}} (I)} + C \norm{\Phi (u) - \Phi(0)}_{X^{\frac{1}{2}} (I)}  \lesssim \norm{u_0}_{H^{\frac{1}{2}}} + CE a^4 ,\\
\norm{\Phi (u)}_{Z' (I)} & \lesssim \norm{\Phi (0)}_{Z' (I)} + C \norm{\Phi (u) - \Phi(0)}_{X^{\frac{1}{2}} (I)}  \lesssim \delta + CE a^4  .
\end{align*}
Now, we choose $a = 2 \delta$ and we let $\delta_0 = \delta_0 (E)$ be small enough. We then see that $\Phi$ is a contraction on $S$. Hence it possesses a unique fixed point $u$. Finally, using Lemma \ref{lem Trilinear} and taking $\delta_0$ smaller if necessary,
\begin{align*}
\norm{u - e^{it\Delta} u_0}_{X^{\frac{1}{2}} (I)} \lesssim \norm{u}_{X^{\frac{1}{2}}(I) }  \norm{u}_{Z'(I)}^4 \leq E \delta^4 \leq \delta^3 .
\end{align*}
In order to justify the uniqueness in $X^{\frac{1}{2}} (I)$, assume that $u,v \in X^{\frac{1}{2}}$ satisfy $u(0) =v(0)$ and choose $E = \max \{ \norm{u}_{X^{\frac{1}{2}} (I)} , \norm{v}_{X^{\frac{1}{2}} (I)}  \}$. Then there exists a possibly smaller open interval $0 \in J$ such that $u, v \in S_J$. By uniqueness of the fixed point in $S$, $u|_J = v|_J$ and hence $\{ u=v \}$ is closed and open in $I$. This finishes the proof.

\item
The proof in  {\bf Part (2)} is adapted from Lemma 3.4 in \cite{IP2}.
It suffices to consider the case $I = (0,T)$. Choose $\varepsilon > 0 $ sufficiently small and $T_1 \geq T-1$ such that
\begin{align*}
\norm{u}_{Z'(T_1 , T)} \leq \varepsilon .
\end{align*}
Now, let
\begin{align*}
h(s) = \norm{e^{i(t -T_1)\Delta} u(T_1)}_{Z' (T_1 ,T_1 +s)} .
\end{align*}
Clearly, $h$ is continuous function of $s$ satisfying $h(0) =0$. In addition, using {\bf Part (1)}, as long as $h(s) \leq \delta_0$, we have that
\begin{align*}
\norm{u(t) - e^{i (t- T_1)\Delta} u(T_1)}_{X^{\frac{1}{2}} (T_1 , T_1 +s)} \leq h(s)^3 ,
\end{align*}
and in particular, from Duhamel formula.
\begin{align*}
\norm{ e^{i (t- T_1)\Delta} u(T_1)}_{Z(T_1 , T_1 +s)} & \leq \norm{u}_{Z (T_1 , T_1 +s)} + C \norm{u(t) - e^{i (t- T_1)\Delta} u(T_1)}_{X^{\frac{1}{2}} (T_1 , T_1 +s)} \\
& \leq \varepsilon + C h(s)^3 .
\end{align*}
On the other hand, by definition, there holds that
\begin{align*}
h(s) \leq \norm{ e^{i (t- T_1)\Delta} u(T_1)}_{Z(T_1 , T_1 +s)}^{\frac{1}{2}} \norm{ e^{i (t- T_1)\Delta} u(T_1)}_{X^{\frac{1}{2}}(T_1 , T_1 +s)}^{\frac{1}{2}} \leq (\varepsilon + C h(s)^3)^{\frac{1}{2}} \norm{u(T_1)}_{H^{\frac{1}{2}}}^{\frac{1}{2}} .
\end{align*}
Now, if we take $\varepsilon >0$ sufficiently small, we see by continuity that
\begin{align*}
h(s) \leq C \sqrt{\varepsilon} E^{\frac{1}{2}} \leq \frac{1}{2} \delta_0 .
\end{align*}
for all $s \leq T- T_1$. Consequently, there exists a larger time $T_2 > T$ such that
\begin{align*}
\norm{e^{i(t -T_1)\Delta} u(T_1)}_{Z' (T_1 ,T_2)} \leq \frac{3}{4} \delta_0  .
\end{align*}
Applying {\bf (1)}, we see that $u$ can be extended to $(0, T_2)$. 
\end{enumerate}
Now we finish the proof of Proposition \ref{prop LWP}.
\end{proof}

\subsection{Stability}
\begin{prop}[Stability]\label{prop Stability}
Assume that $I$ is an open bounded interval, $\rho \in \{0, 1\}$, and $\widetilde{u} \in X^{\frac{1}{2}} (I)$ satisfies the approximate Schr\"odinger equation
\begin{align}\label{eq Approx NLS}
(i\partial_t + \Delta_{\T^2}) \widetilde{u} = \rho \abs{\widetilde{u}}^4 \widetilde{u} + e \quad \text{ on } I \times \T^2.
\end{align}
Assume in addition that
\begin{align*}
\norm{\widetilde{u}}_{Z(I)} + \norm{\widetilde{u}}_{L_t^{\infty} (I , H^{\frac{1}{2}} (\T^2))} 
\leq M ,
\end{align*}
for some $M \in [0,\infty)$. Assume that $t_0 \in I$, and assume that $u_0 \in H^{\frac{1}{2}}(\T^2)$ such that the smallness condition
\begin{align*}
\norm{u_0 -\widetilde{u}(t_0)}_{H^{\frac{1}{2}} (\T^2)} + \norm{e}_{N(I)} \leq \varepsilon
\end{align*}
holds for some $0 < \varepsilon < \varepsilon_1$, where $\varepsilon_1 \leq 1$ is a small constant $\varepsilon_1 = \varepsilon_1 (M) >0$.

Then there exists a strong solution $u \in X^{\frac{1}{2}} (I)$ of the Schr\"odinger equation
\begin{align}\label{eq NLS}
(i\partial_t + \Delta_{\T^2}) u = \rho \abs{u}^4 u ,
\end{align}
such that $u(t_0) = u_0$ and
\begin{align*}
\norm{u}_{X^{\frac{1}{2}}(I)} + \norm{\widetilde{u}}_{X^{\frac{1}{2}} (I)} \leq C(M) ,\\
\norm{u-\widetilde{u}}_{X^{\frac{1}{2}}(I)} \leq C(M)\varepsilon .
\end{align*}

\end{prop}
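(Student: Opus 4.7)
The plan is to carry out a standard perturbation argument: subdivide $I$ into finitely many pieces on which $\widetilde u$ has small $Z$-norm, solve for the difference $w := u - \widetilde u$ by a contraction on each piece, and then iterate the estimates across consecutive pieces. The case $\rho = 0$ reduces to linear theory and is trivial, so I focus on $\rho = 1$.

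The first preparatory step is to upgrade the hypothesis to an a priori bound $\norm{\widetilde u}_{X^{\frac{1}{2}}(I)} \le C(M)$. This is obtained by repeating the continuity argument in Part (2) of Proposition \ref{prop LWP} applied to $\widetilde u$: the Duhamel formula now contains an extra source term $-i \int_{t_0}^{t} e^{i(t-s)\Delta} e(s) \, ds$ whose $X^{\frac{1}{2}}$-norm is controlled by $\norm{e}_{N(I)} \le \varepsilon \le \varepsilon_1$ via the embedding \eqref{eq Embed}, and this small inhomogeneity does not disrupt the bootstrap. I then partition $I = \bigcup_{j=0}^{K-1} I_j$, with $t_0$ placed at one endpoint, into closed subintervals $I_j = [t_j, t_{j+1}]$ satisfying $\abs{I_j} \le 1$ and
\begin{align*}
\norm{\widetilde u}_{Z(I_j)} \le \eta
\end{align*}
for a parameter $\eta = \eta(M) > 0$ to be chosen; since the $Z$-norm is defined through $L_t^{p_0}$ and $L_t^{p_1}$ mixed norms, it is absolutely continuous in the underlying interval, so such a partition exists with $K = K(M, \eta, \abs{I})$ finite.

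On a single piece $I_j$, the difference $w$ formally satisfies
\begin{align*}
w(t) = e^{i(t-t_j)\Delta} w(t_j) - i \int_{t_j}^{t} e^{i(t-s)\Delta} \bigl[ \abs{\widetilde u + w}^{4} (\widetilde u + w) - \abs{\widetilde u}^{4} \widetilde u - e \bigr](s) \, ds,
\end{align*}
with $w(t_j) = u(t_j) - \widetilde u(t_j)$, which I solve by a Banach fixed point in a small ball of $X^{\frac{1}{2}}(I_j)$. Expanding the quintic difference as a sum of $5$-linear products in $\widetilde u, \bar{\widetilde u}, w, \bar w$ in which each product contains at least one factor of $w$ or $\bar w$, Lemma \ref{lem Nonlinear est} combined with the interpolation $\norm{f}_{Z'(I_j)} \le \norm{f}_{Z(I_j)}^{\frac{1}{2}} \norm{f}_{X^{\frac{1}{2}}(I_j)}^{\frac{1}{2}}$ and the preceding a priori bound on $\norm{\widetilde u}_{X^{\frac{1}{2}}(I_j)}$ yields a bound of the form
\begin{align*}
\norm{\abs{\widetilde u + w}^{4} (\widetilde u + w) - \abs{\widetilde u}^{4} \widetilde u}_{N(I_j)} \le C(M) \bigl( \eta^{\frac{1}{2}} + \norm{w}_{X^{\frac{1}{2}}(I_j)} \bigr)^{4} \norm{w}_{X^{\frac{1}{2}}(I_j)}.
\end{align*}
For $\eta = \eta(M)$ sufficiently small and $r_j := \norm{w(t_j)}_{H^{\frac{1}{2}}(\T^2)} + \norm{e}_{N(I_j)}$ sufficiently small, this produces a contraction on the ball $\{ \norm{w}_{X^{\frac{1}{2}}(I_j)} \le 2 r_j \}$, hence a unique fixed point $w$ on $I_j$ with $\norm{w}_{X^{\frac{1}{2}}(I_j)} \le C(M) r_j$.

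To finish, I iterate the preceding estimate from $j = 0$ outwards. Setting $\varepsilon_j := \norm{u(t_j) - \widetilde u(t_j)}_{H^{\frac{1}{2}}(\T^2)} + \norm{e}_{N(I)}$, the embedding $X^{\frac{1}{2}}(I_j) \hookrightarrow L_t^{\infty} H_x^{\frac{1}{2}}(I_j)$ yields $\varepsilon_{j+1} \le (1 + C(M)) \varepsilon_j$, so $\varepsilon_j \le (1 + C(M))^{K} \varepsilon$ for $j = 0, \ldots, K$; the analogous extension backward in time from $t_0$ handles the other portion of $I$. Concatenating the pieces and summing the $X^{\frac{1}{2}}(I_j)$ bounds gives the claimed $\norm{u - \widetilde u}_{X^{\frac{1}{2}}(I)} \le C(M) \varepsilon$ and $\norm{u}_{X^{\frac{1}{2}}(I)} + \norm{\widetilde u}_{X^{\frac{1}{2}}(I)} \le C(M)$. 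The main obstacle is the correct ordering of constants: $\eta$ must be chosen first depending only on $M$, which determines $K$, and only then can $\varepsilon_1(M)$ be taken so small that $(1 + C(M))^{K} \varepsilon_1$ remains within the radius for which the contraction closes on every subinterval. Avoiding circularity in this hierarchy of smallness conditions is the technical heart of the argument.
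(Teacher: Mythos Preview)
Your proposal is correct and follows essentially the same approach as the paper's proof: reduce to subintervals on which $\widetilde u$ has small $Z$-norm, control the difference $w=u-\widetilde u$ on each piece via the nonlinear estimate (Lemma \ref{lem Nonlinear est}) and a contraction/continuity argument, then iterate across the partition with the exponential loss absorbed into $\varepsilon_1(M)$. The only organizational difference is that you first obtain the global bound $\norm{\widetilde u}_{X^{1/2}(I)}\le C(M)$ and then partition, whereas the paper establishes an intermediate claim that smallness of $\norm{\widetilde u}_{Z(I_k)}$ forces smallness of $\norm{e^{i(t-T_k)\Delta}\widetilde u(T_k)}_{Z'(I_k)}$ and recovers the $X^{1/2}$ bound on $\widetilde u$ locally on each $I_k$; the ingredients and the hierarchy of smallness constants you describe are identical.
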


\begin{proof}[Proof of Proposition \ref{prop Stability}]
This proof is adapted from Proposition 3.4 in \cite{IP1} and Proposition 3.5 in \cite{IP2}.

Without loss of generality, we may assume that $\abs{I} \leq 1$. We proceed in several steps.
First we need the following variation on the local existence result. We claim that there exists $\delta_1 = \delta_1 (M)$ such that if, for some interval $J \ni t_0$,
\begin{align*}
\norm{e^{i (t- t_0)\Delta} \wt{u}(t_0)}_{Z' (J)} + \norm{e}_{N(J)} \leq \delta_1,
\end{align*} 
then there exists a unique solution $\wt{u}$ of \eqref{eq Approx NLS} on $J$ and
\begin{align*}
\norm{\wt{u} - e^{i(t-t_0) \Delta} \wt{u} (t_0)}_{X^{\frac{1}{2}} (J)} \leq \norm{e^{i(t-t_0) \Delta} \wt{u} (t_0)}_{Z'(J)}^3 + 2 \norm{e}_{N(J)}.
\end{align*}
The proof is very similar to the proof of {\bf Part (1)} in Proposition \ref{prop LWP}.

Now we claim that there exists $\varepsilon_1 = \varepsilon_1 (M)$ such that if the inequalities 
\begin{align}\label{eq Small1}
\begin{aligned}
& \norm{e}_{N(I_k)} \leq \varepsilon_1 , \\
& \norm{\wt{u}}_{Z(I_k)} \leq \varepsilon \leq \varepsilon_1 ,
\end{aligned}
\end{align}
hold on $I_k = ( T_k ,T_{k+1} )$, then
\begin{align}\label{eq Claim1}
\begin{aligned}
& \norm{e^{i(t-t_0) \Delta} \wt{u} (T_k)}_{Z' (I_k)} \leq C (1 +M ) ( \varepsilon + \norm{e}_{N(I_k)})^{\frac{1}{2}} ,\\
&\norm{\wt{u}}_{Z' (I_k)} \leq C (1 +M ) ( \varepsilon + \norm{e}_{N(I_k)})^{\frac{1}{2}} .
\end{aligned}
\end{align}
Define $h(s) = \norm{e^{i(t-T_k) \Delta} \wt{u}(T_k)}_{Z' (T_k ,T_k +s)}$. Let $J_k = [ T_k, T') \subset I_k$ be the largest interval such that $h(s) \leq \frac{1}{2} \delta_1$, where $\delta_1 = \delta_1 (M)$ is as in the claim above. Then on the one hand, we see from Duhamel formula that
\begin{align*}
\norm{e^{i(t - T_k) \Delta} \wt{u}(T_k)}_{Z(T_k ,T_k + s)} & \leq \norm{\wt{u}}_{Z(T_k, T_k +s)} +  \norm{\wt{u} - e^{i(t-T_k) \Delta} \wt{u}(T_k)}_{X^{\frac{1}{2}} (T_k, T_k +s)} \\
& \leq \varepsilon + h(s)^3 + 2 \norm{e}_{N(I_k)}.
\end{align*}
On the other hand, we also have that
\begin{align*}
h(s) & \leq \norm{e^{i(t - T_k) \Delta} \wt{u}(T_k)}_{Z(T_k ,T_k + s)}^{\frac{1}{2}} \norm{e^{i(t - T_k) \Delta} \wt{u}(T_k)}_{X^{\frac{1}{2}} (T_k ,T_k + s)}^{\frac{1}{2}} \\
& \leq \parenthese{\varepsilon + h(s)^3 + 2 \norm{e}_{N(I_k)}}^{\frac{1}{2}} M^{\frac{1}{2}} \\
& \leq C (1 + M) (\varepsilon + \norm{e}_{N(I_k)})^{\frac{1}{2}}) + C(1+M)h(s)^{\frac{3}{2}} .
\end{align*}
The claim \eqref{eq Claim1} follows provided that $\varepsilon_1$ is chosen small enough.

Now we consider an interval $I_k = (T_k ,T_{k+1})$ on which we assume that
\begin{align}\label{eq Small2}
\begin{aligned}
&\norm{e^{i(t -T_k) \Delta}\wt{u} (T_k)}_{Z' (I_k)} \leq \varepsilon \leq \varepsilon_0 ,\\
&\norm{\wt{u}}_{Z' (I_k)} \leq \varepsilon \leq \varepsilon_0 ,\\
&\norm{e}_{N(I_k)} \leq \varepsilon_0 . 
\end{aligned}
\end{align}
for some constant $\varepsilon_0$ sufficiently small. We can control $\wt{u}$ on $I_k$ as follows
\begin{align*}
\norm{\wt{u}}_{X^{\frac{1}{2}}(I_k)} \leq \norm{e^{i(t-T_k) \Delta}\wt{u} (T_k)}_{X^{\frac{1}{2}}(I_k)}  + \norm{\wt{u} - e^{i(t-T_k) \Delta}\wt{u} (T_k)}_{X^{\frac{1}{2}}(I_k)}  \leq M+1 .
\end{align*}
Now, we let $u$ be an exact strong solution of \eqref{eq NLS} defined on an interval $J_u$ such that
\begin{align*}
a_k = \norm{\wt{u} (T_k) -u(T_k)}_{H^{\frac{1}{2}} (\T^2)} \leq \varepsilon_0 ,
\end{align*}
and we let $J_k = [T_k ,T_k +s] \cap I_k \cap J_u$ be the maximal interval such that
\begin{align}\label{eq omega1}
\norm{\omega}_{Z'(J_k)} \leq 10 C \varepsilon_0 \leq \frac{1}{10 (M+1)} ,
\end{align}
where $\omega : = u - \wt{u}$. Such an interval exists and is nonempty since $s \mapsto \norm{\omega}_{Z'(T_k , T_k +s)} $ is finite and continuous on $J_u$ and vanishes for $s=0$. Then we see that $\omega$ solves
\begin{align*}
(i \partial_t  + \Delta ) \omega = \rho ( \abs{\wt{u} +\omega}^4 (\wt{u} +\omega) - \abs{\wt{u}}^4 \wt{u})-e ,
\end{align*}
and consequently, using Lemma \ref{lem Nonlinear est}, we get that
\begin{align}\label{eq omega2}
\norm{\omega}_{X^{\frac{1}{2}} (J_k)} & \leq \norm{e^{i(t-T_k)\Delta} (u(T_k) - \wt{u}(T_k))}_{X^{\frac{1}{2}} (J_k)} + \norm{ \abs{\wt{u} +\omega}^4 (\wt{u} +\omega) - \abs{\wt{u}}^4 \wt{u}}_{N(J_k)} + \norm{e}_{N(J_k)} \notag\\
& \leq \norm{u(T_k) - \wt{u}(T_k)}_{H^{\frac{1}{2}}} + C \norm{\omega}_{X^{\frac{1}{2}}(J_k)} ( \norm{\wt{u}}_{X^{\frac{1}{2}}(J_k)} \norm{\wt{u}}_{Z'(J_k)}^3 +  \norm{\omega}_{X^{\frac{1}{2}}(J_k)} \norm{\omega}_{Z'(J_k)}^3) + \norm{e}_{N(J_k)} \notag\\
& \leq \norm{u(T_k) - \wt{u}(T_k)}_{H^{\frac{1}{2}}} + C \varepsilon_0  \norm{\omega}_{X^{\frac{1}{2}}(J_k)} + \norm{e}_{N(J_k)} .
\end{align}
where the last line holds due to \eqref{eq omega1}. Consequently,
\begin{align*}
\norm{\omega}_{Z'(J_k)} \leq C \norm{\omega}_{X^{\frac{1}{2}}(J_k)} \leq 4 C  (\norm{u(T_k) - \wt{u}(T_k)}_{H^{\frac{1}{2}}} + \norm{e}_{N(J_k)}) \leq 8 C \varepsilon_0 .
\end{align*}
provided that $\varepsilon_0$ is small enough. Consequently $J_k = I_k \cap J_u$ and \eqref{eq omega2} holds on $I_k \cap J_u$. Therefore, it follows from (ii) in Proposition \ref{prop LWP} that the solution $u$ can be extended to the entire interval $I_k$, and the bounds  \eqref{eq omega1} and  \eqref{eq omega2} hold with $J_k = I_k$.

Now we can finish the proof. We take $\varepsilon_2 (M ) \leq \varepsilon_1 (M)$ sufficiently small and split $I$ into $N = O (\norm{\wt{u}}_{Z(I)} / \varepsilon_2)^6$ intervals such that
\begin{align*}
&\norm{\wt{u}}_{Z(I_k)} \leq \varepsilon_2 ,\\
&\norm{e}_{N(I_k)} \leq \kappa \varepsilon_2 .
\end{align*}
Then, on each interval, we have that \eqref{eq Small1} holds, so that we also have from \eqref{eq Claim1} that \eqref{eq Small2} holds. As a consequence, the bounds  \eqref{eq omega1} and  \eqref{eq omega2} both hold on each interval $I_k$. The conclusion of  Proposition \ref{prop Stability} follows.
\end{proof}

\section{Euclidean profiles}\label{sec Euclidean profiles}
In this section, we  will study the behaviors of  Euclidean profiles and their corresponding  nonlinear profiles (obtained by evolving the Euclidean profiles along the NLS flow as initial data).

We fix a spherically symmetric function $\eta \in C_0^{\infty} (\R^2)$ supported in the ball of radius $2$ and equal to $1$ in the ball of radius $1$. Given $\phi \in \dot{H}^{\frac{1}{2}} (\R^2)$ and a real number $N \geq 1$ we define
\begin{align}\label{eq Euclidean profile}
\begin{aligned}
Q_N \phi \in H^{\frac{1}{2}} (\R^2) , &\qquad (Q_N \phi)(x) = \eta(\frac{x}{N^{\frac{1}{2}}}) \phi (x),\\
\phi_N   \in H^{\frac{1}{2}} (\R^2),  & \qquad \phi_N (x)  = N^{\frac{1}{2}} (Q_N \phi) (Nx) ,\\
f_N  \in H^{\frac{1}{2}}(\T^2) , & \qquad f_N (y)  = \phi_N (\Psi^{-1} (y)),
\end{aligned}
\end{align} 
where $\Psi : \{ x \in \R^2 : \abs{x} < 1 \} \to O_0 \subset \T^2$, $\Psi (x) =x$. Thus $Q_N \phi$ is a compactly supported modification of the profile $\phi$, $\phi_N$ is an $\dot{H}^{\frac{1}{2}}$-invariant rescaling of $Q_N \phi$, and $f_N$ is the function obtained by transferring $\phi_N$ to a neighborhood of zero in $\T^2$. 

Recall that we denote
\begin{align*}
E_{\R^2} (u) = \norm{u}_{L_t^{\infty}\dot{H}_x^{\frac{1}{2}} }.
\end{align*}
Note the this $H^{\frac{1}{2}}$-norm is not the energy nor a conservation quantity in our setting. We just adopt the notation $E$ from the energy-critical scenario.

We use the main theorem of \cite{Yu} in the following form.
\begin{thm}\label{thm Yu}
Assume that $\psi \in \dot{H}^{\frac{1}{2}}(\R^2)$. Then there is a unique global solution $v \in C(\R : \dot{H}^{\frac{1}{2}} (\R^2))$ of the initial-value problem
\begin{align*}
\begin{cases}
(i\partial_t + \Delta_{\R^2}) v = \abs{v}^4 v, \\
 v(0) =\psi ,
\end{cases}
\end{align*}
and 
\begin{align*}
\norm{\abs{\nabla_{\R^2}}^{\frac{1}{2}} v}_{L_t^{q} L_x^r (\R \times \R^2)} \leq \widetilde{C} (E_{\R^2} (\psi)),
\end{align*}
where $(q,r)$ is any Strichartz admissible pair{\footnote{By {\it Strichartz admissible pair}, we mean the pair $(q,r)$ satisfying $\frac{1}{q} + \frac{1}{r} = \frac{1}{2}$ and $q \in (2,\infty]$, $r \in [2, \infty)$.}}.

Moreover, this solution scatters in the sense that there exists $\psi^{\pm \infty} \in \dot{H}^{\frac{1}{2}} (\R^2)$ such that 
\begin{align*}
\lim_{t \to \pm \infty} \norm{v(t) -e^{it\Delta_{\R^2}} \psi^{\pm \infty}}_{\dot{H}^{\frac{1}{2}} (\R^2)} = 0.
\end{align*}
Besides, if $\psi \in H^5(\R^2)$, then $v \in C(\R: H^5 (\R^2))$ and 
\begin{align*}
\sup_{t \in \R} \norm{v(t)}_{H^5(\R^2)} \lesssim_{\norm{\psi}_{H^5 (\R^2)}} 1.
\end{align*}
\end{thm}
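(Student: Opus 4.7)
The plan is to prove the theorem via the Kenig-Merle concentration-compactness/rigidity road map, adapted to two dimensions. The first step is to establish a critical local well-posedness theory and a stability lemma in $\dot{H}^{1/2}(\R^2)$ for the quintic NLS using the Strichartz estimates on $\R^2$. Because the endpoint Strichartz estimate fails in two dimensions, one cannot simply close a contraction using the standard $L_t^2 L_x^{\infty}$ pair; instead one works with non-endpoint admissible pairs together with a trilinear/atomic-space refinement, analogous in spirit to the $Z'$-norm construction of Section \ref{sec LWP}. A standard fixed-point argument then yields local well-posedness on a time interval depending on the profile of the data, small-data global well-posedness with scattering, and a perturbation lemma allowing one to upgrade sufficiently good approximate solutions to exact ones.

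Next, assume for contradiction that the global Strichartz bound fails for some data of finite $\dot{H}^{1/2}$-norm. By a Keraani-type linear profile decomposition in $\dot{H}^{1/2}(\R^2)$ (involving scaling, spatial translation, and time-translation parameters), together with the stability theory, one extracts a minimal blow-up solution $v_c$ whose orbit is pre-compact in $\dot{H}^{1/2}$ modulo these symmetries. A further reduction then shows that such a $v_c$ must fit one of the classical three scenarios: a soliton-like solution with essentially constant modulation parameters, an inward frequency cascade, or a finite-time self-similar profile.

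The rigidity step, which will be the main obstacle, must preclude each of these three scenarios. Its principal tool is a two-dimensional interaction Morawetz inequality; because the dispersive decay in $\R^2$ is only of order $|t|^{-1}$ rather than $|t|^{-d/2}$ with $d \geq 3$, the standard $|x-y|^{-1}$-weight Morawetz used in higher dimensions is not available. Instead, one must use a fractional-derivative (equivalently, logarithmic) weight adapted to the $\dot{H}^{1/2}$-scaling, in the spirit of \cite{PV,CGT1,CGT2}. This Morawetz quantity controls a weighted $L_{t,x}^4$-type norm of $|\nabla|^{1/2} v$, and combined with the almost-periodicity of $v_c$ modulo symmetries and a negative-regularity/frequency-cascade elimination argument, it forces $v_c \equiv 0$ in each scenario, contradicting minimality. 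The resulting global Strichartz bound then yields scattering by a standard Duhamel-formula argument.

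Finally, the persistence-of-regularity assertion follows by a bootstrap. Once the global critical Strichartz bound $\||\nabla_{\R^2}|^{1/2} v\|_{L_t^q L_x^r(\R \times \R^2)} \leq \widetilde{C}(E_{\R^2}(\psi))$ is in place, one partitions $\R$ into finitely many subintervals on which the critical Strichartz norm of $v$ is small; on each subinterval one applies the Duhamel formula at the $H^5$ level, uses a fractional Leibniz estimate of the form $\norm{\abs{v}^4 v}_{H^5} \lesssim \norm{v}_{L_x^{\infty}}^4 \norm{v}_{H^5}$ together with Sobolev embedding in $\R^2$ to control the $L_x^{\infty}$-factor, and iterates via Gronwall's inequality to propagate the $H^5$ bound to all of $\R$.
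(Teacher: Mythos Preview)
The paper does not prove this theorem; it quotes it as ``the main theorem of \cite{Yu}'' and uses it as a black-box input to the Ionescu--Pausader machinery. Your proposal, by contrast, sketches the actual proof of the cited result, and the architecture you give --- Kenig--Merle concentration-compactness road map, $\dot H^{1/2}$-adapted profile decomposition, two-dimensional interaction Morawetz with a fractional (Planchon--Vega / Colliander--Grillakis--Tzirakis) weight to compensate for the weak $|t|^{-1}$ dispersive decay, rigidity, and persistence of regularity via a Strichartz bootstrap --- is indeed the skeleton of \cite{Yu}.

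Two remarks on the details. First, the atomic-space refinement you invoke is unnecessary on $\R^2$: Euclidean Strichartz estimates carry no logarithmic loss, so the critical local theory closes by a standard contraction in non-endpoint Strichartz norms (e.g.\ $L_t^q \dot W_x^{1/2,r}$ with $(q,r)$ avoiding $(2,\infty)$); the $Z'$-technology of Section~\ref{sec LWP} is specific to the torus. Second, and more substantively, the result is \emph{conditional}: the constant $\widetilde C(E_{\R^2}(\psi))$ depends on the a~priori quantity $E_{\R^2}(\psi)=\sup_t\|v(t)\|_{\dot H^{1/2}}$, which is not controlled by any conservation law at this regularity. Your rigidity argument must therefore be run among solutions with uniformly bounded $\dot H^{1/2}$ norm but unbounded spacetime norm --- the minimal blow-up solution is extracted under this hypothesis, not among arbitrary $\dot H^{1/2}$ data. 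Your sketch reads as if the result were unconditional, which would be a stronger (and currently open) statement.
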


\begin{lem}\label{lem Compare Profile}
Assume that $\phi \in \dot{H}^{\frac{1}{2}} (\R^2)$, $T_0 \in (0, \infty)$, and $\rho \in \{ 0, 1\}$ are given, and define $f_N$ as in \eqref{eq Euclidean profile}. Then the following conclusions hold:
\begin{enumerate}[\bf (1)]
\item
There is $N_0 = N_0(\phi, T_0)$ sufficiently large such that for any $N \geq N_0$ there is a unique solution $U_N \in C ( (-T_0 N^{-2} , T_0 N^{-2}) : H^{\frac{1}{2}} (\T^2) )$ of the initial-value problem
\begin{align*}
(i\partial_t + \Delta_{\T^2} ) U_N = \rho \abs{U_N}^4 U_N, \quad U_N(0) = f_N.
\end{align*}
Moreover, for any $N \geq N_0$,
\begin{align*}
\norm{U_N}_{X^{\frac{1}{2}} ((-T_0 N^{-2} , T_0 N^{-2}))} \lesssim_{E_{\R^2} (\phi)} 1.
\end{align*}

\item
Assume that $\varepsilon_1 \in ( 0,1]$ is sufficiently small (depending only on $E_{\R^2} (\phi)$), $\phi'  \in H^5 (\R^2)$, and $\norm{\phi - \phi' }_{\dot{H}^{\frac{1}{2}} (\R^2)} \leq \varepsilon_1$. Let $v'  \in C(\R : H^5)$ denote the solution of the initial-value problem
\begin{align*}
(i\partial_t + \Delta_{\R^2}) v'  = \rho \abs{v' }^4 v' , \quad v' (0)= \phi' .
\end{align*}
For $R, N \geq 1$ we define
\begin{align*}
\begin{aligned}
v_R'  (t,x)  = \eta(\frac{x}{R}) v' (t,x), &\qquad  (t,x) \in (-T_0 , T_0) \times \R^2 ,\\
v_{R, N}'  (t,x) = N^{\frac{1}{2}} v_R' (Nx, N^2 t), & \qquad (t,x) \in (-T_0N^{-2} , T_0 N^{-2}) \times \R^2, \\
V_{R,N} (t,y)  = v_{R,N}'  (t , \Psi^{-1} (y) ), &\qquad  (t,y) \in  (-T_0N^{-2} , T_0 N^{-2}) \times \R^2.
\end{aligned}
\end{align*}
Then there is $R_0 \geq 1$ (depending on $T_0, \phi' $ and $\varepsilon_1$) such that, for any $R \geq R_0$,
\begin{align*}
\limsup_{N \to \infty} \norm{U_N - V_{R,N}}_{X^{\frac{1}{2}} ((-T_0 N^{-2} , T_0 N^{-2}))} \lesssim_{E_{\R^2} (\phi)} \varepsilon_1 .
\end{align*}
\end{enumerate}

\end{lem}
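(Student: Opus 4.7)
The plan is to treat $V_{R,N}$ as an approximate solution of the torus NLS and to deduce both parts of the lemma from the stability result (Proposition \ref{prop Stability}). The two parts will be handled in parallel: once the hypotheses of Proposition \ref{prop Stability} are verified with $\widetilde{u} = V_{R,N}$, $t_0 = 0$, $u_0 = f_N$, and error term $e_N := (i\partial_t + \Delta_{\T^2}) V_{R,N} - \rho |V_{R,N}|^4 V_{R,N}$, its conclusion simultaneously yields the solution $U_N$ of Part (1), the $X^{1/2}$-bound on $U_N$, and the closeness estimate of Part (2).

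First I would use Theorem \ref{thm Yu} to produce the smooth Euclidean solution $v'$ with initial data $\phi'$, whose global Strichartz norms and $L_t^\infty H_x^5$-bound depend only on $\|\phi'\|_{\dot{H}^{1/2}}$ and $\|\phi'\|_{H^5}$. The scattering of $v'$ together with the persistence of its $H^5$-bound and the smoothness of $\eta$ yields, for every $R \geq R_0(\phi', T_0, \varepsilon_1)$,
\begin{align*}
\|v'_R - v'\|_{L^\infty_t \dot{H}^{1/2}_x((-T_0, T_0) \times \R^2)} + \bigl\|(i\partial_t + \Delta_{\R^2}) v'_R - \rho |v'_R|^4 v'_R\bigr\|_{L^1_t \dot{H}^{1/2}_x((-T_0, T_0) \times \R^2)} \lesssim \varepsilon_1,
\end{align*}
the $\dot{H}^{1/2}$-control coming from the spatial tails of $v'$ and the commutator $[\eta(\cdot / R), \Delta]$ together with the tame trilinear mismatch between $|v'|^4 v' - |v'_R|^4 v'_R$ controlled by $(1 - \eta(\cdot/R)) v'$.

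Next I would transfer everything to the torus. Because $v'_{R,N}(t, \cdot)$ is spatially supported in $\{|x| \leq 2R/N\}$, taking $N$ larger than some $N_0(R)$ places this support well inside the chart $O_0$, where $\Psi$ is the identity and $\Delta_{\T^2} = \Delta_{\R^2}$. Consequently $e_N$ is merely the Euclidean error of $v'_R$ transferred by the parabolic rescaling $(x,t) \mapsto (Nx, N^2 t)$, which preserves the $\dot{H}^{1/2}_x$-norm and maps $L^1_t \dot{H}^{1/2}_x$ on $(-T_0, T_0)$ to $L^1_t \dot{H}^{1/2}_x$ on $I_N := (-T_0 N^{-2}, T_0 N^{-2})$; together with the embedding \eqref{eq Embed} this yields $\|e_N\|_{N(I_N)} \lesssim \varepsilon_1$. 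The same rescaling argument carries the global Strichartz bounds of $v'$ on $\R^2$ into uniform (in $N$) bounds on the $L^p_{t,x}$ building blocks of $\|V_{R,N}\|_{Z(I_N)}$ and on $\|V_{R,N}\|_{L^\infty_t H^{1/2}_x(I_N \times \T^2)}$, producing the constant $M$ required by Proposition \ref{prop Stability}.

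Finally I would check the initial-data closeness $\|f_N - V_{R,N}(0)\|_{H^{1/2}(\T^2)} \lesssim \varepsilon_1$: unwinding the scaling, this reduces to comparing $\eta(\cdot / N^{1/2}) \phi$ and $\eta(\cdot / R) \phi'$ in $\dot{H}^{1/2}(\R^2)$, and since $\|\phi - \phi'\|_{\dot{H}^{1/2}} \leq \varepsilon_1$ while both cutoffs tend to $1$ as $N \to \infty$ (once $R \leq N^{1/2}$), this holds for $N$ sufficiently large. Applying Proposition \ref{prop Stability} then yields Parts (1) and (2) at once. The main obstacle will be Step 3, namely transferring the Euclidean Strichartz-type control of $v'$ into uniform bounds on the toroidal $Z$-norm of $V_{R,N}$ and the $N(I_N)$-norm of $e_N$; this relies essentially on the spatial concentration of $V_{R,N}$ inside the fixed coordinate chart $O_0$, where the torus geometry is locally Euclidean, so that the parabolic rescaling converts Euclidean norms into torus norms without loss uniformly in the large parameter $N$.
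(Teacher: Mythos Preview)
Your proposal is correct and follows essentially the same approach as the paper: both apply the stability result (Proposition \ref{prop Stability}) with $\widetilde{u}=V_{R,N}$ and verify the three required hypotheses (uniform $L^\infty_t H^{1/2}_x$ and $Z$-norm bound on $V_{R,N}$, smallness of $\|f_N-V_{R,N}(0)\|_{H^{1/2}}$, and smallness of the error in $N(I_N)$ via the embedding \eqref{eq Embed} and parabolic rescaling). The only minor variation is that the paper controls the $Z$-norm of $V_{R,N}$ through the $H^5$ persistence of $v'$, whereas you propose using the Euclidean Strichartz bounds at the $\dot H^{1/2}$ level; both arguments work.
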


\begin{rmk}\label{rmk U,V}
Notice that there are two types of solutions in this lemma. In fact, we can think of $U_{R,N}$ as the solution of \eqref{NLS} with the cutoff and rescaled initial data while $V_{R,N}$ can be considered as the solution of \eqref{NLS} being cutoff and rescaled ($V_{R,N}$ is not a solution to \eqref{NLS}, but solves an approximate NLS).
\end{rmk}

\begin{proof}[Proof of Lemma \ref{lem Compare Profile}]
This proof is adapted from Lemma 4.2 in \cite{IP1} and Lemma 4.2 in \cite{IPS}.

The proof of this lemma is based on Proposition \ref{prop Stability}. By Theorem \ref{thm Yu}, we have $v'$ exists globally ans satisfies the following two inequalities
\begin{align}\label{eq H^5}
\begin{aligned}
\norm{ \abs{\nabla_{\R^2}}^{\frac{1}{2}} v'}_{L_t^{q} L_x^r  (\R \times \R^2)} \lesssim 1\\
\sup_{t \in \R} \norm{v' (t)}_{H^5 (\R^2)} \lesssim_{ \norm{\phi'}_{H^5 (\R^2)}} 1 ,
\end{aligned}
\end{align}
for  $(q,r)$  any Strichartz admissible pair.

We first consider $v'_{R} (t,x) = \eta(\frac{x}{R}) v'(t,x)$. By plugging $v'_{R} (t,x) $ into \eqref{NLS}, we see that $v'_{R} (t,x)$ solves the following approximate equation
\begin{align*}
(i \partial_t  + \Delta_{\R^2}) v'_{R} = \rho \abs{v'_{R}}^4 v'_{R} + e_R (t,x) ,
\end{align*}
where
\begin{align*}
e_R (t,x) = \rho (\eta (\frac{x}{R}) - \eta^5 (\frac{x}{R}) ) \abs{v'}^4 v' + R^{-2} v'(t,x) (\Delta_{\R^2} \eta) (\frac{x}{R}) + 2R^{-1} \sum_{j=1}^2 \partial_j v'(t,x) \partial_j \eta (\frac{x}{R}) .
\end{align*}
Then $v'_{R,N} (t,x) $ solves
\begin{align*}
(i \partial_t  + \Delta_{\R^2}) v'_{R,N} = \rho \abs{v'_{R,N}}^4 v'_{R,N} + e_{R,N} (t,x) ,
\end{align*}
where
\begin{align*}
e_{R,N}(t,x) = N^{\frac{5}{2}} e_R (N^2t, Nx) .
\end{align*}
Finally, as we explained in Remark \ref{rmk U,V}, $V_{R,N} (t,y) = v'_{R,N} (t, \Psi^{-1} (y))$ with $N \geq 10 R$ solves
\begin{align*}
(i \partial_t  + \Delta_{\R^2}) V_{R,N} (t,y)= \rho \abs{V_{R,N}}^4  V_{R,N} + E_{R,N} (t,y) ,
\end{align*}
where
\begin{align*}
E_{R,N} (t,y) = e_{R,N} (t, \Psi^{-1} (y)).
\end{align*}
To be able to apply Proposition \ref{prop Stability}, we need to check first the following three conditions in Proposition \ref{prop Stability}.
\begin{enumerate}[\bf (1)]
\item
$\norm{V_{R,N}}_{L_t^{\infty} H_x^{\frac{1}{2}} ([-T_0 N^{-2},T_0 N^{-2}] \times \T^2)} + \norm{V_{R,N}}_{Z([-T_0 N^{-2},T_0 N^{-2}])} \lesssim 1$,
\item
$\norm{f_N - V_{R,N}(0)}_{H_x^{\frac{1}{2}}} \leq \varepsilon$,
\item
$\norm{E_{R,N}}_{N([-T_0 N^{-2}, T_0 N^{-2}])} \leq \varepsilon$.
\end{enumerate}

Now we check the conditions listed above one by one.
\begin{enumerate}[\bf (1)]
\item
First, we know that $V_{R,N}(t,y)$ exists globally, thanks to the global existence of $v'(t,x)$. 
\begin{align*}
& \quad \sup_{t \in [-T_0 N^{-2} , T_0 N^{-2}]} \norm{V_{R,N} (t)}_{H_x^{\frac{1}{2}} (\T^2)} \leq \sup_{t \in [-T_0 N^{-2} , T_0 N^{-2}]} \norm{v'_{R,N} (t)}_{H_x^{\frac{1}{2}} (\T^2)} \\
& = \sup_{t \in [-T_0 N^{-2} , T_0 N^{-2}]} \norm{N^{\frac{1}{2}} v'_R (N^2t, Nx)}_{H_x^{\frac{1}{2}} (\R^2)} \\
& = \sup_{t \in [-T_0 N^{-2} , T_0 N^{-2}]} \frac{1}{N^{\frac{1}{2}}} \norm{v'_R (N^2t)}_{L_x^2 (\R^2)} + \norm{v'_R (N^2 t)}_{\dot{H}_x^{\frac{1}{2}} (\R^2)} \\
& \leq \sup_{t \in [-T_0 , T_0 ]} \norm{v'_R}_{H_x^{\frac{1}{2}} (\R^2)} = \sup_{t \in [-T_0 , T_0 ]}  \norm{\eta (\frac{x}{R}) v'(t,x)}_{H_x^{\frac{1}{2}} (\R^2)}\\
& \leq \sup_{t \in [-T_0 , T_0 ]} \norm{\eta(\frac{x}{R}) v'(t,x) }_{L_x^2(\R^2)} + \norm{\abs{\nabla}^{\frac{1}{2}} \eta(\frac{x}{R}) v'(t,x)}_{L_x^2 (\R^2)} + \norm{\eta (\frac{x}{R}) \abs{\nabla}^{\frac{1}{2}} v'(t,x)}_{L_x^2 (\R^2)} \\
& \leq 2 \norm{v'(t,x)}_{H_x^{\frac{1}{2}} (\R^2)} \leq 2 \norm{\phi' (t)}_{H^5 (\R^2)} .
\end{align*}

By Littlewood-Paley decomposition and Sobolev embedding, we obtain that for $p >4$, in particular $p= p_0, p_1$
\begin{align*}
& \quad \norm{V_{R,N}}_{Z([-T_0 N^{-2} ,T_0 N^{-2}])} = \sup_{J \subset [-T_0 N^{-2}, T_0 N^{-2}]} \parenthese{\sum_{M \, dyadic} M^{4- \frac{p}{2}} \norm{P_M V_{R,N}}_{L_{t,x}^p(J \times \T^2)}^p}^{\frac{1}{p}} \\
& \lesssim \sup_{J \subset [-T_0 N^{-2}, T_0 N^{-2}]} \norm{ \inner{\nabla}^{\frac{4}{p}-\frac{1}{2}} V_{R,N}}_{L_{t,x}^p(J \times \T^2)} \lesssim \sup_{J \subset [-T_0 N^{-2}, T_0 N^{-2}]} \norm{ \inner{\nabla}^{\frac{1}{2}} V_{R,N}}_{L_t^p L_x^{\frac{2p}{p-2}}(J \times \T^2)} \\
& \lesssim \norm{\inner{\nabla}^{\frac{1}{2}} V_{R,N}}_{L_t^p L_x^{\frac{2p}{p-2}}([-T_0 N^{-2}, T_0 N^{-2}] \times \R^2)} \lesssim \sup_t \norm{v'(t)}_{H_x^5}.
\end{align*}
The  last inequality above is due to \eqref{eq H^5}. Hence we proved the first condition.

\item
\begin{align*}
& \quad \norm{f_N - V_{R,N} (0)}_{H_x^{\frac{1}{2}} (\T^2)} \leq \norm{\phi_N (\Psi^{-1} (y) - \phi'_{R,N} (\Psi^{-1} (y)}_{H_x^{\frac{1}{2}}(\T^2)} \\
&\leq \norm{\phi_N -\phi'_{R,N}}_{\dot{H}_x^{\frac{1}{2}} (\T^2)}  = \norm{\eta(\frac{x}{N^{\frac{1}{2}}}) \phi (x) - \eta(\frac{x}{N^{\frac{1}{2}}}) \phi' (x) }_{\dot{H}_x^{\frac{1}{2}} (\R^2)} \\
& \leq \norm{\eta(\frac{x}{N^{\frac{1}{2}}}) \phi (x) - \phi(x)}_{\dot{H}_x^{\frac{1}{2}} (\R^2)} + \norm{\phi - \phi'}_{\dot{H}_x^{\frac{1}{2}} (\R^2)} + \norm{\phi' - \eta(\frac{x}{N^{\frac{1}{2}}}) \phi' (x) }_{\dot{H}_x^{\frac{1}{2}} (\R^2)}.
\end{align*}
With the assumption $N \geq 10 R$, and by taking $R > R_0$, $R_0$ large enough, we can make the sum of the three norms above small than $\varepsilon_1$.

\item
\begin{align*}
& \quad \norm{E_{R,N}}_{N([-T_0 N^{-2} ,T_0 N^{-2}])} = \norm{\int_0^t e^{i(t-s) \Delta} E_{R,N} (s) \, ds}_{X^{\frac{1}{2}} ([-T_0 N^{-2} ,T_0 N^{-2}])} \\
& \leq \sup_{\norm{u_0}_{Y^{-\frac{1}{2}}} =1} \abs{\int_{[-T_0 N^{-2} ,T_0 N^{-2}] \times \T^2} \bar{u}_0 \cdot E_{R,N} \, dx dt} \\
& \leq \sup_{\norm{u_0}_{Y^{-\frac{1}{2}}} =1} \norm{u_0}_{Y^{-\frac{1}{2}}} \norm{\abs{\nabla}^{\frac{1}{2}} E_{R,N}}_{L_t^1 L_x^2 ([-T_0 N^{-2} ,T_0 N^{-2}] \times \T^2)} \\
& \leq \norm{\abs{\nabla_{\R^2} }^{\frac{1}{2}}e_{R,N}}_{L_t^1 L_x^2 ([-T_0 N^{-2} ,T_0 N^{-2}] \times \R^2)} = \norm{\abs{\nabla_{\R^2}}^{\frac{1}{2}} e_{R}}_{L_t^1 L_x^2 ([-T_0 ,T_0 ] \times \R^2)} .
\end{align*}
To estimate $\norm{\abs{\nabla_{\R^2}}^{\frac{1}{2}} e_{R}}_{L_t^1 L_x^2 ([-T_0 ,T_0 ] \times \R^2)}$, we compute the following two terms first.
\begin{align*}
\abs{ e_{R} (t,x) } & = \abs{   \rho (\eta (\frac{x}{R}) - \eta^5 (\frac{x}{R}) ) \abs{v'}^4 v' + R^{-2} v'(t,x) (\Delta_{\R^2} \eta) (\frac{x}{R}) + 2R^{-1} \sum_{j=1}^2 \partial_j v'(t,x) \partial_j \eta (\frac{x}{R}) }\\
& \lesssim \abs{\rho (\eta (\frac{x}{R}) - \eta^5 (\frac{x}{R}) ) v'} + R^{-2} \abs{(\Delta_{\R^2} \eta) (\frac{x}{R}) v'} + R^{-1} \sum_{j=1}^2 \abs{\partial_j v'(t,x) \partial_j \eta (\frac{x}{R})}   .
\end{align*}
Then
\begin{align*}
&\lim_{R \to 0} \norm{ e_R}_{L_t^1 L_x^2 ([-T_0, T_0] \times \R^2)} =0 .
\end{align*}
Notice that
\begin{align*}
& \sum_{k=1}^2 \abs{\partial_k e_{R} (t,x)} \lesssim_{\norm{\phi'}_{H^5(\R^2)}} \mathbf{1}_{[R, 2R]} (\abs{x}) \cdot \parenthese{\abs{v'(t,x)} + \sum_{k=1}^2 \abs{\partial_k v' (t,x)} + \sum_{k=1}^2 \sum_{j=1}^2 \abs{\partial_k \partial_j  v'(t,x)}}  ,
\end{align*}
therefore,
\begin{align*}
\lim_{R \to \infty} \norm{ \nabla e_R}_{L_t^1 L_x^2 ([-T_0, T_0] \times \R^2)} =0 .
\end{align*}
Interpolating the estimates above, we see that
\begin{align*}
\lim_{R \to \infty} \norm{ \abs{\nabla}^{\frac{1}{2}} e_R}_{L_t^1 L_x^2 ([-T_0, T_0] \times \R^2)} =0 .
\end{align*}
Then
\begin{align*}
\lim_{R \to \infty} \norm{ \abs{\nabla}^{\frac{1}{2}} E_{R,N}}_{L_t^1 L_x^2 ([-T_0 N^{-2}, T_0 N^{-2}] \times \T^2)} =0 .
\end{align*}
\end{enumerate}
Now we proved the three conditions listed, hence finish the proof of Lemma \ref{lem Compare Profile}.
\end{proof}

\begin{lem}[Extinction lemma]\label{lem Extinction}
Let $\phi \in \dot{H}^{\frac{1}{2}} (\R^2)$, and define $f_N$ as in \eqref{eq Euclidean profile}. For any $\varepsilon > 0$, there exist $T = T( \phi, \varepsilon)$ and $N_0 (\phi, \varepsilon)$ such that for all $N \geq N_0$, there holds that
\begin{align*}
\norm{e^{it\Delta} f_N}_{Z([TN^{-2}, T^{-1}])} \lesssim \varepsilon .
\end{align*}
\end{lem}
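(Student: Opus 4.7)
The plan is to reduce the extinction bound to the scattering and dispersion of the free Schr\"odinger evolution on $\R^2$ for smooth initial data, transferred to $\T^2$ via a comparison argument in the spirit of Lemma~\ref{lem Compare Profile}. The mechanism is that on the rescaled time window $[T, N^2/T]$, the Euclidean linear solution with smooth data has its $L^p_{t,x}$-mass concentrated near $t=0$, so cutting off the origin in time makes it small when $T$ is large.

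First, I would perform a density reduction: pick $\phi' \in H^5(\R^2)$ with $\norm{\phi - \phi'}_{\dot{H}^{\frac{1}{2}}(\R^2)} \leq \varepsilon$ and let $f'_N$ be defined from $\phi'$ by \eqref{eq Euclidean profile}. Because the rescaling preserves $\dot{H}^{\frac{1}{2}}$ and the remaining $L^2$-contribution is $O(N^{-1/2})$, we have $\norm{f_N - f'_N}_{H^{\frac{1}{2}}(\T^2)} \lesssim \varepsilon$ for $N$ large. Combining Remark~\ref{rmk Z<X} with the standard linear atomic-space estimate $\norm{e^{it\Delta_{\T^2}} g}_{X^{\frac{1}{2}}} \lesssim \norm{g}_{H^{\frac{1}{2}}}$ (built into Definition~\ref{defn X^s Y^s}) yields
\begin{align*}
\norm{e^{it\Delta_{\T^2}}(f_N - f'_N)}_{Z([TN^{-2}, T^{-1}])} \lesssim \norm{e^{it\Delta_{\T^2}}(f_N - f'_N)}_{X^{\frac{1}{2}}} \lesssim \varepsilon,
\end{align*}
so it suffices to prove the bound with $\phi'$ in place of $\phi$.

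Next, for this smooth profile I would approximate $e^{it\Delta_{\T^2}} f'_N$ by the rescaled-and-transferred Euclidean linear solution
\begin{align*}
V_{R,N}(t,y) = N^{\frac{1}{2}} \eta\parenthese{\frac{\Psi^{-1}(y)}{R}} v'\parenthese{N^2 t, N\Psi^{-1}(y)}, \qquad v'(s,z) = e^{is\Delta_{\R^2}}\phi'(z),
\end{align*}
by rerunning the stability argument of Lemma~\ref{lem Compare Profile} with $\rho = 0$. For fixed $T$ and with $R$, $N$ chosen sufficiently large in the correct order, the cutoff and transfer errors feeding Proposition~\ref{prop Stability} can be made arbitrarily small, yielding $\norm{e^{it\Delta_{\T^2}} f'_N - V_{R,N}}_{X^{\frac{1}{2}}([TN^{-2}, T^{-1}])} = o_N(1)$. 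Via Remark~\ref{rmk Z<X} this reduces the problem to bounding $\norm{V_{R,N}}_{Z([TN^{-2}, T^{-1}])}$. Since the Littlewood--Paley pieces of $V_{R,N}$ are essentially concentrated at frequency $N$, a direct scaling computation with $s = N^2 t$, $z = N\Psi^{-1}(y)$ reduces this $Z$-norm to $\norm{v'}_{L^{p_j}_{t,x}([T, N^2/T] \times \R^2)}$ for $p_j \in \{p_0, p_1\}$.

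Finally, since $\phi' \in H^5$, global Strichartz on $\R^2$ (together with Sobolev embedding, whose small derivative cost is free on smooth data) gives $\norm{v'}_{L^{p_j}_{t,x}(\R \times \R^2)} \lesssim_{\phi'} 1$, and by absolute continuity of the integral this norm on $[T, \infty)$ tends to $0$ as $T \to \infty$. Choosing $T = T(\phi', \varepsilon)$ to make the Euclidean contribution $\leq \varepsilon/2$, and then $N_0 = N_0(\phi', \varepsilon, T, R)$ so the comparison error is $\leq \varepsilon/2$, finishes the proof. The main technical obstacle is precisely the comparison in the third paragraph: while Lemma~\ref{lem Compare Profile} is stated on the short window $[-T_0 N^{-2}, T_0 N^{-2}]$, here the rescaled comparison window $[T, N^2/T]$ grows with $N$. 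Controlling $\norm{\abs{\nabla_{\R^2}}^{\frac{1}{2}} e_R}_{L^1_t L^2_x}$ uniformly on this growing window requires sharper spatial-decay estimates for $v'$ (exploiting the smoothness and decay of $\phi'$), and the cutoff scale $R$ must be chosen with care relative to $T$, $N$ and the fundamental domain of $\T^2$.
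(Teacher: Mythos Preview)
Your approach has a genuine gap, and it is precisely the obstacle you flag in the last paragraph --- but the suggested fix does not resolve it. The comparison of Lemma~\ref{lem Compare Profile} with $\rho=0$ cannot be pushed from the short window $[-T_0 N^{-2}, T_0 N^{-2}]$ to $[TN^{-2}, T^{-1}]$. After rescaling, the error term is
\[
e_R(s,z) = R^{-2}\, v'(s,z)\,(\Delta\eta)(z/R) + 2R^{-1}\,\nabla v'(s,z)\cdot(\nabla\eta)(z/R),
\]
supported in the annulus $\abs{z}\sim R$, and the stability proposition needs $\norm{\abs{\nabla}^{1/2} e_R}_{L^1_s L^2_z}$ on $[T, N^2/T]$ to be small uniformly in $N$. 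But $e_R$ is essentially the flux of $v'$ through the sphere $\abs{z}=R$: since the free evolution $v'(s,\cdot)=e^{is\Delta_{\R^2}}\phi'$ eventually exits any fixed ball (its bulk sits near $\abs{z}\sim s$ for large $s$), the time-integrated flux is $O(1)$, not $o(1)$. Sharper pointwise decay of $v'$ only tells you that the dominant contribution to $\int\norm{e_R(s)}_{L^2}\,ds$ is concentrated near $s\sim R$, over a window of width $\sim R$, giving a contribution $\sim R^{-1}\cdot R\cdot \norm{\nabla\phi'}_{L^2}=O(1)$. No choice of $R\le N$ (which is forced, since the support of $V_{R,N}$ must fit in the fundamental domain) avoids this. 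In physical-space terms: the torus data $f_N$ has frequency $\sim N$, so under the torus linear flow over times up to $T^{-1}$ it travels a distance $\sim N/T \gg 1$ and wraps around $\T^2$ many times; no cutoff-and-transfer comparison with a single Euclidean solution can track it over this window.

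The paper's proof is fundamentally different: it never leaves $\T^2$. It establishes a dispersive estimate for the periodic Schr\"odinger kernel
\[
K_M(t,x)=\sum_{\xi\in\Z^2} e^{-i[t\abs{\xi}^2 + x\cdot\xi]}\,\eta^2(\xi/M),
\]
using Bourgain's Gauss-sum bound together with Dirichlet's approximation lemma, to obtain $\norm{K_M}_{L^\infty_{t,x}([SM^{-2},\,S^{-1}]\times\T^2)}\lesssim S^{-1}M^2$. This gives honest decay \emph{on the torus} over the intermediate time range. After a density reduction to $\phi\in C_0^\infty(\R^2)$ (giving $\norm{f_N}_{L^1}\lesssim N^{-3/2}$ and $\norm{P_K f_N}_{L^2}\lesssim (1+K/N)^{-10}N^{-1/2}$), the $Z$-norm is split into frequencies $K$ near $N$ (handled by the kernel bound and the $L^1$ estimate on $f_N$) and away from $N$ (handled by Strichartz and the $L^2$ decay of $P_K f_N$). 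This kernel-based mechanism is exactly what your Euclidean-transfer strategy is missing.
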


\begin{proof}[Proof of Lemma \ref{lem Extinction}]
This proof is adapted from Lemma 4.3 in \cite{IP1}.

For $M \geq 1$, we define
\begin{align*}
K_M (t,x) = \sum_{\xi \in \Z^2} e^{-i[t \abs{\xi}^2 + x \cdot \xi]} \eta^2 (\frac{\xi}{M}) = e^{it \Delta} P_{\leq M} \delta_0.
\end{align*}

We note from Lemma 3.18 in \cite{Bour93} that $K_M$ satisfies
\begin{align}\label{eq K_M}
\abs{K_M (t,x)} \lesssim \prod_{i=1}^2 \square{\frac{M}{\sqrt{q_i} (1 + M \abs{\frac{t}{\lambda_i} -\frac{ai}{q_i}}^{\frac{1}{2}})}} ,
\end{align}
if $a_i$ and $q_i$ satisfying $\frac{t}{\lambda_i} = \frac{a_i}{q_i} + \beta_i$, where $q_i \in \{ 1, \dots , M \}$, $a_i \in \Z$, $(a_i , q_i) =1$ and $\abs{\beta_i} \leq (M q_i)^{-1}$ for all $i = 1, 2$.

From this, we conclude that for any $1 \leq S \leq M$,
\begin{align}\label{eq K_M est}
\norm{K_M (t,x)}_{ L_{t,x}^{\infty} (\T^2 \times [SM^{-2} , S^{-1}])} \lesssim S^{-1} M^2 . 
\end{align}
This follows from \eqref{eq K_M} and Dirichlet's lemma. 
\begin{lem}[Dirichlet's lemma]
For any real number $\alpha$, and positive integer $N$, there exist integers $p$ and $q$ such that $1 \leq q \leq N$ and $\abs{q \alpha - p} < \frac{1}{N}$.
\end{lem}

Indeed, assume that $\abs{t} \leq \frac{1}{S}$, and write $\frac{t}{\lambda_i} = \frac{a_i}{q_i} + \beta_i$ and $\abs{\beta_i} \leq \frac{1}{M q_i} \leq \frac{1}{M} \leq \frac{1}{S}$. So we obtain
\begin{align*}
\abs{\frac{a_i}{q_i}} \leq \frac{2}{S}  \implies q_i \geq \frac{a_i}{2}.
\end{align*}
Therefore if $a_i \geq 1$, then $q_i \geq \frac{S}{2}$ and 
\begin{align*}
\frac{M}{\sqrt{q_i} (1 + M \abs{\frac{t}{\lambda_i} -\frac{ai}{q_i}}^{\frac{1}{2}})}  \leq \frac{M}{\sqrt{q}} \lesssim S^{-\frac{1}{2}} M .
\end{align*} 
If $a_i =0$ for $i=1,2$, then
\begin{align*}
\frac{M}{\sqrt{q_i} (1 + M \abs{\frac{t}{\lambda_i} -\frac{ai}{q_i}}^{\frac{1}{2}})}  \lesssim \frac{M}{\sqrt{q_i}  M \abs{t}^{\frac{1}{2}}}  \lesssim \abs{t}^{-\frac{1}{2}} \leq S^{-\frac{1}{2}} M.
\end{align*}
The last inequality holds due to the restriction $t \in [SM^{-1} , S^{-1}]$.

In view of the Strichartz estimates in Lemma \ref{lem Strichartz1} and Lemma \ref{lem Strichartz2}, to prove the lemma we may assume that $\phi \in C_0^{\infty} (\R^2)$. In this case, from the definition, 
\begin{align}\label{eq Claim2}
\norm{f_N}_{L^1(\T^2)} \lesssim_{\phi} N^{-\frac{3}{2}} , \quad \norm{P_K f_N}_{L^2 (\T^2)} \lesssim_{\phi} (1 + \frac{K}{N})^{-10} N^{-\frac{1}{2}} .
\end{align}

Now we can estimate $\norm{e^{it \Delta} f_N}_{Z(TN^{-2} , T^{-1})} $ for $1 \leq T \leq N$ and $p \in (4, \infty]$. To this end, we decompose the sum in frequencies $K$ into the following two pieces:
\begin{align*}
K \in [NT^{-\frac{1}{100}} , NT^{\frac{1}{100}}] \quad \text{ and } K \notin [NT^{-\frac{1}{100}} , NT^{\frac{1}{100}}] .
\end{align*}

Using the Strichartz estimates in Lemma \ref{lem Strichartz1} and Lemma \ref{lem Strichartz2}, we obtain, for $p \in ( 4 , \infty]$,
\begin{align}\label{eq Strichartz}
\norm{e^{it\Delta} P_K f_N}_{L_{t,x}^p ([-1, 1] \times \T^2 )} \lesssim_{\phi, p} K^{1-\frac{4}{p}} (1 + \frac{K}{N})^{-10} N^{-\frac{1}{2}} .
\end{align}
Therefore, the second piece becomes
\begin{align*}
\sum_{K \notin [NT^{-\frac{1}{100}} , NT^{\frac{1}{100}}]} K^{4-\frac{p}{2}} \norm{e^{it \Delta} f_N}_{L_{t,x}^p ([-1,1] \times \T^2)}^p \lesssim_{\phi} T^{-\frac{1}{100}} .
\end{align*}
To estimate the remaining sum over $K \in [NT^{-\frac{1}{100}} , NT^{\frac{1}{100}}]$ we use the first bound in \eqref{eq Claim2} and \eqref{eq K_M est} (with $M \approx \max (K,N)$, $S \approx T$). If follows that for all $K$,
\begin{align}\label{eq Claim3}
\norm{e^{it\Delta} P_K f_N}_{L_{t,x}^{\infty} ([TN^{-2} ,T^{-1}] \times \T^2) } \lesssim_{\phi} T^{-1} (K+N)^2 N^{-\frac{3}{2}} .
\end{align}
Interpolating with \eqref{eq Strichartz}, for $K \in [NT^{-\frac{1}{100}} , NT^{\frac{1}{100}}]$,
\begin{align*}
\norm{e^{it\Delta} P_K f_N}_{L_{t,x}^{p} ([TN^{-2} ,T^{-1}] \times \T^2)} \lesssim_{\phi} T^{-\frac{1}{100}} N^{\frac{1}{2} -\frac{p}{4}} .
\end{align*}
By setting $T = T(\varepsilon ,\phi)$ sufficiently large and putting the estimates of these two pieces, we complete the proof of Lemma \ref{lem Extinction}.
\end{proof}

\begin{defn}
Given $f \in L^2(\T^2)$, $t_0 \in \R$, and $x_0 \in \T^2$ we define
\begin{align*}
(\pi_{x_0} f) (x): &= f(x-x_0)\\
(\Pi_{t_0, x_0}) f(x) : &= (e^{-it_0 \Delta} f) (x- x_0) = (\pi_{x_0} e^{-it_0 \Delta} f) (x) .
\end{align*}
As in \eqref{eq Euclidean profile}, given $\phi \in \dot{H}^1 (\R^2)$ and $N \geq 1$, we define
\begin{align*}
T_N \phi(x) : = N^{\frac{1}{2}} \widetilde{\phi} (N \Psi^{-1} (x)), \text{ where } \widetilde{\phi} (y) := \eta(\frac{y}{N^{\frac{1}{2}}}) \phi(y),
\end{align*}
and observe that
\begin{align*}
T_N : \dot{H}^{\frac{1}{2}} (\R^2) \to H^{\frac{1}{2}}(\T^2) \text{ is a linear operator with } \norm{T_N \phi}_{H^1(\T^2)} \lesssim \norm{\phi}_{\dot{H}^1(\R^2)}.
\end{align*}
Let $\widetilde{\mathcal{F}}_e$ denote the set of renormalized Euclidean frames
\begin{align*}
\widetilde{\mathcal{F}}_e : = \{ (N_k, t_k, x_k)_{k \geq 1}: N_k \in [1, \infty), t_k \to 0, x_k \in \T^2 , N_k \to \infty, \\
 \text{ and either } t_k =0 \text{ for any } k \geq 1 \text{ or } \lim_{k \to \infty} N_k^2 \abs{t_k} = \infty  \} .
\end{align*} 
\end{defn}

\begin{prop}[Euclidean profiles]\label{prop Euclidean profiles}
Assume that $\OO = (N_k , t_k, x_k)_k \in \widetilde{\mathcal{F}}_e$, $\phi \in \dot{H}^{\frac{1}{2}} (\R^2)$, and let $U_k (0) = \Pi_{t_k, x_k} (T_{N_k} \phi)$.
\begin{enumerate}[\bf (1)]
\item
There exists $\tau = \tau(\phi)$ such that for $k$ large enough (depending only on $\phi$ and $\OO$ there is a nonlinear solution $U_k \in X^{\frac{1}{2}} (-\tau , \tau)$ of \eqref{NLS} with initial data $U_k (0)$, and
\begin{align}\label{eq LWP}
\norm{U_k}_{X^{\frac{1}{2}} ((-\tau , \tau))} \lesssim_{E_{\R^2} (\phi)} 1.
\end{align}

\item
There exists a Euclidean solution $u \in C(\R: \dot{H}^{\frac{1}{2}} (\R^2))$ of 
\begin{align}\label{eq ENLS}
(i\partial_t + \Delta_{\R^2}) u = \abs{u}^4 u
\end{align}
with scattering data $\phi^{\pm \infty}$ such that the following holds, up tp a subsequence: for any $\varepsilon > 0$, there exists $T(\phi, \varepsilon)$ such that for any $T \geq T(\phi ,\varepsilon)$ there exists $R(\phi, \varepsilon , T)$ such taht for all $R \geq R(\phi, \varepsilon ,T)$, there holds that
\begin{align}\label{eq:tildeu}
\norm{U_k - \widetilde{u}_k}_{X^{\frac{1}{2}} (\{ \abs{t-t_k} \leq TN_k^{-2} \} \cap \{ \abs{t} \leq T^{-1} \}  )} \leq \varepsilon
\end{align} 
for $k$ large enough, where
\begin{align*}
(\pi_{-x_k} \widetilde{u}_k) (t,x) = N_k^{\frac{1}{2}} \eta (\frac{N_k}{R} \Psi^{-1} (x)) u (N_k^2 (t-t_k , N_k \Psi^{-1}(x))).
\end{align*}

In addition, up to a subsequence,
\begin{align*}
\norm{U_k (t) - \Pi_{t_k -t , x_k} T_{N_k} \phi^{\pm \infty}}_{X^{\frac{1}{2}} (\{ \pm (t-t_k) \leq TN_k^{-2} \} \cap \{ \abs{t} \leq T^{-1} \}  )} \leq \varepsilon,
\end{align*}
for $k$ large enough (depending on $\phi, \varepsilon , T, R$).
\end{enumerate}
\end{prop}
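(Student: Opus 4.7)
The plan is to follow the standard black-box strategy for Euclidean profiles: construct $U_k$ by gluing a Euclidean-like piece on the concentrating window of size $N_k^{-2}$ with a linear-dominated evolution on the outside, where Lemma~\ref{lem Extinction} supplies smallness of the $Z$-norm and Proposition~\ref{prop Stability} propagates it through a small perturbation argument. Translation invariance of \eqref{NLS} lets me reduce to $x_k=0$ throughout, so that $\Pi_{t_0,x_0}$ is just $e^{-it_0\Delta}$. Let $u\in C(\R:\dot{H}^{\frac{1}{2}}(\R^2))$ be the global Euclidean solution of \eqref{eq ENLS} with $u(0)=\phi$ produced by Theorem~\ref{thm Yu}, and let $\phi^{\pm\infty}$ denote its scattering data.

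For Part~(1), I fix $\varepsilon>0$ small and choose $T_0=T_0(\phi,\varepsilon)$ so large that both $\norm{u(\pm T_0)-e^{\pm iT_0\Delta_{\R^2}}\phi^{\pm\infty}}_{\dot{H}^{\frac{1}{2}}(\R^2)}<\varepsilon$ (scattering of $u$) and $\norm{e^{it\Delta}T_{N_k}\phi^{\pm\infty}}_{Z([T_0 N_k^{-2},T_0^{-1}])}<\varepsilon$ for all $k$ large (Lemma~\ref{lem Extinction}). Applying Lemma~\ref{lem Compare Profile}(1) produces a nonlinear torus solution $V_k$ on the concentration window $I_k^{T_0}:=[t_k-T_0 N_k^{-2},t_k+T_0 N_k^{-2}]$ with $V_k(t_k)=T_{N_k}\phi$ and $\norm{V_k}_{X^{\frac{1}{2}}(I_k^{T_0})}\lesssim_{E_{\R^2}(\phi)}1$. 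Lemma~\ref{lem Compare Profile}(2), applied to an $H^5$-regularization $\phi'$ of $\phi$ (with $\norm{\phi-\phi'}_{\dot{H}^{\frac{1}{2}}}<\varepsilon_1$), together with the scattering estimate above, then gives $\norm{V_k(t_k\pm T_0 N_k^{-2})-e^{\pm iT_0 N_k^{-2}\Delta}T_{N_k}\phi^{\pm\infty}}_{H^{\frac{1}{2}}}\leq \varepsilon+o_k(1)$. On the outer interval $[t_k+T_0 N_k^{-2},T_0^{-1}]$ I use $\wt{u}(t):=e^{i(t-t_k)\Delta}T_{N_k}\phi^{+\infty}$ as approximate solution to \eqref{NLS}: its $Z$-norm is $<\varepsilon$ and hence, by Lemma~\ref{lem Nonlinear est}, the residual $|\wt{u}|^4\wt{u}$ is small in the $N$-norm. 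Proposition~\ref{prop Stability} then extends the nonlinear solution across $[t_k+T_0 N_k^{-2},T_0^{-1}]$, and a symmetric argument treats $[-T_0^{-1},t_k-T_0 N_k^{-2}]$; gluing these produces $U_k$ on $(-\tau,\tau)$ with $\tau:=T_0^{-1}$ and the bound \eqref{eq LWP}.

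For Part~(2), observe that the function $\wt u_k$ defined after \eqref{eq:tildeu} is, up to translation by $\pi_{x_k}$, precisely the function $V_{R,N_k}$ constructed in Lemma~\ref{lem Compare Profile}(2) from $u$ (with time centered at $t_k$). Thus \eqref{eq:tildeu} follows directly from Lemma~\ref{lem Compare Profile}(2) by a standard diagonal limiting procedure: regularize $\phi\to\phi'\in H^5$ within $\dot{H}^{\frac{1}{2}}$-error $\varepsilon_1$, then send $R$ and $k$ to infinity. For the second approximation, fix $t$ with $t_k+T N_k^{-2}\le t\le T^{-1}$; by the argument of Part~(1), $\norm{U_k(t_k+TN_k^{-2})-e^{iTN_k^{-2}\Delta}T_{N_k}\phi^{+\infty}}_{H^{\frac{1}{2}}}<\varepsilon$ for $T$ large and $k$ large, and Proposition~\ref{prop Stability}, with approximate solution $e^{i(t-t_k)\Delta}T_{N_k}\phi^{+\infty}$ whose $Z$-norm on $[TN_k^{-2},T^{-1}]$ is small by Lemma~\ref{lem Extinction}, propagates this closeness to yield the required $X^{\frac{1}{2}}$-bound on $U_k(t)-\Pi_{t_k-t,x_k}T_{N_k}\phi^{+\infty}$. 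The case $t-t_k\leq -TN_k^{-2}$ is symmetric, using $\phi^{-\infty}$ in place of $\phi^{+\infty}$.

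The main technical obstacle I expect is the transition at the boundary of the concentration window: at $t=t_k\pm TN_k^{-2}$ one must verify that $U_k$ has dispersed enough to resemble the torus linear evolution of the Euclidean scattering data $\phi^{\pm\infty}$. This requires a delicate ordering of limits---first $T\to\infty$ to exploit Euclidean scattering of $u$, then $R\to\infty$ to tighten Lemma~\ref{lem Compare Profile}(2), and finally $k\to\infty$---and it is the extinction lemma (Lemma~\ref{lem Extinction}) that prevents the linear evolution $e^{it\Delta}T_{N_k}\phi^{\pm\infty}$ from accumulating a non-negligible $Z$-norm over the macroscopic post-concentration interval, thereby making the stability argument feasible.
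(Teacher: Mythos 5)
Your construction is essentially the paper's: a Euclidean approximation on the concentration window via Lemma \ref{lem Compare Profile}, closeness to the linear flow of the scattering data at the edges of the window via Theorem \ref{thm Yu}, and propagation across the macroscopic outer intervals via the extinction lemma together with the small-$Z'$ local/stability theory. For the case $t_k\equiv 0$ this is a correct and complete outline.

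However, there is a genuine gap: you never address the second alternative in the definition of $\widetilde{\mathcal{F}}_e$, namely $\lim_{k\to\infty}N_k^2\abs{t_k}=\infty$, and your setup is wrong in that case. The initial datum is prescribed at time $0$, i.e.\ $U_k(0)=e^{-it_k\Delta}\pi_{x_k}(T_{N_k}\phi)$, while the concentration window $[t_k-T_0N_k^{-2},\,t_k+T_0N_k^{-2}]$ lies far from $t=0$ in rescaled time. Your gluing takes $V_k(t_k)=T_{N_k}\phi$, i.e.\ you match the Euclidean solution $u$ with $u(0)=\phi$ at the \emph{center} of the window; running that solution back to $t=0$ through the outer linear regime produces data approximately equal to $e^{-it_k\Delta}\pi_{x_k}(T_{N_k}\phi^{-\infty})$ (or $\phi^{+\infty}$, depending on the sign of $t_k$), which differs from the prescribed $e^{-it_k\Delta}\pi_{x_k}(T_{N_k}\phi)$ unless $\phi$ happens to coincide with its own scattering datum. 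This is also why the statement of Part (2) only asserts the existence of \emph{some} Euclidean solution $u$ with scattering data $\phi^{\pm\infty}$, rather than the one with $u(0)=\phi$. The correct fix, as in the paper, is to invoke the existence of wave operators for the $\dot H^{1/2}$-critical NLS on $\R^2$: take $v$ solving \eqref{eq ENLS} with $\lim_{t\to\mp\infty}\norm{v(t)-e^{it\Delta_{\R^2}}\phi}_{\dot H^{1/2}(\R^2)}=0$, so that the rescaled, truncated version of $v$ matches the prescribed datum at $t=0$ up to $o_k(1)$, and then run the stability argument from $t=0$. With that case added (and the identification of $u$ adjusted accordingly), your argument goes through.
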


\begin{proof}[Proof of Proposition \ref{prop Euclidean profiles}]
This proof is adapted from Proposition 4.4 in \cite{IP1}.

Without loss of generality, we may assume that $x_k =0$. 

\begin{enumerate}[\bf (1)]
\item
First, for $k$ large enough, we can make
\begin{align*}
\norm{\phi - \eta \parenthese{ \frac{x}{N_k^{\frac{1}{2}}}} \phi }_{\dot{H}_x^{\frac{1}{2}} (\R^2)} \leq \varepsilon_1.
\end{align*}
For each $N_k$, we choose $T_{0, N_k} = \tau N_k^2$ ($T_{0, N_k}$ is the coefficient in Lemma \ref{lem Compare Profile}). For each $T_{0, N_k}$, to apply  Lemma \ref{lem Compare Profile}, we can choose $R_k$ very large enough. Note that $R_k$ is determined by $T_{0, N_k}$ in the proof of Lemma \ref{lem Compare Profile}. 

\item
First we consider the case $t_k=0$, for any $k$, then we directly have \eqref{eq LWP} from Lemma \ref{lem Compare Profile}. The proposition follows from Theorem \ref{thm Yu}, Lemma \ref{lem Compare Profile} and Lemma \ref{lem Extinction}. In fact, we let $u$ be the nonlinear Euclidean solution of \eqref{eq ENLS} with $u(0) =\phi$ and notice that for any $\delta >0$ there is $T_{\phi, \delta}$ such that
\begin{align*}
\norm{\abs{\nabla_{\R^2}}^{\frac{1}{2}} u}_{L_{t,x}^{4} (\{ \abs{t} > T(\phi, \delta) \times \R^2 \})} \leq \delta .
\end{align*}
By Theorem \ref{thm Yu}, we obtain
\begin{align*}
\norm{u(\pm T(\phi, \delta)) - e^{\pm i T(\phi, \delta) \Delta} \phi^{\pm \infty}}_{\dot{H}^{\frac{1}{2}} (\R^2)} \leq \delta,
\end{align*}
which implies
\begin{align*}
\norm{U_{N_k} (\pm TN_k^{-2}) - \Pi_{\pm {T, x_k} }\phi^{\pm \infty} }_{H^{\frac{1}{2}} (\T^2)} \leq \delta .
\end{align*}
By Remark  \ref{rmk Z<X} and Remark \ref{rmk Embed}, we have
\begin{align}\label{eq Diff}
\norm{e^{it\Delta} (U_{N_k}(\pm TN_k^{-2}) - \Pi_{\pm {T,x_k}} \phi^{\pm \infty})}_{X^{\frac{1}{2}}(\abs{t} \leq T^{-1})} \leq \delta.
\end{align}
Proposition \ref{prop LWP} implies that
\begin{align*}
\norm{U_{N_k} -e^{it\Delta} U_{N_k} (\pm TN_k^{-2})}_{X^{\frac{1}{2}}} \leq \delta,
\end{align*}
and combining \eqref{eq Diff}, we have
\begin{align*}
\norm{U_{N_k} - \Pi_{\pm {t,x_k}} \phi^{\pm \infty})}_{X^{\frac{1}{2}}( \{\pm t > \pm T N_k^{-2}) \} \cap \{ \abs{t} \leq T^{-1} \}} \leq \varepsilon.
\end{align*}

Now let us consider the second case: $\abs{N_k}^2 \abs{t_k} \to \infty$. 
\begin{align*}
U_k (0) = \Pi_{t_k ,0}(T_{N_k} \phi) = e^{-it_k \Delta} \parenthese{N_k^{\frac{1}{2}} \widetilde{\phi}(N_k \Psi^{-1} (x)) }.
\end{align*}
By the existence of wave operator of NLS, we know that the following initial value problem is global well-posed, so there exists $v$ satisfying:
\begin{align*}
\begin{cases}
(i \partial_t + \Delta_{\R^2}) v = \abs{v}^4 v,\\
\lim_{t \to - \infty} \norm{v(t) - e^{-t\Delta} \phi}_{\dot{H}^{\frac{1}{2}} (\R^2)} =0.
\end{cases}
\end{align*}

We set 
\begin{align*}
\widetilde{v}_k (t) = N_k^{\frac{1}{2}} \eta \parenthese{\frac{N_k \Psi^{-1}}{R}} v (N_k^2 t , N_k \Psi^{-1} (x)  ),
\end{align*}
then
\begin{align*}
\widetilde{v}(-t_k) = N_k^{\frac{1}{2}} \eta \parenthese{\frac{N_k \Psi^{-1}}{R}} v (-N_k^2 t , N_k \Psi^{-1} (x) ).
\end{align*}

For $k$ and $R$ large enough,
\begin{align*}
& \quad \norm{\widetilde{v}_k (-t_k) - e^{-it_k \Delta} N_k^{\frac{1}{2}} \eta (N_k^{\frac{1}{2}} \Psi^{-1}) \phi (N_k \Psi^{-1})}_{\dot{H}_x^{\frac{1}{2}} (\T^2)} \\
& \leq \norm{\eta(\frac{x}{N_k^{\frac{1}{2}}}) v(-N_k^2 t_k , x) - e^{it_k N_k^2 \Delta} \eta(\frac{x}{N_k^{\frac{1}{2}}}) \phi(x) }_{\dot{H}_x^{\frac{1}{2}} (\R^2)} \leq \varepsilon.
\end{align*}

So $V_k(t)$ solves initial value problem \eqref{NLS} on $\T^2$, with initial data $V_k(0) =\widetilde{V}_k (0)$, which implies $V_k(t)$ exists in $[-\delta, \delta]$, and $\norm{V_k(t) - \widetilde{V}_k(t)}_{X^{\frac{1}{2}} (-\delta , \delta)} \lesssim \varepsilon$. By the stability, $\norm{U_k - V_k}_{X^{\frac{1}{2}} ((-\delta , \delta))} \to 0$, as $k \to \infty$.
\end{enumerate}
Now we finish the proof of Proposition \ref{prop Euclidean profiles}.
\end{proof}

The following corollary decompose the nonlinear Euclidean profiles $U_k$ defined in Proposition \ref{prop Euclidean profiles}. This corollary follows closely in a part of the proof of Lemma 6.2 in \cite{IP1}.

\begin{cor}[Decomposition of the nonlinear Euclidean profiles $U_k$]\label{cor Decomp nonlinear Euclidean profiles}
Consider $U_k$ is the nonlinear Euclidean profiles with respect to $\OO = (N_k , t_k, x_k)_k \in \widetilde{\mathcal{F}}_e$ defined above. For any $\theta > 0$, there exists $T_{\theta}^0$ sufficiently large such that for all $T_{\theta}$, $T_{\theta} \geq T_{\theta}^0$ and there exists $R_{\theta}$ sufficiently large such that for all $k$ large enough (depending on $R_{\theta}$) we can decompose $U_k$ as following:
\begin{align*}
\mathbf{1}_{(-T_{\theta}^{-1} , T_{\theta}^{-1})} (t) U_k = \omega_k^{\theta , -\infty} + \omega_k^{\theta , +\infty} + \omega_k^{\theta} + \rho_k^{\theta} ,
\end{align*}
and $\omega_k^{\theta , \pm \infty}$, $\omega_k^{\theta}$ and $\rho_k^{\theta}$ satisfying the following conditions:
\begin{align}\label{eq cor4.8}
\begin{aligned}
& \norm{\omega_k^{\theta, \pm \infty}}_{Z'  (-T_{\theta}^{-1} , T_{\theta}^{-1})} + \norm{\rho_k^{\theta}}_{X^{\frac{1}{2}} (-T_{\theta}^{-1} , T_{\theta}^{-1})} \leq \theta,\\
& \norm{\omega_k^{\theta, \pm \infty}}_{X^{\frac{1}{2}} (-T_{\theta}^{-1} , T_{\theta}^{-1})} + \norm{\omega_k^{\theta}}_{X^{\frac{1}{2}} (-T_{\theta}^{-1} , T_{\theta}^{-1})} \lesssim 1,\\
& \omega_k^{\theta , \pm \infty} = P_{\leq R_{\theta} N_k \omega_k^{\theta , \pm \infty}} ,\\
& \abs{\nabla_x^m \omega_k^{\theta}} + (N_k)^{-2} \mathbf{1}_{S_k^{\theta}} \abs{\partial_t \nabla_x^m \omega_k^{\theta} } \leq R_{\theta} (N_k)^{\abs{m} +\frac{1}{2}} \mathbf{1}_{S_k^{\theta}}, \quad m \in \N^2, \, 0 \leq \abs{m} \leq 10,
\end{aligned}
\end{align}
where $S_k^{\theta} : = \{ (t,x) \in  (-T_{\theta}^{-1} , T_{\theta}^{-1}) \times \T^2 : \abs{t - t_k} \leq T_{\theta} N_k^{-2} , \abs{x-x_k} \leq R_{\theta} N_k^{-1} \}$.
\end{cor}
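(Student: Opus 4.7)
The plan is to leverage the Euclidean approximation machinery of Proposition \ref{prop Euclidean profiles} and the Extinction Lemma \ref{lem Extinction} to split $U_k$ into a highly concentrated smooth ``bulk'' part near $(t_k, x_k)$, two frequency-localized linear pieces corresponding to the $\pm\infty$ scattering data, and a small remainder. Fix $\theta_1 > 0$ to be chosen small in terms of $\theta$. First we approximate $\phi \in \dot{H}^{\frac{1}{2}}(\R^2)$ by $\psi \in C_0^\infty(\R^2)$ with $\|\phi - \psi\|_{\dot{H}^{\frac{1}{2}}} \leq \theta_1$, and let $v$ denote the global solution of $(i\partial_t + \Delta_{\R^2}) v = |v|^4 v$ with $v(0) = \psi$ from Theorem \ref{thm Yu}. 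Iterating the Duhamel formula in Strichartz spaces upgrades the $H^5$ bound of Theorem \ref{thm Yu} to $v \in C(\R, H^s(\R^2))$ for every $s$, and $v$ scatters to some $\psi^{\pm\infty} \in H^s(\R^2)$. We then choose $T_\theta \geq 1$ large enough that $\|v(\pm T_\theta) - e^{\pm i T_\theta \Delta_{\R^2}} \psi^{\pm\infty}\|_{\dot{H}^{\frac{1}{2}}(\R^2)} \leq \theta_1$, and $R_\theta$ large (depending on $T_\theta$ and $\psi$) so that Proposition \ref{prop Euclidean profiles}(2) applied to $\psi$ delivers, for $k$ large,
\[
\|U_k - \wt{u}_k\|_{X^{\frac{1}{2}}(\{|t-t_k| \leq T_\theta N_k^{-2}\} \cap \{|t| \leq T_\theta^{-1}\})} + \|U_k - \Pi_{t_k - t, x_k} T_{N_k} \psi^{\pm\infty}\|_{X^{\frac{1}{2}}(\{\pm(t-t_k) \geq T_\theta N_k^{-2}\} \cap \{|t| \leq T_\theta^{-1}\})} \leq \theta_1.
\]

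Next we define the three components explicitly. Let $\chi \in C_0^\infty(\R)$ satisfy $\chi \equiv 1$ on $[-1/2, 1/2]$ and be supported in $[-1, 1]$. Set
\[
(\pi_{-x_k} \omega_k^\theta)(t, x) := N_k^{\frac{1}{2}} \chi\!\left(\tfrac{N_k^2(t - t_k)}{T_\theta}\right) \eta\!\left(\tfrac{N_k \Psi^{-1}(x)}{R_\theta}\right) v\bigl(N_k^2(t - t_k),\, N_k \Psi^{-1}(x)\bigr),
\]
so $\omega_k^\theta$ is supported in $S_k^\theta$ and agrees with $\wt{u}_k$ on the core $\{|t - t_k| \leq T_\theta N_k^{-2}/2\}$. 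Define
\[
\omega_k^{\theta, \pm\infty}(t, x) := \mathbf{1}_{\{\pm(t - t_k) \geq T_\theta N_k^{-2}\}}(t)\, \mathbf{1}_{(-T_\theta^{-1}, T_\theta^{-1})}(t)\, P_{\leq R_\theta N_k}\bigl(\pi_{x_k} e^{i(t - t_k)\Delta_{\T^2}} T_{N_k} \psi^{\pm\infty}\bigr)(x),
\]
which is automatically frequency localized to $\leq R_\theta N_k$, and set $\rho_k^\theta := \mathbf{1}_{(-T_\theta^{-1}, T_\theta^{-1})}(t)\, U_k - \omega_k^{\theta, -\infty} - \omega_k^{\theta, +\infty} - \omega_k^\theta$.

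To verify \eqref{eq cor4.8}, the pointwise bound on $\omega_k^\theta$ follows by direct differentiation of the explicit formula: each spatial derivative produces a factor of $N_k$ by the chain rule, while a $\partial_t$ converts via $i\partial_t v = -\Delta v + |v|^4 v$ into an $N_k^2$ factor times bounded spatial quantities (matching the $N_k^{-2}$ prefactor in the estimate). The resulting $C^{10}$-type bound on $v$ is finite since $\psi \in C_0^\infty$, and is absorbed into $R_\theta$. The $X^{\frac{1}{2}}$-boundedness of $\omega_k^\theta$ follows from Lemma \ref{lem Compare Profile} applied to $\psi$, and that of $\omega_k^{\theta, \pm\infty}$ from Strichartz together with $\|T_{N_k}\psi^{\pm\infty}\|_{H^{\frac{1}{2}}(\T^2)} \lesssim \|\psi^{\pm\infty}\|_{\dot{H}^{\frac{1}{2}}(\R^2)}$. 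For the $Z'$-smallness of $\omega_k^{\theta, \pm\infty}$, we apply Lemma \ref{lem Extinction} to $\psi^{\pm\infty}$ on the shifted time interval $\{\pm(t - t_k) \in [T_\theta N_k^{-2}, T_\theta^{-1}]\}$ (using that $t_k \to 0$ as $k \to \infty$) to bound the $Z$-norm by $\theta_1$ for $T_\theta$ large, then interpolate via $\|u\|_{Z'} = \|u\|_Z^{1/2}\|u\|_{X^{\frac{1}{2}}}^{1/2}$ to obtain $\|\omega_k^{\theta, \pm\infty}\|_{Z'} \lesssim \theta_1^{1/2}$.

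Finally, for the smallness of $\rho_k^\theta$ in $X^{\frac{1}{2}}$, we split $(-T_\theta^{-1}, T_\theta^{-1})$ into the concentrated region $\{|t - t_k| \leq T_\theta N_k^{-2}/2\}$ and the two scattering regions $\{\pm(t - t_k) \geq T_\theta N_k^{-2}\}$, with the narrow transition bands absorbed into the overlap of the cutoffs. On the concentrated region, $\omega_k^{\theta, \pm\infty}$ vanish and $\omega_k^\theta = \wt{u}_k$, so $\|\rho_k^\theta\|_{X^{\frac{1}{2}}} \lesssim \theta_1$ directly by Proposition \ref{prop Euclidean profiles}(2). On each scattering region, $\omega_k^\theta$ vanishes and the remainder is controlled by Proposition \ref{prop Euclidean profiles}(2) plus the high-frequency tail $\|(1 - P_{\leq R_\theta N_k}) \Pi_{t_k-t, x_k} T_{N_k} \psi^{\pm\infty}\|_{X^{\frac{1}{2}}}$, which tends to $0$ as $R_\theta \to \infty$ since $T_{N_k} \psi^{\pm\infty}$ is essentially localized at frequencies $\lesssim N_k$ when $\psi^{\pm\infty}$ is smooth. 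Choosing $\theta_1 \ll \theta$, then $T_\theta$ large, then $R_\theta$ large in that order yields all the required bounds. The main obstacle is verifying the pointwise derivative bounds on $\omega_k^\theta$ up to order $10$ including the mixed $\partial_t \nabla_x^m$ derivative: this forces extending the $H^5$ regularity statement of Theorem \ref{thm Yu} to $H^s$ for arbitrarily large $s$ (routine by iterating Duhamel in Strichartz spaces since $\psi \in C_0^\infty$) and systematically converting time derivatives into spatial ones via the equation, which is where most of the technical care resides.
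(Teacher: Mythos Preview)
Your approach is essentially the paper's: invoke Proposition \ref{prop Euclidean profiles} for both the concentrated and scattering approximations, use Lemma \ref{lem Extinction} for the $Z$-smallness of the scattering tails, and define $\rho_k^\theta$ as the remainder. Your explicit replacement of $\phi$ by $\psi \in C_0^\infty$ to secure arbitrarily high Sobolev regularity of $v$ (and hence the pointwise $C^{10}$ bounds on $\omega_k^\theta$) is in fact more transparent than the paper's one-line appeal to ``stability and Lemma \ref{lem Compare Profile}'' at that step.

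One concrete gap remains in your handling of the transition band. With your definitions, on $\{T_\theta N_k^{-2}/2 \leq |t-t_k| \leq T_\theta N_k^{-2}\}$ the scattering pieces $\omega_k^{\theta,\pm\infty}$ vanish identically while $\omega_k^\theta = \chi \cdot \wt u_k$ with $\chi$ descending from $1$ to $0$, so there $\rho_k^\theta = U_k - \chi\,\wt u_k \approx (1-\chi)\wt u_k$, which has $H^{1/2}$-norm of order one at the right endpoint and is therefore \emph{not} small in $X^{1/2}$. Your phrase ``absorbed into the overlap of the cutoffs'' suggests you sensed this, but the cutoffs as written do not overlap. The easy repair is either (i) use sharp time cutoffs as the paper does --- the pointwise bounds in \eqref{eq cor4.8} are only asserted on the interior of $S_k^\theta$, so the $\partial_t$ estimate survives --- or (ii) more in keeping with your smooth construction, extend $\omega_k^{\theta,\pm\infty}$ to $\{\pm(t-t_k) \geq T_\theta N_k^{-2}/2\}$ with the complementary cutoff $1-\chi$, and use that $\wt u_k$ and $\Pi_{t_k-t,x_k}T_{N_k}\psi^{\pm\infty}$ are $O(\theta_1)$-close on that band by the scattering of $v$ at time $\geq T_\theta/2$.
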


\begin{proof}[Proof of Corollary \ref{cor Decomp nonlinear Euclidean profiles}]
By Proposition \ref{prop Euclidean profiles}, there exists $T(\phi, \frac{\theta}{4})$, such that for all $T \geq T(\phi, \frac{\theta}{4})$, there exists $R(\phi, \frac{\theta}{4} , T)$ such that for all $R \geq R(\phi, \frac{\theta}{2} ,T)$, there holds that
\begin{align*}
\norm{U_k -\widetilde{u}_k}_{X^{\frac{1}{2}}(\{ \abs{t- t_k} \leq TN_k^{-2} \}  \cap \{\abs{t} \leq T^{-1} \})} \leq \frac{\theta}{2},
\end{align*} 
for $k$ large enough, where
\begin{align*}
(\pi_{-x_k} \widetilde{u}_k) (t,x) = N_k \eta(N_k \Psi^{-1}(x) R^{-1}) u(N_k^2(t-t_k) , N_k \Psi^{-1} (x)),
\end{align*}
where $u$ is a solution of \eqref{NLS} with scattering data $\phi^{\pm}$.

In addition, up to subsequence,
\begin{align*}
\norm{U_k - \Pi_{t_k-t, x_k} T_{N_k} \phi^{\pm}}_{X^{\frac{1}{2}} (\{ \pm (t-t_k) \geq TN_k^{-2}\} \cap \{ \abs{t} \leq T^{-1} \})} \leq \frac{\theta}{4},
\end{align*}
for $k$ large enough (depending on $\phi, \theta, T$ and $R$).

Choose a sufficiently large $T_{\theta} > T(\phi, \frac{\theta}{4})$ based on the extinction lemma (Lemma \ref{lem Extinction}), such that
\begin{align*}
\norm{e^{it\Delta} \Pi_{t_k, x_k} T_{N_k \phi^{\pm \infty}}}_{Z(T_{\theta}N_k^{-2} , T_{\theta}^{-1})} \leq \frac{\theta}{4} ,
\end{align*}
when $k$ large enough.

And then we choose $R_{\theta} = R(\phi, \frac{\theta}{2} , T_{\theta})$.
Denote:
\begin{enumerate}[\bf (1)]
\item
$\omega_k^{\theta, \pm \infty} : = \mathbf{1}_{ \{\pm(t-t_k) \geq T_{\theta}N_k^{-2} , \abs{t} \leq T_{\theta}^{-1} \}} (\Pi_{t_k -t, x_k} T_{N_k} \phi^{\theta, \pm \infty})$, where 
\begin{align*}
\norm{\phi^{\theta, \pm \infty}}_{\dot{H}^{\frac{1}{2}} (\R^2)} \lesssim 1 , \phi^{\theta, \pm \infty} = P_{\leq R_{\theta}} \phi^{\theta , \pm \infty},
\end{align*}
which implies $\omega_k^{\theta , \pm \infty} = P_{\leq R_{\theta} N_{\theta}} \omega_k^{\theta, \pm \infty}$.

\item
$\omega_k^{\theta}: = \widetilde{u}_k \cdot \mathbf{1}_{S_k^{\theta}}$.

By the stability and Lemma \ref{lem Compare Profile} we can adjust $\omega_k^{\theta}$ and $\omega_k^{\theta , \pm \infty}$ with an acceptable error, to make 
\begin{align*}
\abs{\nabla_x^m \omega_k^{\theta}} + N_k^{-2} \mathbf{1}_{S_k^{\theta}} \abs{\partial_t \nabla_x^m \omega_k^{\theta}} \leq R_{\theta} N_k^{\abs{m}+1} \mathbf{1}_{S_k^{\theta}} , \quad 0 \leq \abs{m} \leq 10 .
\end{align*}
\item
$\rho_k : = \mathbf{1}_{(-T_{\theta}^{-1} , T_{\theta}^{-1})} (t) U_k^{\alpha} - \omega_k^{\theta} -\omega^{\theta , \infty} - \omega^{\theta, -\infty} $.
\end{enumerate}
By Proposition \ref{prop Euclidean profiles}, we obtain that
\begin{align*}
\norm{\rho_k^{\theta} }_{X^{\frac{1}{2}} (\{ \abs{t} < T_{\theta}^{-1}\})} \leq \frac{\theta}{2},
\end{align*}
and then we have
\begin{align*}
& \norm{\omega_k^{\theta, \pm \infty}}_{Z' ([-T_{\theta}^{-1} , T_{\theta}^{-1} ])} + \norm{\rho_k^{\theta}}_{X^{\frac{1}{2}} ([-T_{\theta}^{-1} , T_{\theta}^{-1} ])} \leq \theta,\\
& \norm{\omega_k^{\theta, \pm \infty}}_{X^{\frac{1}{2}}([-T_{\theta}^{-1} , T_{\theta}^{-1}])} + \norm{\omega_k^{\theta}}_{X^{\frac{1}{2}} ([-T_{\theta}^{-1} , T_{\theta}^{-1}])} \lesssim 1.
\end{align*}
Now we finish the proof of Corollary \ref{cor Decomp nonlinear Euclidean profiles}.
\end{proof}

\section{Profile decomposition}\label{sec Profile decomp}

In this section, we construct the profile decomposition on $\T^2$ for linear
Schr\"odinger equations. In particular, in Lemma \ref{prop:almostorth} we show the almost  orthogonality of nonlinear profiles which will be used in next section.

As in the previous section, given $f \in L^2(\R^2)$, $t_0 \in \R$, and
$x_0 \in \T^2$, we define:
\begin{align*}
(\Pi_{t_0, x_0} ) f(x) &:= (e^{-it_0 \DD} f)(x-x_0)\\
T_N \phi (x) &:= N^{\frac{1}{2}} \widetilde{\phi} (N \Psi^{-1}(x)),
\end{align*}
where $\widetilde{\phi}(y):= \eta(\frac{y}{N^{\frac{1}{2}}})\phi(y)$.

Observe that $T_N : \dot{H}^{\frac{1}{2}}(\R^2) \to H^{\frac{1}{2}}(\T^2)$ is a linear operator with $\norm{T_N \phi}_{H^{\frac{1}{2}}(\T^2)}\lesssim \norm{\phi}_{\dot{H}^{\frac{1}{2}}(\R^2)}$.

\begin{defn}[Euclidean frames]
\begin{enumerate}[\bf (1)]
\item We define a Euclidean frame to be a sequence $\mathcal{F}_e =(N_k, t_k, x_k)_k$ with $N_k\geq 1$, $N_k\to +\infty$, $t_k\in\R$, $t_k\to 0$, $x_k\in\T^2$. We say that two frames, $(N_k, t_k, x_k)_k$ and $(M_k, s_k, y_k)_k$ are orthogonal if
\begin{align*}
\lim_{k\to+\infty} \parenthese{ \ln \abs{\frac{N_k}{M_k}}+ N_k^2 \abs{t_k-s_k} + N_k\abs{x_k-y_k} } =\infty.
\end{align*}
Two frames that are not orthogonal are called equivalent.

\item If $\mathcal{O} =(N_k, t_k, x_k)_k$ is a Euclidean frame and if  $\phi\in\dot{H}^{\frac{1}{2}}(\R^2)$, we define the Euclidean profile associated to  $(\phi, \mathcal{O})$ as the sequence $\widetilde{\phi}_{\mathcal{O}_k}$:
\begin{align*}
\widetilde{\phi}_{\mathcal{O}_k} := \Pi_{t_k, x_k} (T_{N_k}\phi).
\end{align*}
\end{enumerate}
\end{defn}

\begin{prop}[Equivalence of frames \cite{IP2}]\label{prop:equivalenceFrames}
\begin{enumerate}[\bf (1)]
\item
If $\mathcal{O}$ and $\mathcal{O}'$ are equivalent Euclidean frames, then there
exists an isometry $T: \dot{H}^{\frac{1}{2}}(\R^2) \to \dot{H}^{\frac{1}{2}}(\R^2)$ such that for any profile $\widetilde{\phi}_{\mathcal{O}'_k}$, up to a subsequence there holds that
\begin{align*}
\limsup_{k\to\infty} \norm{\widetilde{T \phi}_{\mathcal{O}_k} - \widetilde{\phi}_{\mathcal{O}'_k}}_{H_x^{\frac{1}{2}}(\T^2)} = 0.
\end{align*}

\item
If $\mathcal{O}$ and $\mathcal{O}'$ are orthogonal Euclidean frames and $\widetilde{\phi}_{\mathcal{O}_k}$, $\widetilde{\phi}_{\mathcal{O}'_k}$ are corresponding profiles, then, up to a subsequence:
\begin{align}\label{eq:almostorth1}
\lim_{k\to\infty} \inner{ \widetilde{\phi}_{\mathcal{O}_k},  \widetilde{\phi}_{\mathcal{O}'_k}}_{H^{\frac{1}{2}}\times H^{\frac{1}{2}}(\T^2)} = 0;\\\label{eq:almostorth2}
\lim_{k\to\infty} \inner{ \abs{\widetilde{\phi}_{\mathcal{O}_k}}^3,  \abs{\widetilde {{\phi}}_{\mathcal{O}'_k}}^3}_{L^2\times L^2(\T^2)} = 0.
\end{align}
\end{enumerate}
\end{prop}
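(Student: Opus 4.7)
The plan is to reduce both parts to statements about Euclidean profiles on $\R^2$ by pulling back through $\Psi$ and rescaling, exploiting the fact that for $k$ large the profiles $\widetilde\phi_{\mathcal{O}_k}$ are supported in arbitrarily small coordinate neighborhoods of the concentration point and at an arbitrarily fine spatial scale, so that the $\T^2$ linear flow is indistinguishable from the $\R^2$ linear flow over the relevant rescaled time window, exactly in the spirit of Lemma \ref{lem Compare Profile}.

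For Part (1), equivalence allows me to pass to a subsequence along which
$$\frac{N_k}{M_k} \to \alpha \in (0,\infty), \qquad M_k^2(t_k - s_k) \to \tau_0 \in \R, \qquad M_k(x_k - y_k) \to \xi_0 \in \R^2.$$
I would then define the isometry $T: \dot{H}^{\frac{1}{2}}(\R^2) \to \dot{H}^{\frac{1}{2}}(\R^2)$ by
$$(T\phi)(y) := \alpha^{\frac{1}{2}}\bigl(e^{i\tau_0 \Delta_{\R^2}} \phi\bigr)(\alpha y + \xi_0),$$
a composition of the $\dot{H}^{\frac{1}{2}}$-scaling, a Euclidean linear flow, and a translation, each of which is an isometry. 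Working in rescaled Euclidean coordinates $z = M_k \Psi^{-1}(x - y_k)$, a direct change-of-variables computation expresses both $\widetilde{T\phi}_{\mathcal{O}_k}$ and $\widetilde\phi_{\mathcal{O}'_k}$ as the same function of $z$ up to (i) the physical-space $\eta$-cutoffs at different scales (both converging to $1$ in $\dot{H}^{\frac{1}{2}}$ by dominated convergence), (ii) a residual Euclidean time shift $M_k^2(t_k - s_k) - \tau_0 \to 0$, and (iii) a residual translation $M_k(x_k - y_k) - \xi_0 \to 0$. Continuity of translation and of the linear Euclidean flow on $\dot{H}^{\frac{1}{2}}$ then yields the claim, once one further checks (as in Lemma \ref{lem Compare Profile}) that the $\T^2$-evolution and the $\R^2$-evolution of the truncated profile agree in $H^{\frac{1}{2}}(\T^2)$ up to an error $o_k(1)$.

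For Part (2), orthogonality forces at least one of $|\ln(N_k/M_k)|$, $N_k^2|t_k - s_k|$, or $N_k|x_k - y_k|$ to diverge, and I would handle three corresponding cases. \textbf{Frequency separation:} if $|\ln(N_k/M_k)| \to \infty$, the physical $\eta$-cutoff smears $\widehat{T_{N_k}\phi}$ only at scale $N_k^{1/2} \ll N_k$, so the Fourier supports of the two profiles become essentially disjoint, giving \eqref{eq:almostorth1} by Plancherel; \eqref{eq:almostorth2} follows via Bernstein at the two scales combined with H\"older's inequality, which produces a decaying factor $\min(N_k/M_k, M_k/N_k)^{\kappa}$. \textbf{Spatial separation at matched scale} ($N_k \sim M_k$, $N_k|x_k - y_k| \to \infty$): after pulling back to $\R^2$ and rescaling to unit frequency, the two Euclidean profiles are translated from each other by a divergent amount, so both pairings vanish by approximating in $\dot{H}^{\frac{1}{2}}(\R^2)$ by compactly supported functions. \textbf{Temporal separation at matched scale} ($N_k \sim M_k$, $N_k|x_k - y_k| = O(1)$, $N_k^2|t_k - s_k| \to \infty$): the rescaled Euclidean flow $e^{i N_k^2(s_k - t_k)\Delta_{\R^2}}$ applied to a fixed $\dot{H}^{\frac{1}{2}}(\R^2)$ function decays in $L^p$ for every $p > 2$ by the standard $\R^2$ dispersive estimate, which immediately gives \eqref{eq:almostorth2}, while \eqref{eq:almostorth1} follows by a frequency truncation combined with the same dispersive decay on the truncated piece and continuity on the tail.

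The main obstacle is the bookkeeping in the frequency-separation case, where the $H^{\frac{1}{2}}(\T^2)$ pairing is naturally expressed via the lattice Fourier transform while the almost-disjoint-support argument lives on the Euclidean Fourier side. The standard remedy, developed in \cite{IP2} and already used implicitly in Section \ref{sec Euclidean profiles}, is to perform the comparison inside the coordinate chart $\Psi$ and to estimate the $\eta$-induced spectral spread by a direct Fourier computation combined with the Poisson summation formula; once this device is set up, the remaining cases are routine translations of classical Euclidean orthogonality arguments, and only a finite subsequence-extraction is needed throughout.
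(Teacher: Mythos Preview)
The paper does not prove this proposition; its entire proof reads ``See Lemma 5.4 in \cite{IPS} for the proof.'' Your proposal is the standard argument that one finds in that reference (and in \cite{IP1,IP2}): extract limiting parameters in the equivalent case and build $T$ from scaling, translation, and Euclidean linear flow; in the orthogonal case, split into frequency, spatial, and temporal separation and reduce via the chart $\Psi$ to the corresponding Euclidean orthogonality statements. So your approach is correct and matches the cited literature; there is nothing further to compare against in the present paper.

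One small caution on the temporal-separation subcase for \eqref{eq:almostorth1}: since the linear flow is unitary on $\dot H^{1/2}$, dispersive decay alone cannot make the pairing vanish. The correct mechanism (which your parenthetical ``frequency truncation \dots and continuity on the tail'' gestures at) is to approximate each profile by a function whose Fourier transform is smooth and compactly supported, rewrite the $\dot H^{1/2}$ pairing as an $L^2$ pairing of $|\nabla|^{1/2}$-weighted functions, and then use the $L^1\!\to\!L^\infty$ dispersive estimate on the \emph{approximants}; the error from the approximation is uniformly small in $k$. If you spell this out, the sketch is complete.
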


\begin{proof}[Proof of Proposition \ref{prop:equivalenceFrames}]
See Lemma 5.4 in \cite{IPS} for the proof.
\end{proof}

\begin{defn}[Definition 5.3 \cite{IP1}]
We say that a sequence of functions $\{ f_k\}_k \subset H^{\frac{1}{2}} (\T^2)$ is absent from a frame $ \OO$ if, up to a subsequence, for every profile $\psi_{\OO_k}$ associated to $\OO$, 
\begin{align*}
& \lim_{k\to\infty} \inner{ f_k,  \wt{\psi}_{\OO_k}}_{L^2\times L^2(\T^2)} = 0 ,\\
& \lim_{k\to\infty} \inner{ f_k,  \wt{\psi}_{\OO_k}}_{H^{\frac{1}{2}}\times H^{\frac{1}{2}}(\T^2)} = 0 .
\end{align*}
\end{defn}

\begin{lem}[Refined Strichartz inequality]\label{lem Refined}
Let $f\in H^1(\T^2)$ and $I\subset [0,1]$. Then 
\begin{align*}
\norm{e^{it\Delta} f}_{Z(I)} \lesssim \norm{f}_{H_x^1(\T^2)}^{\frac{5}{6}} \sup_{N\in 2^{\Z}} (N^{-1} \norm{P_N e^{it\Delta}f}_{L_{t,x}^{\infty}(I\times\T^2)})^{\frac{1}{6}}.
\end{align*}
\end{lem}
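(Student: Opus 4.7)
The plan is to bound each summand in the definition of the $Z$-norm separately by $\|f\|_{H^1}^{5/6} A^{1/6}$, where $A := \sup_{M \text{ dyadic}} M^{-1} \|P_M e^{it\Delta} f\|_{L_{t,x}^\infty(I \times \T^2)}$. Fix $p \in \{p_0, p_1\}$, an interval $J \subset I$ with $|J| \le 1$, and a dyadic frequency $N$. Three ingredients will control $P_N e^{it\Delta} f$ on $J \times \T^2$: $L^2$-unitarity combined with $|J| \le 1$ yields $\|P_N e^{it\Delta} f\|_{L^2_{t,x}} \le \|P_N f\|_{L^2}$; Lemma \ref{lem Strichartz1} yields, for any $q > 4$, $\|P_N e^{it\Delta} f\|_{L^q_{t,x}} \lesssim N^{1-4/q} \|P_N f\|_{L^2}$; and by definition of $A$, $\|P_N e^{it\Delta} f\|_{L^\infty_{t,x}} \le NA$.

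Next I will apply a three-term H\"older inequality in $(t,x)$: for nonnegative $a,b,c$ with $a+b+c = p$ and $a/2 + b/q = 1$,
\[
\|P_N e^{it\Delta} f\|_{L^p_{t,x}}^p \le \|P_N e^{it\Delta} f\|_{L^2}^a \|P_N e^{it\Delta} f\|_{L^q}^b \|P_N e^{it\Delta} f\|_{L^\infty}^c.
\]
Inserting the three bounds above and using $b/q = 1 - a/2$ to collapse the net $N$-exponent produces
\[
N^{4-p/2} \|P_N e^{it\Delta} f\|_{L^p_{t,x}}^p \lesssim N^{a + p/2}\, A^c\, \|P_N f\|_{L^2}^{a+b}.
\]
I then set $c = p/6$, forcing $a+b = 5p/6$ (to eventually produce $A^{1/6}\|f\|_{H^1}^{5/6}$ after a $p$-th root), and select $q > 4$ so that $a \le p/3$ (a short algebraic manipulation reduces this to $q \le 3p/(6-p)$ when $p < 6$, while any $q \ge 5p/6$ works when $p \ge 6$). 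For $p = p_1 = 82$ the extreme choice $a = 0$, $b = q = 5p_1/6 \approx 68.3$ is admissible; for $p = p_0 = 4.1$ the inequality $5p_0/6 < 4$ rules out $a=0$, so I take for instance $q = 5$, which gives $a = (6q - 5p_0)/(3(q-2)) \approx 1.06$ and $b \approx 2.36$.

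Summing over $N$ will use the high-frequency bound $\|P_N f\|_{L^2} \lesssim \langle N\rangle^{-1} \|f\|_{H^1}$. Writing $\|P_N f\|_{L^2}^{5p/6} \le \|P_N f\|_{L^2}^2 \cdot (\|f\|_{H^1}/\langle N \rangle)^{5p/6-2}$ and noting $a + p/2 - (5p/6 - 2) \le 2$ by our choice $a \le p/3$,
\[
\sum_N N^{a+p/2} \|P_N f\|_{L^2}^{5p/6} \lesssim \|f\|_{H^1}^{5p/6 - 2} \sum_N \langle N\rangle^2 \|P_N f\|_{L^2}^2 \lesssim \|f\|_{H^1}^{5p/6}.
\]
Taking the $p$-th root gives $A^{1/6}\|f\|_{H^1}^{5/6}$, uniformly in $J$; summing over the two values of $p$ completes the proof. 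The main technical obstacle is the $p = p_0 = 4.1$ case: a naive two-term Strichartz--$L^\infty$ interpolation would require $q = 5p_0/6 \approx 3.42$, below the Bourgain--Demeter threshold $q > 4$, so no choice of $q$ can simultaneously produce the exponent $1/6$ on $A$ and invoke Lemma \ref{lem Strichartz1}. Inserting $L^2_{t,x}$, controlled by mass conservation on the unit time interval, as a third interpolation endpoint is precisely what circumvents this obstruction.
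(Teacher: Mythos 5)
Your proof is correct; the exponent bookkeeping closes. The net power of $N$ after the three-term H\"older, Strichartz, and the $L^\infty$ bound is indeed $a+p/2$; with $c=p/6$ the summability condition is $a\le p/3$, equivalently $q\le 3p/(6-p)$ for $p<6$; and your choices ($q=5$, $a\approx 1.06$, $b\approx 2.36$ for $p_0=4.1$, and $a=0$, $b=q=5p_1/6$ for $p_1=82$) satisfy all the constraints, including the implicit ones $a\le 2$ and $b\le q$ needed for the H\"older exponents to be admissible. The paper's argument has the same skeleton --- frequency-localized interpolation against $A=\sup_N N^{-1}\|P_Ne^{it\Delta}f\|_{L^\infty}$, Bourgain--Demeter Strichartz, and square-summation of $\|P_Nf\|_{H^1}$ --- but it uses only the two-term interpolation $\|g\|_{L^p}\le\|g\|_{L^{ap}}^{a}\|g\|_{L^\infty}^{1-a}$ with the Strichartz constraint $ap>4$. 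As you correctly diagnose, for $p_0=4.1$ this forces $1-a<0.1/4.1<1/6$, and since $A\lesssim\|f\|_{H^1}$ a bound with a smaller power of $A$ does not imply the stated one, so the two-term route cannot reach the exponent $1/6$ on the $p_0$ component. (The paper's written proof in fact carries over the $L^6$-based computation from the energy-critical setting and terminates with $\|f\|_{H^{1/2}}^{a}$, so it does not literally establish the lemma with the stated exponents.) Your insertion of the trivial bound $\|P_Ne^{it\Delta}f\|_{L^2_{t,x}(J\times\T^2)}\le\|P_Nf\|_{L^2}$ as a third interpolation endpoint is exactly the right repair: it supplies integrability below the $L^4$ Strichartz threshold at no cost in powers of $N$, which is what the low exponent $p_0=4.1$ demands; for $p_1=82$ your choice degenerates to the paper's two-term argument, consistently.
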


\begin{proof}[Proof of Lemma \ref{lem Refined}]
By the definition of $Z$-norm,
\begin{align*}
\norm{e^{it\Delta} f}_{Z(I)} = \parenthese{\sum_N N \norm{P_N e^{it\Delta} f}_{L^6_{t,x}}^6 }^{\frac{1}{6}} = \norm{N^{\frac{1}{6}} \norm{P_N e^{it\Delta}f}_{L^6_{t,x}} }_{l^6_N}.
\end{align*}
By H\"{o}lder inequality,  Lemma \ref{lem Strichartz1} and Lemma \ref{lem Strichartz2}, we have that for any $0< a < 1$ and $4<ap<\infty$, 
\begin{align*}
\norm{N^{\frac{4}{p}-\frac{1}{2}} \norm{P_N e^{it\Delta}f}_{L_{t,x}^p} }_{l_N^p} &  \lesssim  \norm{  \parenthese{N^{\frac{4}{ap}-\frac{1}{2}} \norm{P_Ne^{it\Delta}f}_{L^{ap}_{t,x}}}^{a} \parenthese{N^{-\frac{1}{2}}   \norm{P_N e^{it\Delta} f}_{L_{t,x}^{\infty}}}^{1-a}}_{l_N^p}\\
&  \lesssim  \sup_{N} \parenthese{ N^{-1} \norm{P_N e^{it\Delta} f }_{L_{t,x}^{\infty}} }^{1-a}  \parenthese{ \sum_N N^{4-\frac{ap}{2}} \norm{P_N e^{it\Delta} f }_{L_{t,x}^{ap}}^{ap} }^{\frac{1}{p}}\\
&  \lesssim  \sup_{N} \parenthese{ N^{-1} \norm{P_N e^{it\Delta} f }_{L_{t,x}^{\infty}} }^{1-a}  \parenthese{ \sum_N N^{4-\frac{ap}{2}}  N^{ap(1-\frac{4}{ap} -\frac{1}{2})}\norm{P_N f }_{H^{\frac{1}{2}}}^{ap}}^{\frac{1}{p}}\\
&  \lesssim  \sup_{N} \parenthese{N^{-1} \norm{P_N e^{it\Delta} f}_{L^\infty_{t,x}} }^{1-a}  \norm{f}_{H_x^{\frac{1}{2}}(\T^2)}^{a} . 
\end{align*}
We finish the proof of Lemma \ref{lem Refined}.
\end{proof}

\begin{rmk}
In fact, Lemma \ref{lem Refined} implies that the $Z$-norm of $e^{it\Delta} f$ is weaker than  $\norm{N^{-1} \norm{P_N e^{it\Delta}f}_{L_{t,x}^{\infty}}}_{l_N^{\infty}}$. Hence once we control $\norm{N^{-1} \norm{P_N e^{it\Delta}f}_{L_{t,x}^{\infty}}}_{l_N^{\infty}}$, then we have the control of $Z$-norm of $e^{it\Delta} f$. This fact will be used in Proposition \ref{prop:ProfileDecomposition} and Lemma \ref{lem 5.6}, where we state the smallness in $Z$-norm of $e^{it\Delta} f$, but in fact we will control $\norm{N^{-1} \norm{P_N e^{it\Delta}f}_{L_{t,x}^{\infty}}}_{l_N^{\infty}}$ in their proofs.
\end{rmk}

\begin{prop}[Profile decomposition]\label{prop:ProfileDecomposition}
Consider $\{ f_k\}_{k}$ a sequence of functions in $H^{\frac{1}{2}}(\T^2)$ and $0<A<\infty$ satisfying
\begin{align*}
 \limsup_{k\to +\infty} \norm{f_k}_{H^{\frac{1}{2}}(\T^2)} \leq A
\end{align*}
and a sequence of intervals $I_k=(-T_k, T^k)$ such that $\abs{I_k} \to 0$ as $k\to\infty$. Up to passing to a subsequence, assume that $f_k \rightharpoonup g \in H^{\frac{1}{2}}(\T^2)$. There exists $J^*\in \N$, and a sequence of profile $ \widetilde{\psi}_{\mathcal{O}_k^{\alpha}}^{\alpha}$  associated to pairwise orthogonal Euclidean frames $\mathcal{O}^{\alpha}$ and $\psi^{\alpha} \in {H}^{\frac{1}{2}}(\R^2)$ such that extracting a subsequence, for every $0 \leq J\leq J^*$, we have
\begin{align}\label{5.1}
f_k = g + \sum_{1\leq \alpha \leq J} \widetilde{\psi}_{\mathcal{O}_k^{\alpha}}^{\alpha}  + R_k^J
\end{align}
where $R^J_k$ is small in the sense that
\begin{align}\label{5.2}
\limsup_{J \to J^*}\limsup_{k\to\infty}\norm{e^{it\Delta}R_k^J}_{Z(I_k)} = 0.
\end{align}
Besides, we also have the following orthogonality relations:
\begin{align*}
& \norm{f_k}_{L^2}^2 = \norm{g}_{L^2}^2 + \norm{R_k^J}_{L^2}^2 + o_k (1),\\
& \norm{f_k}_{\dot{H}^{\frac{1}{2}}}^2  = \norm{g}_{\dot{H}^{\frac{1}{2}}}^2 + \sum_{\alpha \leq J} \norm{\psi^{\alpha}}_{\dot{H}^{\frac{1}{2}} (\R^2)}^2 + \norm{R_k^J}_{\dot{H}^{\frac{1}{2}}}^2 + o_k(1)  .
\end{align*}
\end{prop}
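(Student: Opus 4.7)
The plan is to follow the now-classical profile extraction algorithm of Bahouri--G\'erard and Keraani, adapted to the torus as in \cite{IP1, IPS}. Start with $R_k^0 := f_k - g$, which is bounded in $H^{\frac{1}{2}}(\T^2)$ and satisfies $R_k^0 \rightharpoonup 0$. I would use Lemma \ref{lem Refined} as the extraction engine: if $\limsup_k \norm{e^{it\Delta} R_k^0}_{Z(I_k)} \geq \varepsilon > 0$, then Lemma \ref{lem Refined} forces a dyadic frequency $N_k^1$ with $(N_k^1)^{-1} \norm{P_{N_k^1} e^{it\Delta} R_k^0}_{L_{t,x}^\infty (I_k \times \T^2)} \gtrsim \varepsilon^c$ along a subsequence, which in turn yields $(t_k^1, x_k^1) \in I_k \times \T^2$ realizing this pointwise bound. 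The weak convergence $R_k^0 \rightharpoonup 0$ excludes the scale-one case, forcing $N_k^1 \to \infty$; coupled with $t_k^1 \in I_k$ and $|I_k| \to 0$, the triple $\OO^1 = (N_k^1, t_k^1, x_k^1)_k$ is a Euclidean frame.

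Next, I would extract the first profile by defining the renormalized sequence $g_k^1(y) := (N_k^1)^{-\frac{1}{2}} (e^{it_k^1 \Delta} R_k^0)(x_k^1 + (N_k^1)^{-1} \Psi(y))$, which is bounded in $\dot{H}^{\frac{1}{2}}(\R^2)$, extract (after subsequence) a weak limit $\psi^1 \in \dot{H}^{\frac{1}{2}}(\R^2)$, and check that the pointwise lower bound transfers into $\norm{\psi^1}_{\dot{H}^{\frac{1}{2}}(\R^2)} \gtrsim \varepsilon^{c'}$. Set $\wt{\psi}^1_{\OO_k^1} := \Pi_{t_k^1, x_k^1} T_{N_k^1} \psi^1$ and $R_k^1 := R_k^0 - \wt{\psi}^1_{\OO_k^1}$. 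A direct Fourier-side computation using the weak convergence of $g_k^1$ and the concentration scale $N_k^1 \to \infty$ then gives the asymptotic Pythagorean identities
\begin{align*}
\norm{R_k^0}_{\dot{H}^{\frac{1}{2}}}^2 = \norm{\psi^1}_{\dot{H}^{\frac{1}{2}}(\R^2)}^2 + \norm{R_k^1}_{\dot{H}^{\frac{1}{2}}}^2 + o_k(1),
\end{align*}
and the analogous $L^2$ identity (with the profile contribution absorbed into $o_k(1)$ because $N_k^1 \to \infty$).

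Iterating the procedure on $R_k^1, R_k^2, \ldots$, I would extract at each stage a new profile $\wt{\psi}^\alpha_{\OO_k^\alpha}$ with size $\delta_\alpha := \norm{\psi^\alpha}_{\dot{H}^{\frac{1}{2}}(\R^2)} \gtrsim (\limsup_k \norm{e^{it\Delta} R_k^{\alpha-1}}_{Z(I_k)})^{c'}$. The crucial pairwise orthogonality of the frames $\OO^\alpha$ is enforced by contradiction: if a new frame were equivalent to a previous one, Proposition \ref{prop:equivalenceFrames}(1) would let us absorb the would-be profile into the earlier one, forcing $\psi^\alpha = 0$, a contradiction. The $\dot{H}^{\frac{1}{2}}$-orthogonality identity, telescoped, yields $\sum_\alpha \delta_\alpha^2 \leq A^2$, so $\delta_\alpha \to 0$. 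Combined with the quantitative bound $\delta_\alpha \gtrsim (\limsup_k \norm{e^{it\Delta} R_k^{\alpha-1}}_{Z(I_k)})^{c'}$, this forces $\limsup_{J \to J^*} \limsup_k \norm{e^{it\Delta} R_k^J}_{Z(I_k)} = 0$.

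The main obstacle is the \emph{quantitative} bridge from the $Z$-norm of the current remainder back to the size of the next extracted profile. Lemma \ref{lem Refined} provides the correct pivot (reducing $Z$-control to the scale-invariant quantity $\sup_N N^{-1} \norm{P_N e^{it\Delta} R_k^J}_{L_{t,x}^\infty}$), but one must still argue that concentration at scale $N_k^{\alpha+1}$ with $(t_k^{\alpha+1}, x_k^{\alpha+1})$ actually produces a nontrivial weak limit in $\dot{H}^{\frac{1}{2}}(\R^2)$ and that this limit is orthogonal to all previously extracted profiles. This combines the weak convergence $R_k^J \rightharpoonup 0$ in $H^{\frac{1}{2}}(\T^2)$, which is why extracting $g$ at the outset is essential, with the almost-orthogonality identities \eqref{eq:almostorth1}--\eqref{eq:almostorth2} to handle cross terms. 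A secondary technical point is the interplay between the shrinking intervals $I_k$ and the requirement $t_k \to 0$ in the definition of a Euclidean frame; this is automatic since $t_k^\alpha \in I_k$ and $|I_k| \to 0$.
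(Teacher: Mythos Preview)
Your proposal is correct and follows essentially the same approach as the paper: the refined Strichartz inequality (Lemma~\ref{lem Refined}) is used to extract Euclidean frames from remainders that do not decay in $Z$, the weak convergence $R_k^0 \rightharpoonup 0$ forces $N_k \to \infty$, and the $\dot{H}^{\frac{1}{2}}$ Pythagorean expansion bounds the number of large profiles. The only organizational difference is that the paper packages one round of extraction at a fixed threshold $\delta$ into a separate lemma (Lemma~\ref{lem 5.6}, yielding $O(\delta^{-2})$ profiles) and then proves the proposition by applying that lemma inductively with $\delta_l = 2^{-l}$, whereas you describe the full diagonal iteration directly; also note that in the paper's actual argument the concentration quantity is $N^{-\frac{1}{2}}\|P_N e^{it\Delta} R_k\|_{L^\infty_{t,x}}$ (matching $H^{\frac{1}{2}}$ scaling), not $N^{-1}$ as the statement of Lemma~\ref{lem Refined} suggests.
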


In fact, the proof of Proposition \ref{prop:ProfileDecomposition} relies on applying  inductively the following Lemma \ref{lem 5.6}. 
\begin{lem}\label{lem 5.6}
Consider $\{ f_k\}_k$ a sequence of functions in $H^{\frac{1}{2}} (\T^2)$ and $ 0 < E < \infty$ satisfying
\begin{align}\label{eq 5.5}
\limsup_{k \to \infty} \norm{f_k}_{H^{\frac{1}{2}} (\T^2)} \leq E
\end{align}
and a sequence of intervals $I_k = (-T_k, T^k)$ such that $\abs{I_k} \to 0$ as $k \to \infty$. Fix $\delta > 0$, there exists $J \lesssim \delta^{-2}$ profiles $\wt{\psi}_{\OO_k^{\alpha}}^{\alpha}$ associated to pairwise orthogonal Euclidean frames $\OO^{\alpha}$, $\alpha =1 , 2, \cdot , J$, such that
\begin{align*}
f_k = g + \sum_{1 \leq \alpha \leq J} \wt{\psi}_{\OO_k^{\alpha}}^{\alpha} + R_k
\end{align*}
where $R_k$ is small in the sense that
\begin{align}\label{eq small R}
\limsup_{k \to \infty} \norm{e^{it\Delta} R_k}_{Z(I_k)} \leq \delta
\end{align}
Besides, we also have
\begin{align*}
& \norm{f_k}_{L^2}^2 = \norm{g}_{L^2}^2 + \norm{R_k}_{L^2}^2 + o_k (1),\\
& \norm{f_k}_{\dot{H}^{\frac{1}{2}}}^2  = \norm{g}_{\dot{H}^{\frac{1}{2}}}^2 + \sum_{\alpha \leq J} \norm{\psi^{\alpha}}_{\dot{H}^{\frac{1}{2}} (\R^2)}^2 + \norm{R_k}_{\dot{H}^{\frac{1}{2}}}^2 + o_k(1).
\end{align*}

\end{lem}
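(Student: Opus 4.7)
The plan is to prove Lemma \ref{lem 5.6} by iterative extraction of profiles, using the refined Strichartz inequality (Lemma \ref{lem Refined}) to locate each concentration. First, I would pass to a subsequence so that $f_k \rightharpoonup g$ weakly in $H^{\frac{1}{2}}(\T^2)$, and set $h_k^0 := f_k - g$. At each stage $\alpha \geq 1$, if $\limsup_k \|e^{it\Delta} h_k^{\alpha-1}\|_{Z(I_k)} \leq \delta$ I stop; otherwise I use Lemma \ref{lem Refined}, whose proof in fact controls $Z$ by $\sup_N N^{-1}\|P_N e^{it\Delta} h_k^{\alpha-1}\|_{L^\infty_{t,x}}$, to find a dyadic scale $N_k^\alpha$, a time $t_k^\alpha \in I_k$, and a point $x_k^\alpha \in \T^2$ such that
\begin{equation*}
(N_k^\alpha)^{-1}\bigl|\bigl(P_{N_k^\alpha} e^{it_k^\alpha\Delta} h_k^{\alpha-1}\bigr)(x_k^\alpha)\bigr| \gtrsim \delta \, E^{-5}.
\end{equation*}

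Next, from this concentration I would extract a Euclidean profile. After passing to a subsequence I may assume either $N_k^\alpha \to \infty$ (Euclidean-type concentration) or $N_k^\alpha$ stays bounded (which is absorbed into $g$ after relabeling, by Rellich-type compactness since $|I_k|\to 0$). In the unbounded case, the rescaled and translated functions $\phi_k^\alpha(y) := (N_k^\alpha)^{-\frac{1}{2}} \bigl(e^{-it_k^\alpha\Delta} h_k^{\alpha-1}\bigr)(x_k^\alpha + \Psi(y/N_k^\alpha))$, after localization by $\eta$, form a bounded sequence in $\dot H^{\frac{1}{2}}(\R^2)$; I extract a weak limit $\psi^\alpha$ along a subsequence. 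A standard argument shows $\|\psi^\alpha\|_{\dot H^{\frac{1}{2}}(\R^2)}^2 \gtrsim \delta^2 E^{-10}$, because evaluating the $\dot H^{\frac{1}{2}}$ inner product against a fixed test function (a suitable Littlewood–Paley localized bump) reproduces the concentration bound. Set $\mathcal{O}^\alpha = (N_k^\alpha, t_k^\alpha, x_k^\alpha)_k$, define $\widetilde{\psi}^\alpha_{\mathcal{O}_k^\alpha} := \Pi_{t_k^\alpha, x_k^\alpha}(T_{N_k^\alpha}\psi^\alpha)$, and put $h_k^\alpha := h_k^{\alpha-1} - \widetilde{\psi}^\alpha_{\mathcal{O}_k^\alpha}$.

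Iterating, I would verify the two key properties at each stage. First, Pythagorean orthogonality in $\dot H^{\frac{1}{2}}$: $\|h_k^{\alpha-1}\|_{\dot H^{\frac{1}{2}}}^2 = \|\widetilde{\psi}^\alpha_{\mathcal{O}_k^\alpha}\|_{\dot H^{\frac{1}{2}}}^2 + \|h_k^\alpha\|_{\dot H^{\frac{1}{2}}}^2 + o_k(1)$, which follows from weak convergence of the rescaled $h_k^{\alpha-1}$ to $\psi^\alpha$ after the change of variables. Since $\|\widetilde{\psi}^\alpha_{\mathcal{O}_k^\alpha}\|_{\dot H^{\frac{1}{2}}}^2 \gtrsim \delta^2 E^{-10}$ and the total $\dot H^{\frac{1}{2}}$ budget is bounded by $E^2$, the iteration terminates after $J \lesssim \delta^{-2}$ steps, yielding \eqref{eq small R}. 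The $L^2$ orthogonality follows by analogous computation combined with $|I_k|\to 0$, which ensures the $L^2$ mass of Euclidean profiles at distinct scales is asymptotically orthogonal.

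The second property, pairwise orthogonality of the frames $\{\mathcal{O}^\alpha\}$, is where I expect the main obstacle. The plan is to argue by contradiction: if $\mathcal{O}^\alpha$ and $\mathcal{O}^\beta$ ($\alpha < \beta$) were equivalent, then by Proposition \ref{prop:equivalenceFrames}(1) there would exist an isometry $T$ on $\dot H^{\frac{1}{2}}(\R^2)$ with $\widetilde{\psi}^\beta_{\mathcal{O}_k^\beta}$ asymptotically equal to $\widetilde{T\phi}_{\mathcal{O}_k^\alpha}$ for some $\phi$, and the extraction procedure would have already captured this concentration at stage $\alpha$, so that $h_k^\alpha$ would be asymptotically orthogonal to any profile along $\mathcal{O}^\alpha$. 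This forces $\psi^\beta = 0$, contradicting $\|\psi^\beta\|_{\dot H^{\frac{1}{2}}}^2 \gtrsim \delta^2 E^{-10}$. Carefully formalizing this absorption/absence argument across equivalent frames, and then using Proposition \ref{prop:equivalenceFrames}(2) to confirm the Pythagorean identities hold once orthogonality is established, is the most delicate piece; the rest is a diagonal subsequence extraction.
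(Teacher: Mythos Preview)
Your proposal is correct and follows essentially the same approach as the paper: extract the weak limit $g$, use the refined Strichartz inequality to locate a concentration point $(N_k,t_k,x_k)$, pull back to $\R^2$ and take a weak limit to obtain the profile $\psi^\alpha$, then iterate using the Pythagorean $\dot H^{1/2}$ identity until the procedure terminates after $O(\delta^{-2})$ steps. Two small differences in emphasis: the paper rules out bounded $N_k$ once and for all immediately after subtracting $g$ (since $f_k-g\rightharpoonup 0$ forces the concentration to vanish if $N_k$ stays bounded), rather than ``reabsorbing into $g$'' at each step; and for pairwise orthogonality of the frames the paper uses the formalism that each remainder $h_k^\alpha$ is \emph{absent} from the frame $\mathcal O^\alpha$ by construction and, inductively via Proposition~\ref{prop:equivalenceFrames}, from all earlier frames---this is equivalent to your contradiction argument but packages the ``delicate piece'' you flagged more cleanly.
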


\begin{proof}[Proof of Lemma \ref{lem 5.6}]
This proof is adapted from Lemma 5.4 in \cite{IP2}.

To proof \eqref{eq small R}, by the refined Strichartz estimates, it is sufficient to find $\{ \wt{\psi}_{\OO_k^{\alpha}}^{\alpha} \}_k$ for $1 \leq \alpha \leq J$ such that
\begin{align*}
\Omega (\{R_k \}) : = \limsup_{k \to \infty} \sup_{N \geq 1, t \in I_k, x \in \T^2} N^{-\frac{1}{2}} \abs{ (e^{it\Delta} P_N R_k) (x)} \leq \delta
\end{align*}
Notice that $H^{\frac{1}{2}} (\T^2)$ is a separable Hilbert space, hence up to a subsequence there exists a unique $g \in H^{\frac{1}{2}} (\T^2)$ such that for $\norm{f_k}_{H^{\frac{1}{2}}(\T^2)} \leq A$,
\begin{align*}
f_k \rightharpoonup g \quad \text{ in } H^{\frac{1}{2}} (\T^2) .
\end{align*}
And using $\inner{f_k -g , g} \to 0$ as $k \to \infty$, it is easy to see that
\begin{align*}
\norm{f_k}_{H^{\frac{1}{2}}(\T^2)}^2 = \norm{g}_{H^{\frac{1}{2}}(\T^2)}^2 + \norm{f_k -g}_{H^{\frac{1}{2}}(\T^2)}^2 .
\end{align*}
Similarly, we can also check that $f_k \rightharpoonup g $ in $L^2 (\T^2)$ and
\begin{align*}
\norm{f_k}_{L^2 (\T^2)}^2 = \norm{g}_{L^2 (\T^2)}^2 + \norm{f_k -g}_{L^2 (\T^2)}^2  .
\end{align*}
Denote that $\wt{f}_k := f_k -g$, then $\wt{f}_k \rightharpoonup  0$ in $H^{\frac{1}{2}} (\T^2)$. 
If $\Omega (\{ \wt{f}_k \}) < \delta$, we prove the lemma. Otherwise, if $\Omega (\{ \wt{f}_k \}) \geq \delta$, we can extract a frame from $\{ \wt{f}_k \}$. By the definition of $\Omega$, up to extracting a subsequence, there exists $(N_k , t_k, x_k)_k$ with $t_k \to 0$ and  $x_k - x_0 \in \T^2$ such that for all $k$
\begin{align*}
\frac{\delta}{2} & \leq N_k^{-\frac{1}{2}} \abs{(e^{it_n \Delta}) P_{N_k} \wt{f}_k (x_k)} = \abs{\inner{N_k^{-\frac{1}{2}} e^{it\Delta} P_{N_k} \wt{f}_k , \delta_{x_k}}_{} } \\
& \leq \abs{ \inner{f_k , N_k^{-\frac{1}{2}} e^{-it_n \Delta} P_{N_k} \delta_k }_{H^{\frac{1}{2}} \times H^{-\frac{1}{2}}} } .
\end{align*}
Now we claim that $N_k \to \infty$. In fact, if $N_k$ remains bounded, then up to a subsequence, one may assume that  
\begin{align*}
N_k \to N_{\infty} < \infty .
\end{align*}
by compactness of $\T^2$. In this case, we define $\psi = (1-\Delta)^{-\frac{1}{2}} N_{\infty}^{-\frac{1}{2}} P_{N_{\infty}} \delta_{x_0}$. By the continuity with respect to $(N,x,t) \in N^{-\frac{1}{2}} e^{-it\Delta} P_N \delta_x$, we have for $k $ large enough
\begin{align*}
\inner{\wt{f}_k , \psi}_{H^{\frac{1}{2}} \times H^{\frac{1}{2}}} \geq \frac{\delta}{4} .
\end{align*}
which contradicts $\wt{f}_k \rightharpoonup  0$ in $H^{\frac{1}{2}} (\T^2)$. We then define the Euclidean frame $\OO = (N_k , t_k, x_k)_k$, and the function
\begin{align*}
\psi = \mathcal{F}_{\R^2}^{-1} (\frac{1}{\abs{\xi}^{\frac{3}{2}}} (\eta(\xi) - \eta(2\xi)) e^{ix_0 \cdot \xi}) .
\end{align*}
Using the Parseval's identity and Sobolev embedding, we have
\begin{align*}
\lim_{k \to \infty} \norm{(1- \Delta)^{\frac{1}{2}} T_{N_k} \psi - N_k^{-\frac{1}{2}} P_{N_k} \delta_{x_0}}_{L^{\frac{4}{3}} (\T^2)} =0 ,
\end{align*}
and thus 
\begin{align*}
\lim_{k \to \infty} \norm{(1- \Delta)^{\frac{1}{2}} T_{N_k} \psi - N_k^{-\frac{1}{2}} P_{N_k} \delta_{x_0}}_{H^{-\frac{1}{2}}}  =0 ,
\end{align*}
then we conclude that
\begin{align}\label{eq 5.9}
\frac{\delta}{2} \lesssim \abs{\inner{ \wt{f}_k , N_k^{-\frac{1}{2}} e^{it\Delta} P_{N_k} \delta_{x_k}}_{H^{\frac{1}{2}} \times H^{-\frac{1}{2}} }} \lesssim \abs{ \inner{\wt{f}_k, \wt{\psi}_{\OO_k}}_{H^{\frac{1}{2}} \times H^{\frac{1}{2}}}}  .
\end{align}
Also it is easy to check that 
\begin{align}\label{eq 5.10}
\limsup_{k \to \infty} \norm{\wt{\psi}_{\OO_k}}_{H^{\frac{1}{2}}} \lesssim 1 .
\end{align}
Now we have selected a Euclidean frame $\OO$. 

For $R > 0$ and $k $ large enough, we consider for $y \in \R^2$
\begin{align*}
\phi_k^R (y) = N_k^{-\frac{1}{2}} \eta(\frac{y}{R}) (\Pi_{-t_k, -x_k} \wt{f}_k)(\psi(\frac{y}{N_k})) .
\end{align*}
This is a sequence of functions in the Hilbert space $\dot{H}^{\frac{1}{2}} (\R^2)$ that are  uniformly bounded in $R$. Hence we can extract a subsequence $\phi_k^R \rightharpoonup \phi^R \in \dot{H}^{\frac{1}{2}} (\R^2)$ satisfying $\norm{\phi^R}_{\dot{H}^{\frac{1}{2}} (\R^2)}$ for every $R$. 

By extracting a further subsequence, we may assume that $\phi^R \rightharpoonup \phi \in \dot{H}^{\frac{1}{2}} (\R^2)$ and by the uniqueness of the weak limit, we see that for every $R$, 
\begin{align*}
\phi^R (x) = \eta(\frac{x}{R}) \phi(x) .
\end{align*}
Now for any $\gamma \in C_0^{\infty} (\R^2)$ supporting in $B (0, \frac{R}{2}) \subset \R^2$ and for $k$ large enough, we have
\begin{align*}
\inner{f_k - \wt{\phi}_{\OO_k} , \wt{\gamma}_{\OO_k}}_{H^{\frac{1}{2}} \times H^{\frac{1}{2}} (\T^2)} & = \inner{\wt{f}_k , \wt{\gamma}_{\OO_k}}_{H^{\frac{1}{2}} \times H^{\frac{1}{2}}} - \inner{\wt{\phi}_{\OO_k} , \wt{\gamma}_{\OO_k}}_{H^{\frac{1}{2}} \times H^{\frac{1}{2}} (\T^2)} \\
& = \inner{\Pi_{-t_k, -x_k} \wt{f}_k , T_{N_k} \gamma}_{H^{\frac{1}{2}} \times H^{\frac{1}{2}}(\T^2)} - \inner{\phi , \gamma}_{\dot{H}^{\frac{1}{2}} \times \dot{H}^{\frac{1}{2}} (\R^2)} \\
& = \inner{\phi^R , \gamma}_{\dot{H}^{\frac{1}{2}} \times \dot{H}^{\frac{1}{2}} (\R^2)} - \inner{\phi , \gamma}_{\dot{H}^{\frac{1}{2}} \times \dot{H}^{\frac{1}{2}} (\R^2)} + o_k(1) = o_k (1) .
\end{align*}
This implies that $h_k = \wt{f}_k- \wt{\phi}_{\OO_k}$ is absent from the frame $\OO$. And by \eqref{eq 5.9} and \eqref{eq 5.10}, we know 
\begin{align}\label{eq 5.13}
\norm{\phi}_{\dot{H}^{\frac{1}{2}}(\R^2)} \gtrsim \delta .
\end{align}
Now by Proposition \ref{prop:equivalenceFrames}, we have
\begin{align}\label{eq 5.12}
\begin{aligned}
& \norm{h_k}_{L^2}^2 =  \norm{\wt{f}_k}_{L^2}^2 + o_k (1) , \\
& \norm{h_k}_{\dot{H}^{\frac{1}{2}}}^2  = \norm{\wt{f}_k}_{\dot{H}^{\frac{1}{2}}}^2 + \norm{\phi}_{\dot{H}^{\frac{1}{2}} (\R^2)}^2 - 2 \inner{\wt{f}_k , \wt{\phi}_{\OO_k}}_{\dot{H}^{\frac{1}{2}} \times \dot{H}^{\frac{1}{2}}} = \norm{\wt{f}_k}_{\dot{H}^{\frac{1}{2}}}^2 + \norm{\phi}_{\dot{H}^{\frac{1}{2}} (\R^2)}^2  + o_k (1) .
\end{aligned}
\end{align}
Finally, we define $f_k^0 =\wt{f}_k$. For $\alpha \geq 0$, when $\Omega (\{ f_k^{\alpha} \}) > \delta$ we proceed as follow: applying what we did above to find a frame $\OO_{\alpha}$ and an associated profile $\wt{\phi}_{\OO_k}^{\alpha}$. Then let 
\begin{align*}
f_k^{\alpha +1} = f_k^{\alpha} - \wt{\phi}_{\OO_k^{\alpha}}^{\alpha} .
\end{align*}
Note that $f^{\alpha +1}$ is absent from $\OO^{\alpha}$ by the construction. By induction and Proposition \ref{prop:equivalenceFrames}, it is also absent from all the frames $\OO^{\beta}$, $\beta \leq \alpha$. Similarly all frames $\OO^{\beta}$, $\beta \leq \alpha$ are pairwise orthogonal. Using \eqref{eq 5.12} inductively, we also obtain that
\begin{align*}
\norm{A f_k}_{L^2}^2 = \norm{Ag}_{L^2}^2 + \sum_{\beta \leq \alpha} \norm{A  \wt{\phi}_{\OO_k^{\beta}}}_{L^2}^2 + \norm{A f_k^{\alpha +1}}_{L^2} + o_k (1)
\end{align*}
for $A =1$ or $\abs{\nabla}^{\frac{1}{2}}$.
By \eqref{eq 5.5}, \eqref{eq 5.13} and the orthogonality property above, we see that this procedure stops after at most $O(\delta^{-2})$ steps. Suppose $J$ is the last index of $\alpha$, we then just define $R_k = f_k - g  - \sum_{1 \leq \alpha \leq J} \wt{\phi}_{\OO_k^{\alpha}}^{\alpha}$.

We finish the proof of Lemma \ref{lem 5.6}.
\end{proof}
\begin{rmk}
$g$ and $\widetilde{\psi}_k^{\alpha}$ for all $\alpha$ are called profiles. In addition, we call $g$ is a scale-one profile, and $\widetilde{\psi}_{k}^{\alpha}$ are called Euclidean profiles.
\end{rmk}

\begin{proof}[Proof of Proposition \ref{prop:ProfileDecomposition}]
We let $\delta_l = 2^{-l}$ and we apply inductively Lemma \ref{lem 5.6} to get the sequence of frames and profiles $\OO^{\alpha}$, $\wt{\phi}_{\OO_k^{\alpha}}^{\alpha}$. 
\end{proof}

\begin{prop}[Almost orthogonality of nonlinear profiles]\label{prop:almostorth}
Define $U_k^{\alpha}$, $U_k^{\beta}$ as the maximal life-span $I_k$  solutions of \eqref{NLS} with  initial data $U_k^{\alpha}(0) = \widetilde{\psi}_{\mathcal{O}_k^{\alpha}}^{\alpha}$, $U_k^{\beta}(0) = \widetilde{\psi}_{\mathcal{O}_k^{\beta}}^{\beta}$, where $\mathcal{O}^{\alpha}$ and $\mathcal{O}^{\beta}$ are orthogonal.  And define $G$ to be the solution of the maximal lifespan $I_0$ of \eqref{NLS} with initial data $G(0) =g$.  And $0\in I_k$ and $\lim_{k\to \infty} \abs{I_k} = 0$. Then 
\begin{align*}
\lim_{k\to \infty} \sup_{t\in I_k}\, \inner{ U_k^{\alpha}(t), U_k^{\beta} (t) }_{\dot{H}^{\frac{1}{2}} \times \dot{H}^{\frac{1}{2}}} = 0, \quad \lim_{k\to \infty} \sup_{t\in I_k\cap I_0} \inner{ U^\alpha_k(t), G(t) }_{\dot{H}^{\frac{1}{2}} \times \dot{H}^{\frac{1}{2}}} = 0.
\end{align*}
\end{prop}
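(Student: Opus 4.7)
The plan is to prove both limits by decomposing each nonlinear profile via Corollary \ref{cor Decomp nonlinear Euclidean profiles} and then exploiting the orthogonality of frames. Fix $\theta>0$ and set $J_\theta = (-T_\theta^{-1}, T_\theta^{-1})$; for $k$ large we have $I_k \subset J_\theta$ since $\abs{I_k}\to 0$. On $J_\theta$ we write
\begin{align*}
U_k^\gamma = \omega_k^{\theta,-\infty,\gamma} + \omega_k^{\theta,+\infty,\gamma} + \omega_k^{\theta,\gamma} + \rho_k^{\theta,\gamma}, \quad \gamma\in\{\alpha,\beta\},
\end{align*}
where the pieces obey the bounds in \eqref{eq cor4.8}. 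Since $\norm{\rho_k^{\theta,\gamma}}_{X^{\frac{1}{2}}(J_\theta)}\le\theta$ while the other pieces are uniformly bounded in $X^{\frac{1}{2}}(J_\theta)$, the embedding $X^{\frac{1}{2}}\hookrightarrow L_t^\infty H_x^{\frac{1}{2}}$ combined with Cauchy--Schwarz shows that every cross term in $\inner{U_k^\alpha(t),U_k^\beta(t)}_{\dot H^{\frac{1}{2}}}$ that contains a $\rho$-factor is $O(\theta)$ uniformly in $t\in J_\theta$ and $k$. Sending $\theta\to 0$ at the end, it suffices to show that each of the nine main cross terms vanishes as $k\to\infty$ uniformly in $t\in I_k$.

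The four cross terms where both factors are of type $\omega_k^{\theta,\pm\infty,\cdot}$ are handled by Proposition \ref{prop:equivalenceFrames}(2). Indeed, unpacking the construction in Corollary \ref{cor Decomp nonlinear Euclidean profiles}, at time $t$ these factors equal $\Pi_{t_k^\gamma-t,\,x_k^\gamma} T_{N_k^\gamma}\phi^{\theta,\pm\infty,\gamma}$, i.e., the profile associated to the shifted frame $(N_k^\gamma, t_k^\gamma-t_k, x_k^\gamma)_k$ for any $t_k\in I_k$ achieving (near) the supremum; since $t_k\to 0$ this is still a Euclidean frame, and the shift preserves orthogonality with the analogous shift of $\mathcal{O}^\beta$, so \eqref{eq:almostorth1} gives $o_k(1)$. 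The remaining five cross terms involve at least one concentrated factor $\omega_k^{\theta,\gamma}$, which by \eqref{eq cor4.8} is supported in
\begin{align*}
S_k^{\theta,\gamma} = \bracket{(t,x)\in J_\theta\times\T^2:\abs{t-t_k^\gamma}\le T_\theta (N_k^\gamma)^{-2},\ \abs{x-x_k^\gamma}\le R_\theta (N_k^\gamma)^{-1}}
\end{align*}
with $\abs{\nabla_x^m\omega_k^{\theta,\gamma}}\le R_\theta (N_k^\gamma)^{\abs{m}+\frac{1}{2}}\mathbf{1}_{S_k^{\theta,\gamma}}$. The frame orthogonality relation
\begin{align*}
\ln\abs{N_k^\alpha/N_k^\beta} + (N_k^\alpha)^2\abs{t_k^\alpha-t_k^\beta} + N_k^\alpha\abs{x_k^\alpha-x_k^\beta}\to\infty
\end{align*}
forces, along a subsequence, one of three decoupling mechanisms: scale separation, in which Bernstein's inequality against the frequency-localized companion factor yields a gain $(\min N_k/\max N_k)^c\to 0$; spatial separation, in which the cubes $S_k^{\theta,\alpha},S_k^{\theta,\beta}$ are eventually disjoint and the companion factor, after $(-\Delta)^{\frac{1}{4}}$, has $L^2$-mass on the far support that vanishes by Schwartz-type kernel estimates for the frequency projections; or temporal separation, in which the time slabs of $S_k^{\theta,\alpha}$ and $S_k^{\theta,\beta}$ are eventually disjoint, so for every fixed $t$ at most one $\omega_k^{\theta,\gamma}(t)$ is nonzero and only mixed terms against the smooth $\omega_k^{\theta,\pm\infty,\cdot}$ remain, which are also $o_k(1)$ by the localization of the latter.

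For the pairing with $G$, we apply the same decomposition to $U_k^\alpha$. Since $N_k^\alpha\to\infty$ and $G\in C(I_0:H^{\frac{1}{2}}(\T^2))$ has a compact trajectory, the scattered pieces $\omega_k^{\theta,\pm\infty,\alpha}(t)$ live at spatial scale $(N_k^\alpha)^{-1}\to 0$ and converge weakly to $0$ in $\dot H^{\frac{1}{2}}(\T^2)$ uniformly in $t\in I_k\cap I_0$, hence pair with $G(t)$ to $o_k(1)$. The concentrated part $\omega_k^{\theta,\alpha}(t)$ has spatial support of measure $\lesssim R_\theta^2(N_k^\alpha)^{-2}\to 0$, so Cauchy--Schwarz against the equi-integrable $L^2$ density $(-\Delta)^{\frac{1}{4}}G(t)$ also gives $o_k(1)$. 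The main obstacle in executing this plan is the mixed cross-term analysis in the first limit: the nonlocality of $(-\Delta)^{\frac{1}{4}}$ means that carrying the sharp spatial support of $\omega_k^{\theta,\gamma}$ through the $\dot H^{\frac{1}{2}}$-inner product requires quantitative kernel estimates paired with the pointwise derivative bounds in \eqref{eq cor4.8}, and the three decoupling cases must be combined with an appropriate extraction of a single driving subsequence.
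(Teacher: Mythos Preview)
Your strategy coincides with the paper's: decompose each $U_k^\gamma$ via Corollary \ref{cor Decomp nonlinear Euclidean profiles}, discard the $\rho$-pieces by their $X^{\frac12}$-smallness and the embedding into $L_t^\infty H_x^{\frac12}$, treat the $(\omega^{\pm\infty},\omega^{\pm\infty})$ pairings by Proposition \ref{prop:equivalenceFrames}, and handle the remaining cross terms through frame orthogonality. The one structural difference is the second limit: rather than your separate compactness/weak-convergence argument, the paper assigns $G$ the trivial frame $(1,0,0)_k$ with the degenerate decomposition $\omega_k^{0,\theta,\pm\infty}:=\tfrac12 G$, $\omega_k^{0,\theta}=\rho_k^{0,\theta}:=0$ (and observes $\norm{G}_{Z'(-T_\theta^{-1},T_\theta^{-1})}\le\theta$ for $T_\theta$ large), so that the $G$ pairing becomes literally a special case of the first limit and no equi-integrability step is needed.

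On the obstacle you flag, the paper never invokes kernel estimates for $(-\Delta)^{1/4}$. For $\langle\omega_k^{\alpha,\theta},\omega_k^{\beta,\theta}\rangle_{\dot H^{1/2}}$ with disparate scales (say $N_k^\alpha/N_k^\beta\ge\tfrac12\varepsilon^{-1000}$) it inserts the projections $P_{\le\varepsilon^{10}N_k^\alpha}$ and $P_{>\varepsilon^{-10}N_k^\beta}$ and uses that $P_{\le\varepsilon^{10}N_k^\alpha}\omega_k^{\alpha,\theta}$ and $P_{>\varepsilon^{-10}N_k^\beta}\omega_k^{\beta,\theta}$ are $O(\varepsilon)$ in $X^{\frac12}$; these are exactly the bounds proved in Claim \ref{claim1} inside Lemma \ref{lem:7.2}, coming directly from the pointwise derivative control in \eqref{eq cor4.8}. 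The mixed $(\omega^{\pm\infty},\omega^\theta)$ terms are dispatched by referring to Case 4 in the proof of \eqref{eq:6.9}, which is again a scale-comparison and frequency-projection computation. When scales are comparable the paper simply records that orthogonality forces $S_k^{\alpha,\theta}\cap S_k^{\beta,\theta}=\varnothing$ for $k$ large and hence $\omega_k^{\alpha,\theta}\omega_k^{\beta,\theta}\equiv 0$; the nonlocality in the purely-spatial-separation subcase that you single out is not spelled out there either, but it is controlled by the same frequency-projection device since both pieces are essentially localized to $\abs{\xi}\lesssim R_\theta N_k$ by \eqref{eq cor4.8}.
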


\begin{proof}[Proof of Proposition  \ref{prop:almostorth}]
Set $U^0_k(0) = g$ and $U^0_k = G$ for all $k$, such that $U^0_k$ can be considered as a nonlinear profile with a trivial frame $\mathcal{O} =(1, 0, 0)_k$. 

For any $\theta > 0$, by the decomposition of the nonlinear profiles $U^{\alpha}$ and $U^{\beta}$ (Corollary \ref{cor Decomp nonlinear Euclidean profiles}), there exist $T_{\theta, \alpha}$, $R_{\theta, \alpha}$, $T_{\theta, \beta}$, $R_{\theta, \beta}$ sufficiently large
\begin{align*} 
U^{\alpha}_k &= \w_k^{\alpha, \theta,-\infty}+\w_k^{\alpha, \theta,+\infty}   +\w_k^{\alpha, \theta}+\rho_k^{\alpha, \theta},\\
U^{\beta}_k &= \w_k^{\beta, \theta,-\infty}+\w_k^{\beta, \theta,+\infty}  +\w_k^{\beta, \theta}+\rho_k^{\beta, \theta}.
\end{align*}
For $U_k^0$,   set $U_k^0 := \w_k^{\alpha,\theta,-\infty}+\w_k^{\alpha,\theta,+\infty}  +\w_k^{\alpha,\theta}+\rho_k^{\alpha, \theta}$ where $\rho_{k}^{0, \theta} =  \w_k^{0,\theta} = 0$ and $\w^{0,\theta,+\infty}=\w^{0,\theta,-\infty}=\frac{1}{2}G$. And by taking $T_{\theta,0}$ large, it is easy to make $\norm{G}_{Z'((-T_{\theta,0},T_{\theta,0}))}\leq \theta$. So   $\inner{ U_k^{\alpha} (t), G(t) }_{\dot{H}^{\frac{1}{2}} \times \dot{H}^{\frac{1}{2}}}$  can be considered as a special case of $\inner{ U_k^{\alpha} (t), U_k^{\beta}(t) }_{\dot{H}^{\frac{1}{2}}\times \dot{H}^{\frac{1}{2}}}$ when $\beta = 0$.

Since $\rho_k^{\alpha,\theta}$, $\rho_k^{\beta,\theta}$ are the small term with the $X^{\frac{1}{2}}$-norm  less then $\theta$, for any fixed $t\in I_k$, it will suffice to consider the following three terms:
\begin{enumerate}[\bf (1)]
\item $\inner{ \w_k^{\alpha, \theta,\pm\infty} , \w_k^{\beta, \theta,\pm\infty} }_{\dot{H}^{\frac{1}{2}} \times \dot{H}^{\frac{1}{2}}}$;
\item $\inner{\w_k^{\alpha, \theta,\pm\infty} , \w_k^{\beta, \theta} }_{\dot{H}^{\frac{1}{2}}\times \dot{H}^{\frac{1}{2}}}$;
\item $\inner{\w_k^{\alpha, \theta} , \w_k^{\beta, \theta} }_{\dot{H}^{\frac{1}{2}}\times \dot{H}^{\frac{1}{2}}}$.
\end{enumerate}
\case{1}{$\inner{ \w_k^{\alpha, \theta,\pm\infty} , \w_k^{\beta, \theta,\pm\infty} }_{\dot{H}^{\frac{1}{2}} \times \dot{H}^{\frac{1}{2}}}$.}
By the constructions of $\w_k^{\alpha, \theta,\pm\infty}, \w_k^{\beta, \theta,\pm\infty}$ in the proof of Corollary \ref{cor Decomp nonlinear Euclidean profiles}, we obtain that 
\begin{align*}
\w_k^{\alpha, \theta,\pm\infty} := \mathds{1}_{\{  \pm(t-t_k^{\alpha}) \geq T_{\alpha, \theta}(N^{\alpha}_k)^{-2}, \abs{t} \leq T_{\alpha, \theta}^{-1}\}}  \parenthese{\Pi_{t_k^{\alpha}-t, x_k^{\alpha}} T_{N_k^{\alpha}}\phi^{\alpha, \theta,\pm\infty}},\\
\w_k^{\beta, \theta,\pm\infty} := \mathds{1}_{\{ \pm(t-t_k^{\beta}) \geq T_{\beta, \theta}(N^{\beta}_k)^{-2}, \abs{t} \leq T_{\beta, \theta}^{-1}\}} \parenthese{\Pi_{t_k^\beta-t, x_k^{\beta}} T_{N_k^{\beta}}\phi^{\beta, \theta,\pm\infty} }.
\end{align*}
For any fixed $t\in I_k$, we obtain that 
\begin{align*}
\inner{ \w_k^{\alpha, \theta,\pm\infty}(t) , \w_k^{\beta, \theta,\pm\infty}(t) }_{\dot{H}^\frac{1}{2}\times \dot{H}^{\frac{1}{2}}} = \inner{ \phi^{\alpha, \theta,\pm\infty}_{\mathcal{O}_k^{\alpha}}, \phi^{\beta, \theta,\pm\infty}_{\mathcal{O}_k^{\beta}} }_{\dot{H}^{\frac{1}{2}} \times \dot{H}^{\frac{1}{2}}}.
\end{align*}
By \eqref{eq:almostorth1} of Proposition \ref{prop:equivalenceFrames}, we obtain that 
\begin{align*}
\lim_{k\to\infty} \sup_t \inner{\w_k^{\alpha, \theta,\pm\infty}(t) , \w_k^{\beta, \theta,\pm\infty}(t) }_{\dot{H}^{\frac{1}{2}} \times \dot{H}^{\frac{1}{2}}} = 0.
\end{align*}

\case{2}{$\inner{ \w_k^{\alpha, \theta,\pm\infty} , \w_k^{\beta, \theta} }_{\dot{H}^{\frac{1}{2}} \times \dot{H}^{\frac{1}{2}}}$.} 
By the constructions of $\w_k^{\alpha, \theta,\pm\infty}, \w_k^{\beta, \theta,\pm\infty}$ in the proof of Corollary \ref{cor Decomp nonlinear Euclidean profiles}, we obtain that 
\begin{align*}
\w_k^{\beta, \theta} := \widetilde{u}_k^{\beta} \cdot \mathds{1}_{S_k^{ \beta, \theta}},
\end{align*}
where
\begin{align*}
S_k^{\beta, \theta} :=\{ (t, x)\in  (-T_{\beta,\theta}, T_{\beta, \theta}) \times \T^2   : |t-t_k^{\beta}| < T_{\beta, \theta}(N_k^{\beta})^{-2},\ |x-x_k^{\beta}|\leq R_{\beta, \theta}(N_k^{\beta})^{-1}\}
\end{align*}
and $\widetilde{u}_k^{\beta}$ is defined in \eqref{eq:tildeu}.
Following a similar proof of the \textbf{Case 4} in the proof of \eqref{eq:6.9} in Subsection \ref{ssec A2}, we have that $\lim_{k\to \infty}\sup_t\langle\w_k^{\alpha, \theta,\pm\infty} , \w_k^{\beta, \theta} \rangle_{\dot{H}^\frac{1}{2}\times \dot{H}^\frac{1}{2}} = 0$.
    
\case{3}{$\inner{\w_k^{\alpha, \theta} , \w_k^{\beta, \theta} }_{\dot{H}^{\frac{1}{2}} \times \dot{H}^{\frac{1}{2}}}$.} 
For $\ee > 0$ small.

If $N_{k}^{\alpha}/ N_{k}^{\beta}+ N_{k}^{\beta} / N_{k}^{\alpha} \leq \ee^{-1000}$ and $k$ is large enough then $S_{k}^{\alpha, \theta} \cap S_{k}^{\beta, \theta}= \emptyset$. (By the definition of orthogonality of frames,  $N_{k}^{\alpha} /N_{k}^{\beta} + N_{k}^{\beta}/N_{k}^{\alpha} \leq \ee^{-1000}$ implies $(N_{k}^{\alpha})^2 |t_{k}^{\alpha}-t_{k}^{\beta}| \to \infty$ or $N_{k}^{\alpha} |x_{k}^{\alpha}-x_{k}^{\beta}|\to \infty$, so $S_{k}^{\alpha, \theta} \cap S_{k}^{\beta, \theta} = \emptyset$.)
In this case, $\w_{k}^{\alpha,\theta} \w_{k}^{\beta,\theta} \equiv 0$.

If $N_{k}^{\alpha}/N_{k} ^{\beta}\geq \ee^{-1000}/2$.
Denote that
\begin{align*}
\w_{k}^{\alpha,\theta} \w_{k}^{\beta,\theta}= \w_{k}^{\alpha,\theta} \widetilde{ \w}_{k}^{\beta,\theta}:=
\w_{k}^{\alpha,\theta} \cdot (\w_{k}^{\beta,\theta} \mathds{1}_{(t_{k}^{\alpha}-T_{\alpha,\theta} (N_{k}^{\alpha})^{-2}, t_{k}^{\alpha}+T_{\alpha,\theta} (N_{k}^{\alpha})^{-2})}(t)).
\end{align*}
By $\ee^{10}N_k^{\alpha}>>\ee^{-10}N_k^{\beta}$ and the \textit\textbf{Claim $\dagger$} in the proof of Lemma \ref{lem:7.2}, we obtain that
\begin{align*}
\inner{ \w_{k}^{\alpha,\theta}, \w_{k}^{\beta,\theta}}_{\dot{H}^{\frac{1}{2}} \times\dot{H}^{\frac{1}{2}}} & \leq \inner{ P_{\leq \ee^{10}N_k^{\alpha}}\w^{\alpha,\theta}_{k},  \w_{k}^{\beta,\theta}}_{\dot{H}^{\frac{1}{2}} \times \dot{H}^{\frac{1}{2}}}
  +\inner{ P_{> \ee^{10} N_k^{\alpha}} \w_{k}^{\alpha,\theta}, P_{>\ee^{-10} N_k^{\beta}} \w^{\beta,\theta}_{k} }_{\dot{H}^{\frac{1}{2}} \times \dot{H}^{\frac{1}{2}}}\\
&+\inner{ P_{> \ee^{10}N_k^{\alpha}} \w_{k}^{\alpha,\theta}, \w_{k}^{\beta,\theta}}_{\dot{H}^{\frac{1}{2}} \times \dot{H}^{\frac{1}{2}}} \lesssim \ee.
\end{align*}
Now we finish the proof of Proposition \ref{prop:almostorth}.
\end{proof}

\section{Proof of Theorem \ref{thm Main}}\label{sec Rigidity}
It suffice to prove the solutions remain bounded in $Z$-norm on intervals of
length at most 1. To obtain this, we run the induction on  $\norm{u}_{L_t^{\infty} H_x^{\frac{1}{2}}}$.

\begin{defn}\label{defn Lambda}
Define
\begin{align*}
\Lambda (L,\tau) =  \sup \bracket{ \norm{u}_{Z(I)}:  u: I \times \T^2 \to \C \text{ solves \eqref{NLS} with } \sup_{t\in I} \norm{u(t)}_{H^{\frac{1}{2}}(\T^2)}^2<L, \abs{I}\leq \tau} 
\end{align*}
where $u$ is any strong solution of \eqref{NLS} with initial data $u_0$ in interval $I$ of length $\abs{I} \leq \tau$.
\end{defn}
We observe that
\begin{itemize}
\item
$\Lambda$ is an increasing function of both $L$ and $\tau$;
\item
by the definition we have the sublinearity of $\Lambda$ in $\tau$: $\Lambda (L, \tau + \sigma) \leq \Lambda (L, \tau) + \Lambda (L, \sigma)$.
\end{itemize}
Hence we define
\begin{align*}
\Lambda_0 (L) = \lim_{\tau\to 0} \Lambda (L,\tau),
\end{align*}
then for all $\tau$, we have that  $\Lambda(L, \tau)<+\infty $ is equivalent to $ \Lambda_0 (L)<+\infty$.
Finally, we define
\begin{align}\label{eq Emax}
E_{max} = \sup\{L: \Lambda_0 (L)<+\infty\}.
\end{align}

\begin{thm}\label{thm:main2}
Consider $E_{max}$ defined as in \eqref{eq Emax},  $E_{max} = +\infty$.
\end{thm}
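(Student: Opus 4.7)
The plan is to argue by contradiction, following the Kenig-Merle concentration-compactness/rigidity scheme adapted to the torus via the `black-box' strategy of \cite{IP1,IP2,IPS}. Suppose $E_{max} < \infty$. The small-data theory in Proposition \ref{prop LWP} guarantees $E_{max} > 0$, and by the definition of $E_{max}$ and $\Lambda_0$ together with the sublinearity of $\Lambda$ in $\tau$, one can extract a sequence of strong solutions $u_k : I_k \times \T^2 \to \C$ of \eqref{NLS} with $\abs{I_k} \to 0$, $\sup_{t \in I_k} \norm{u_k(t)}_{H^{\frac{1}{2}}}^2 \to E_{max}$, and $\norm{u_k}_{Z(I_k)} \to \infty$. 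After a time translation we may assume $0 \in I_k$.

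Apply the linear profile decomposition of Proposition \ref{prop:ProfileDecomposition} to $\{u_k(0)\}$ to obtain
\begin{align*}
u_k(0) = g + \sum_{1 \leq \alpha \leq J} \widetilde{\psi}^\alpha_{\mathcal{O}_k^\alpha} + R_k^J,
\end{align*}
with pairwise orthogonal Euclidean frames, $L^2$- and $\dot{H}^{\frac{1}{2}}$-Pythagorean orthogonality, and $\limsup_{J}\limsup_{k} \norm{e^{it\Delta} R_k^J}_{Z(I_k)} = 0$. The central step is to prove that if two or more profiles are nontrivial then a uniform $Z$-bound on $u_k$ follows, contradicting $\norm{u_k}_{Z(I_k)} \to \infty$. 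The Pythagorean orthogonality forces each surviving profile (including $g$) to be strictly below $E_{max}$, hence, by definition of $E_{max}$, the nonlinear scale-one profile $G$ (solution of \eqref{NLS} with data $g$) and the nonlinear Euclidean profiles $U_k^\alpha$ constructed in Proposition \ref{prop Euclidean profiles} each satisfy $\norm{\cdot}_{X^{\frac{1}{2}}(I_k)} \lesssim 1$ for $k$ large. Form the approximation
\begin{align*}
\widetilde{u}_k^J = G + \sum_{\alpha \leq J} U_k^\alpha + e^{it\Delta} R_k^J,
\end{align*}
and use Corollary \ref{cor Decomp nonlinear Euclidean profiles}, Proposition \ref{prop:almostorth}, together with the trilinear estimates in Lemma \ref{lem Trilinear} and the nonlinear estimate Lemma \ref{lem Nonlinear est}, to show the error $(i\partial_t + \Delta) \widetilde{u}_k^J - \abs{\widetilde{u}_k^J}^4 \widetilde{u}_k^J$ is small in $N(I_k)$ when $J$ and $k$ are large. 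The stability Proposition \ref{prop Stability} then upgrades $\widetilde{u}_k^J$ to $u_k$ and yields $\limsup_k \norm{u_k}_{Z(I_k)} \leq C(E_{max}) < \infty$, a contradiction.

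Consequently at most one profile survives. If it is the scale-one profile $g$, then $u_k(0) \to g$ in $H^{\frac{1}{2}}(\T^2)$; letting $G$ be the solution with data $g$ on a small interval around $0$, Remark \ref{rmk Z<X} gives $\norm{G}_{Z(J)} < \infty$ on any compact $J$, and since $\abs{I_k} \to 0$ one concludes $\norm{G}_{Z(I_k)} \to 0$ and hence $\norm{u_k}_{Z(I_k)} \to 0$ by Proposition \ref{prop Stability}, a contradiction. Therefore the unique surviving profile must be a Euclidean profile $\psi \in \dot{H}^{\frac{1}{2}}(\R^2)$ with frame $\mathcal{O} = (N_k, t_k, x_k)$. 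The corresponding nonlinear profile $U_k$ then approximates $u_k$ via Proposition \ref{prop Stability}, and combining Proposition \ref{prop Euclidean profiles}, Theorem \ref{thm Yu} (scattering of the $\dot{H}^{\frac{1}{2}}$-critical problem on $\R^2$), and the extinction Lemma \ref{lem Extinction} yields $\norm{U_k}_{Z(I_k)} \lesssim_{E_{max}} 1$ uniformly in $k$, the final contradiction.

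The main obstacle is the multi-profile step: the quintic cross-terms between distinct nonlinear profiles, and between profiles and $G$, must be shown to vanish in $N(I_k)$ as $k \to \infty$. This requires combining the frame orthogonality encoded by Proposition \ref{prop:almostorth} and the frequency-localized decomposition of nonlinear Euclidean profiles from Corollary \ref{cor Decomp nonlinear Euclidean profiles} with the duality formulation of the $N$-norm, so that products of the form $U_k^\alpha \cdot U_k^\beta$ can be absorbed; care is needed to handle the residue $e^{it\Delta} R_k^J$, whose $Z$-norm is small but whose $X^{\frac{1}{2}}$-norm is only bounded by $E_{max}$, which is precisely why the $Z$-based nonlinear estimate in Lemma \ref{lem Nonlinear est} was designed.
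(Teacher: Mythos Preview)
Your overall strategy matches the paper's: contradiction, profile decomposition, an approximate nonlinear solution built from nonlinear profiles plus the free remainder, and stability. The multi-profile step you outline is the heart of the argument and aligns with the paper's Case~3 and the appendix Lemma~\ref{lem:6.2}.

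There is, however, a genuine gap in your handling of the ``only the scale-one profile survives'' case. You assert that then $u_k(0) \to g$ in $H^{\frac{1}{2}}(\T^2)$, but this is not justified: ruling out the Euclidean profiles only tells you that $u_k(0) = g + R_k$ with $\limsup_k \norm{e^{it\Delta} R_k}_{Z(I_k)} = 0$; the Pythagorean identity in Proposition~\ref{prop:ProfileDecomposition} does \emph{not} force $\norm{R_k}_{H^{\frac{1}{2}}} \to 0$, since the budget $E_{max}$ controls $\sup_{t \in I_k} \norm{u_k(t)}^2$, not $\norm{u_k(0)}^2$ alone. Consequently you cannot apply Proposition~\ref{prop Stability} with the approximate solution $G$, because $\norm{u_k(0) - G(0)}_{H^{\frac{1}{2}}} = \norm{R_k}_{H^{\frac{1}{2}}}$ need not be small.

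The paper's fix is simpler and does not involve $G$ at all: when there are no Euclidean profiles (regardless of whether $g=0$), one uses that $\abs{I_k} \to 0$ implies $\norm{e^{it\Delta} g}_{Z(I_k)} \to 0$ for the fixed $g \in H^{\frac{1}{2}}$, combines this with $\norm{e^{it\Delta} R_k}_{Z(I_k)} \to 0$ to get $\norm{e^{it\Delta} u_k(0)}_{Z(I_k)} \leq \delta_0$ for large $k$, and then applies Proposition~\ref{prop LWP} directly (not stability) to obtain $\norm{u_k}_{Z(I_k)} \lesssim \delta_0$. A similar caution applies to your single-Euclidean-profile case: to approximate $u_k$ by $U_k$ via stability you need $\norm{R_k}_{H^{\frac{1}{2}}} \to 0$, which holds only when that profile carries the full budget $\mathcal{L}(1) = E_{max}$ (this is exactly the hypothesis the paper adds in its Case~2); the remaining situation $\mathcal{L}(1) < E_{max}$ must be absorbed into the approximation argument, which in fact works whenever every profile lies strictly below $E_{max}$, not only when there are two or more.
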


Notice that Theorem \ref{thm:main2} implies Theorem \ref{thm Main}. Hence in the rest of this paper, we will address the proof of Theorem \ref{thm:main2}. To prove this theorem, we argue by contradiction and assume that $E_{max}<+\infty$.  The main strategy is that, by using the profile decomposition, we analyze the following three scenarios respectively,
\begin{enumerate}[\bf (1)]
\item
no Euclidean profiles
\item
exactly one Euclidean profile, but no scale-one profile
\item
multiple (Euclidean/scale-one) profiles
\end{enumerate}
then rule out the possibility of all these cases to obtain the contradiction to the existence of such $E_{max}$. Notice that the first two cases are comparably easier. Now we prove Theorem \ref{thm:main2} via profile decomposition.

\begin{proof}[Proof of Theorem \ref{thm:main2}]
Assume that 
\begin{align}\label{eq Eass}
E_{max}<+\infty. 
\end{align}
By the definition of $E_{max}$, there exists a sequence of solutions $u_k $ with their lifespan $I_k = [-T_ k,T^k]$ such that
\begin{align}\label{6.2}
\begin{aligned}
& \sup_{t\in [-T_k, T^k]}\norm{u_k(t)}_{H^{\frac{1}{2}}(\T^2)}   \to E_{max},   \\
& \norm{u_k}_{Z(-T_k, 0)} \to +\infty ,\\
&  \norm{u_k}_{Z(0,T^k)} \to +\infty .
\end{aligned}
\end{align}
for some $T_k,\ T^k\to 0$ as $k\to +\infty$. For  simplicity of notations, we denote 
\begin{align*}
L(\phi) :=   \sup_{t\in I_k} \norm{u_{\phi}(t)}_{H^{\frac{1}{2}}(\T^2)}^2,
\end{align*}
where $u_{\phi}(t)$ is the solution of \eqref{NLS} with initial data $u_{\phi}(0) =\phi$. By the Proposition \ref{prop:ProfileDecomposition}, after extracting a subsequence, \eqref{6.2} gives a sequence of profiles $\widetilde{\psi}_k^{\alpha}$, where $\alpha, k = 1, 2,\cdots$ and a decomposition
\begin{align*}
u_k(0) = g + \sum_{1 \leq \alpha \leq J}  \widetilde{\psi}_k^{\alpha} + R^J_k ,
\end{align*}
satisfying
\begin{align}\label{63}
\limsup_{J\to\infty} \limsup_{k\to\infty} \norm{e^{it\DD}R_k^J}_{Z(I_k)} =0.
\end{align}
Recall that $g$ is the so-called scale-one profile and $\widetilde{\psi}_k^{\alpha} $'s are Euclidean profiles.
Moreover the almost orthogonality in  Proposition \ref{prop:ProfileDecomposition} and the almost orthogonality of nonlinear profiles in Lemma \ref{prop:almostorth}, we obtain that 
\begin{align}\label{eq:EnergyDecoupling}
\begin{aligned}
& \mathcal{L}(\alpha) := \lim_{k\to+\infty} L(\widetilde{\psi}_{k}^{\alpha})  \in [0,E_{max}],\\
& \lim_{J\to J^*}  \parenthese{  \sum_{1\leq \alpha \leq J} \mathcal{L}(\alpha) +\lim_{k\to\infty} L(R_k^J) }+ L(g)= E_{max},
\end{aligned}
\end{align}

\case{1}{There are no Euclidean profiles.} 
In fact, this case contains two scenarios: (1) there are no scale-one profiles neither ($g=0$), (2) there is a nontrivial scale-one profile ($g\neq 0$). Our argument works for both scenarios. 

Notice that there is no any Euclidean profiles, and  $\norm{g}_{H_x^{\frac{1}{2}}(\T^2)} \lesssim L(g) \leq E_{max}$. Then, by $I_k \to 0$ as $k \to \infty$, there exists $\eta >0$,  such that for $k$ large enough
\begin{align*}
\norm{e^{it\Delta} u_k(0)}_{Z(I_k)} \leq \norm{e^{it\Delta} g}_{Z((-\eta, \eta))} + \ee \leq \delta_0,
\end{align*}
where $\delta_0$ is given by the local theory in Proposition \ref{prop LWP}. In this case, we conclude that $\norm{u_k}_{Z(I_k)} \lesssim 2 \delta$ which contradicts \eqref{6.2}.

\case{2}{There is no scale-one profile, but exactly one Euclidean profile.} That is, $g=0$ and only one Euclidean profile $\widetilde{\psi}_k^1$ such that $\mathcal{L}(1) = E_{max}$.
By Proposition \ref{prop:ProfileDecomposition} and \eqref{eq:EnergyDecoupling}, we obtain that
\begin{align*}
L(\widetilde{\psi}_k^1) \leq E_{max}
\end{align*}
which implies
\begin{align*}
\sup_{t}\norm{u_{\psi}}_{\dot{H}^{\frac{1}{2}}(\R^2)}<\infty .
\end{align*}
Let $U_k^1$ be  the solution of \eqref{NLS} with $U_k^1(0)= \widetilde{\psi}_k^1$. In this case, we use the {\bf Part (1)} of Proposition \ref{prop Euclidean profiles}, given some $\varepsilon>0$, for $k$ large enough, we have that
\begin{align}\label{eq:case2}
\begin{aligned}
& \norm{U^1_k}_{X^{\frac{1}{2}}(I_k)} \leq \norm{U^1_k}_{X^{\frac{1}{2}}((-\dd, \dd))} \lesssim 1, \\
& \norm{U_k^1(0) - u_k(0)}_{H_x^{\frac{1}{2}}(\T^2)} \leq \epsilon .
\end{aligned}
\end{align}
By \eqref{eq:case2} and Proposition \ref{prop Stability}, we obtain that
\begin{align}
\norm{u_k}_{Z(I_k)} \lesssim \norm{u_k}_{X^{\frac{1}{2}}(I_k)} \lesssim 1,
\end{align}
which contradicts \eqref{6.2}.

\case{3}{There are at least two nonzero profiles in the decomposition.}
By \eqref{eq:EnergyDecoupling}, we know that $L(g) <E_{max}$ and $\mathcal{L}(\alpha) <E_{max}$ for all $\alpha$. By almost orthogonality and relabeling the profiles, we can assume that for all $\alpha$,
\begin{align*}
\begin{aligned}
& \mathcal{L}(\alpha) \leq \mathcal{L}(1) < E_{max} -\eta, \\
& L(g)<E_{max} -\eta, 
\end{aligned}
\end{align*}
for  some $\eta>0$.
Define $U_k^{\alpha}$ as the maximal life-span solution of \eqref{NLS} with  initial data $U_k^{\alpha}(0) = \widetilde{\psi}_k^{\alpha}$  and $G$ to be the maximal life-span solution  of \eqref{NLS} with initial  data $G(0) =g$.

By Definition \ref{defn Lambda} and the assumption on $E_{max}$ in \eqref{eq Eass}, we have
\begin{align*}
\norm{G}_{Z((-1,1))} + \lim_{k\to\infty} \norm{U_k^{\alpha}}_{Z((-1,1))} \leq  2 \Lambda(E_{max}-\eta/2, 2) \lesssim 1.
\end{align*}
By Proposition \ref{prop LWP}, it follows that for any $\alpha$ and any $k>k_0(\alpha)$ sufficient large,
\begin{align*}
\norm{G}_{X^{\frac{1}{2}}((-1,1))} +\norm{U_k^{\alpha}}_{X^{\frac{1}{2}}((-1, 1))}\lesssim 1.
\end{align*}
For $J, k \geq 1$, we define
\begin{align*}
U_{prof, k}^J:= G + \sum_{\alpha =1}^J U_k^{\alpha} = \sum_{\alpha = 0}^J U_k^\alpha.
\end{align*}
where we denote $U_k^0 := G$.

Now we claim the following.
\begin{claim}\label{claim2}
There is a constant $Q$ such that
\begin{align*}
\norm{U^J_{prof, k}}_{X^{\frac{1}{2}}((-1,1))}^2 +\sum_{\alpha=0}^J \norm{U_k^{\alpha}}_{X^{\frac{1}{2}}((-1,1))}^2   \leq Q^2,
\end{align*}
uniformly on $J$.
\end{claim}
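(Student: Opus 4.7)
The plan is to partition the profiles by size, treat each group separately, and then assemble via almost orthogonality. Fix a small threshold $\epsilon_0 = \epsilon_0(E_{max}) > 0$ chosen so that Proposition \ref{prop LWP} Part (1) applies to any initial datum $\phi$ with $\norm{\phi}_{H^{\frac{1}{2}}(\T^2)} \leq \epsilon_0^{\frac{1}{2}}$ (using Remark \ref{rmk Z<X} to control the $Z'$-norm of the linear flow by the $H^{\frac{1}{2}}$-norm of the data). By the energy decoupling \eqref{eq:EnergyDecoupling}, only finitely many indices satisfy $\mathcal{L}(\alpha) \geq \epsilon_0$; relabel so that these ``large'' profiles are $\alpha = 0, 1, \ldots, A$, with $\alpha = 0$ corresponding to $G$, and the remaining $\alpha > A$ are ``small''. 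Here $A \leq \lceil E_{max}/\epsilon_0 \rceil$.

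For each of the finitely many large profiles, the bound $\mathcal{L}(\alpha) < E_{max} - \eta$ together with the induction hypothesis ($\Lambda_0(E_{max} - \eta/2) < \infty$ by the definition of $E_{max}$) gives $\norm{U_k^\alpha}_{Z((-1,1))} \lesssim 1$ for $k$ large, and then Proposition \ref{prop LWP} Part (2) upgrades this to $\norm{U_k^\alpha}_{X^{\frac{1}{2}}((-1,1))} \leq C(E_{max}, \eta)$. Summing these $A+1$ squared norms contributes a constant $C_1 = C_1(E_{max}, \eta)$. For the small profiles, the small-data local theory yields
\begin{align*}
\norm{U_k^\alpha}_{X^{\frac{1}{2}}((-1,1))} \leq 2 \norm{\wt{\psi}_{\OO_k^\alpha}^\alpha}_{H^{\frac{1}{2}}(\T^2)}, \quad \alpha > A,
\end{align*}
so by the $\dot H^{\frac{1}{2}}$ orthogonality of Proposition \ref{prop:ProfileDecomposition} (together with $\norm{\wt{\psi}_{\OO_k^\alpha}^\alpha}_{L^2(\T^2)}^2 \lesssim (N_k^\alpha)^{-1} \to 0$ for Euclidean profiles),
\begin{align*}
\sum_{\alpha > A} \norm{U_k^\alpha}_{X^{\frac{1}{2}}((-1,1))}^2 \leq 4 \sum_{\alpha > A} \norm{\wt{\psi}_{\OO_k^\alpha}^\alpha}_{H^{\frac{1}{2}}(\T^2)}^2 \leq 4 E_{max} + o_k(1).
\end{align*}
This controls $\sum_{\alpha=0}^J \norm{U_k^\alpha}_{X^{\frac{1}{2}}}^2 \leq C_1 + 4E_{max} + o_k(1)$ uniformly in $J$.

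To bound the norm of $U^J_{prof,k}$ itself, expand $\norm{U^J_{prof,k}}_{X^{\frac{1}{2}}}^2 = \sum_N \langle N\rangle \norm{P_N U^J_{prof,k}}_{U^2}^2$ and control the cross terms produced by pairs $(\alpha, \beta)$ with $\alpha \neq \beta$. The orthogonality of the Euclidean frames implies that in the limit $k \to \infty$ the Euclidean profiles $U_k^\alpha$, $U_k^\beta$ either concentrate at well-separated dyadic frequencies $N_k^\alpha, N_k^\beta$, or share a comparable frequency scale but separate in physical space/time; either way, one invokes the nonlinear almost orthogonality of Proposition \ref{prop:almostorth}, applied after decomposing each $U_k^\alpha$ via Corollary \ref{cor Decomp nonlinear Euclidean profiles} into the pieces $\omega_k^{\alpha,\theta,\pm\infty}$, $\omega_k^{\alpha,\theta}$, and a small remainder $\rho_k^{\alpha,\theta}$. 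The $\omega_k^{\alpha,\theta,\pm\infty}$ satisfy sharp frequency localization $\omega_k^{\alpha,\theta,\pm\infty} = P_{\leq R_\theta N_k^\alpha}\omega_k^{\alpha,\theta,\pm\infty}$, while the $\omega_k^{\alpha,\theta}$ are supported in disjoint space-time neighborhoods once the frames are orthogonal. A case analysis analogous to the three cases in the proof of Proposition \ref{prop:almostorth} then shows that the cross terms are $o_k(1)$, yielding
\begin{align*}
\norm{U^J_{prof,k}}_{X^{\frac{1}{2}}((-1,1))}^2 \leq \sum_{\alpha=0}^J \norm{U_k^\alpha}_{X^{\frac{1}{2}}((-1,1))}^2 + o_k(1) \leq 2(C_1 + 4E_{max}) + o_k(1),
\end{align*}
giving the desired uniform constant $Q = Q(E_{max}, \eta)$.

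The main obstacle is the last paragraph: transferring almost orthogonality from the Hilbert space $\dot H^{\frac{1}{2}}(\T^2)$ to the atomic space $X^{\frac{1}{2}}$, which lacks an inner product structure. The remedy is to work dyadic frequency piece by dyadic frequency piece and to use the three-piece decomposition of Corollary \ref{cor Decomp nonlinear Euclidean profiles}, since within each piece either frequency localization (for $\omega_k^{\alpha,\theta,\pm\infty}$) or spacetime localization (for $\omega_k^{\alpha,\theta}$) converts the $\dot H^{\frac{1}{2}}$-orthogonality furnished by orthogonal frames into genuine smallness of the relevant $U^2$-cross interactions; the tails $\rho_k^{\alpha,\theta}$ are absorbed by sending $\theta \to 0$ after $k \to \infty$.
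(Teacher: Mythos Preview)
Your bound on $\sum_{\alpha=0}^J \norm{U_k^\alpha}_{X^{\frac{1}{2}}}^2$ is correct and essentially the paper's argument: split into finitely many large profiles (each controlled by the induction hypothesis plus Proposition~\ref{prop LWP}~(2)) and the small tail (controlled by small-data theory and the $\ell^2$-summability from \eqref{eq:EnergyDecoupling}).

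The gap is in your treatment of $\norm{U^J_{prof,k}}_{X^{\frac{1}{2}}}$. You propose to expand the square and kill ``cross terms'' via almost orthogonality, but $X^{\frac{1}{2}}$ is built from the $U^2$ norm, which is not a Hilbert norm, so there are no cross terms to speak of; the expression $\norm{\sum_\alpha U_k^\alpha}_{X^{\frac{1}{2}}}^2 = \sum_\alpha \norm{U_k^\alpha}_{X^{\frac{1}{2}}}^2 + (\text{cross})$ has no literal meaning. You flag this yourself, and the proposed remedy via Corollary~\ref{cor Decomp nonlinear Euclidean profiles} is not carried through. Even granting some pairwise smallness, you would face $O(J^2)$ such terms and would need summability, not just $o_k(1)$ for each fixed pair, to get a bound uniform in $J$.

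The paper sidesteps this entirely with a much simpler device: for the small profiles $\alpha > A$, write $U_k^\alpha = e^{it\Delta}U_k^\alpha(0) + \bigl(U_k^\alpha - e^{it\Delta}U_k^\alpha(0)\bigr)$ and split accordingly,
\[
\norm{U^J_{prof,k}}_{X^{\frac{1}{2}}} \leq \sum_{0\leq\alpha\leq A}\norm{U_k^\alpha}_{X^{\frac{1}{2}}} + \Bigl\|e^{it\Delta}\!\!\sum_{A<\alpha\leq J}\!\!U_k^\alpha(0)\Bigr\|_{X^{\frac{1}{2}}} + \sum_{A<\alpha\leq J}\norm{U_k^\alpha - e^{it\Delta}U_k^\alpha(0)}_{X^{\frac{1}{2}}}.
\]
The first sum has $A+1$ terms, each $\lesssim 1$. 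The second term equals $\norm{\sum_{A<\alpha\leq J}U_k^\alpha(0)}_{H^{\frac{1}{2}}}$, and here the genuine $H^{\frac{1}{2}}$-orthogonality of the initial data from Proposition~\ref{prop:ProfileDecomposition} applies directly, giving $\lesssim E_{max}^{1/2}$. The third term is handled by the small-data bound of Proposition~\ref{prop LWP}~(1), $\norm{U_k^\alpha - e^{it\Delta}U_k^\alpha(0)}_{X^{\frac{1}{2}}} \lesssim \norm{U_k^\alpha(0)}_{H^{\frac{1}{2}}}^3 \leq \delta_0^{1/2}\norm{U_k^\alpha(0)}_{H^{\frac{1}{2}}}^2$, so the $\ell^1$-sum over $\alpha>A$ is dominated by $\sum_\alpha \mathcal{L}(\alpha) \lesssim E_{max}$. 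The point is that orthogonality is only ever invoked in $H^{\frac{1}{2}}$, where it is available; the nonlinear corrections are handled by superlinear smallness, which turns an $\ell^1$ sum into an $\ell^2$-type bound without any orthogonality at all.
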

\begin{proof}[Proof of Claim \ref{claim2}]
From \eqref{63} we know that there are only finite many profiles such that  $\mathcal{L}(\alpha)\geq \frac{\dd_0}{2}$. We may assume that for all $\alpha\geq A$,  $\mathcal{L}(\alpha) \leq\dd_0$. Consider $U_k^{\alpha}$ for $k \geq A$, by  Proposition \ref{prop LWP}, we have that
\begin{align*}
&\quad \norm{U_{prof, k}^J}_{X^{\frac{1}{2}}((-1,1))} = \norm{\sum_{0\leq\alpha\leq J}   U_k^\alpha}_{X^{\frac{1}{2}}((-1,1))} \\
& \leq \sum_{0\leq \alpha \leq A} \norm{U_k^{\alpha}}_{X^{\frac{1}{2}}((-1,1))}  +\norm{\sum_{A\leq \alpha\leq J} (U_k^{\alpha} -e^{it\Delta}U_k^{\alpha}(0))}_{X^{\frac{1}{2}}((-1,1))}  +\norm{e^{it\Delta} \sum_{A\leq \alpha \leq J} U_k^{\alpha}(0)}_{X^{\frac{1}{2}}((-1,1))}\\
& \lesssim (A+1)+\sum_{A\leq\alpha\leq J} \norm{U_k^{\alpha}(0)}_{H^{\frac{1}{2}}}^2 + \norm{\sum_{A\leq\alpha\leq J} U_k^{\alpha}(0)}_{H^{\frac{1}{2}}}\\
& \lesssim (A+1) + \sum_{A\leq\alpha\leq J} \mathcal{L}(\alpha) + E_{max}^{\frac{1}{2}}\\
& \lesssim 1.
\end{align*}
Similarly, we write
\begin{align*}
\sum_{\alpha = 0}^{J} \norm{U_k^{\alpha}}_{X^{\frac{1}{2}}((-1,1))}^2 &= \sum_{\alpha =0}^{A-1} \norm{U_k^{\alpha}}_{X^{\frac{1}{2}}((-1,1))}^2 + \sum_{A\leq\alpha\leq J}\norm{U_k^{\alpha}}_{X^{\frac{1}{2}}((-1,1))}^2  \lesssim A + \sum_{A \leq \alpha \leq J} \mathcal{L}(\alpha) \lesssim 1.
\end{align*}
Now we finish the proof of Claim \ref{claim2}.
\end{proof}

Let us continue the proof of Theorem \ref{thm:main2}. Let 
\begin{align*}
U_{app, k}^J = \sum_{0\leq \alpha\leq J} U_k^{\alpha} + e^{it\Delta}R_k^J
\end{align*}
to be a solution of the approximation equation \eqref{eq Approx NLS} with
the error term:
\begin{align*}
e &= (i\partial_t +\Delta_{\T^2}) U_{app, k}^J - F(U_{app, k}^J) = \sum_{0 \leq \alpha \leq J} F(U_k^{\alpha}) - F(\sum_{0\leq \alpha \leq J} U_k^{\alpha} +e^{it\Delta}R_k^J),
\end{align*}
where $F(u) = \abs{u}^4 u$.

From Claim \ref{claim2}, we know $\norm{U_{app, k}^J}_{X^{\frac{1}{2}}((-1,1))}\leq Q$. To continue, we need the following lemma which will be proved in Appendix \ref{sec Apx}.
\begin{lem}\label{lem:6.2}
With the same notation, we obtain that
\begin{align*}
\limsup_{J \to \infty} \limsup_{k\to \infty}   \norm{\sum_{0\leq \alpha\leq J} F(U_k^{\alpha}) - F(\sum_{0\leq \alpha\leq J} U_k^{\alpha}  +e^{it\Delta}R_k^J)}_{N(I_k)} = 0.
\end{align*}
\end{lem}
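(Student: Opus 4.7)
The plan is to expand the quintic nonlinearity $F(u)=\abs{u}^4 u$ as a symmetric multilinear form so that, after the diagonal pieces $\abs{U_k^\alpha}^4 U_k^\alpha$ cancel, the quantity $F(U_{app,k}^J)-\sum_{\alpha=0}^J F(U_k^\alpha)$ splits into two families: \emph{remainder terms}, in which the factor $W_k^J:=e^{it\Delta}R_k^J$ appears at least once, and \emph{cross terms}, which are quintic products of five factors drawn from $\{U_k^0,U_k^1,\ldots,U_k^J\}$ in which at least two distinct profile indices appear. I would dispatch the remainder family using the smallness \eqref{63} and the cross family using the orthogonality captured in Proposition \ref{prop:almostorth}.

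For the remainder terms I would apply Lemma \ref{lem Nonlinear est}, placing $W_k^J$ in the $Z'$-slot whenever it is not used as the $X^{\frac{1}{2}}$-slot factor. Since
$\norm{W_k^J}_{Z'(I_k)} \leq \norm{W_k^J}_{Z(I_k)}^{\frac{1}{2}}\norm{W_k^J}_{X^{\frac{1}{2}}(I_k)}^{\frac{1}{2}}$,
and $\norm{W_k^J}_{Z(I_k)}\to 0$ in the iterated $k\to\infty$, $J\to\infty$ limit by \eqref{63}, while $\norm{W_k^J}_{X^{\frac{1}{2}}(I_k)}$ is uniformly bounded (by the orthogonal decomposition of $\norm{u_k(0)}_{H^{\frac{1}{2}}}^2$, so $\norm{R_k^J}_{H^{\frac{1}{2}}}\lesssim 1$), combining with the $X^{\frac{1}{2}}$-bound of Claim \ref{claim2} forces these contributions to vanish in $N(I_k)$.

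For the cross terms I would fix $\theta>0$ and choose a threshold $A=A(\theta)$ so that $\mathcal{L}(\alpha)\leq\theta$ for $\alpha\geq A$, which is possible by the energy decoupling \eqref{eq:EnergyDecoupling}. For $\alpha\geq A$ Proposition \ref{prop LWP} yields $\norm{U_k^\alpha}_{Z'(I_k)}\lesssim\theta^{\frac{1}{2}}$, while, as in the proof of Claim \ref{claim2}, $\norm{\sum_{A\leq\alpha\leq J} U_k^\alpha}_{X^{\frac{1}{2}}}\lesssim 1$ uniformly in $J$; Lemma \ref{lem Nonlinear est} then bounds every cross term having at least one tail factor by $C_Q\theta^{\frac{1}{4}}$. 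The finitely many residual cross pairs with $\alpha,\beta<A$, $\alpha\neq\beta$, are handled via the decomposition of Corollary \ref{cor Decomp nonlinear Euclidean profiles}: writing each $U_k^\alpha=\omega_k^{\alpha,\theta,-\infty}+\omega_k^{\alpha,\theta,+\infty}+\omega_k^{\alpha,\theta}+\rho_k^{\alpha,\theta}$, the $\rho_k^{\alpha,\theta}$ pieces are $X^{\frac{1}{2}}$-small, and each of the three interaction types between the $\omega$ pieces vanishes as $k\to\infty$ exactly by the trichotomy in the proof of Proposition \ref{prop:almostorth} (either the scales $N_k^\alpha,N_k^\beta$ diverge, or the space-time cores $S_k^{\alpha,\theta},S_k^{\beta,\theta}$ become disjoint, or one of the factors is placed at $\pm\infty$ and its $Z'$-norm is $\leq\theta$).

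The main obstacle is the uniform-in-$J$ bookkeeping of the tail: showing that the aggregate of cross terms involving indices in $[A,J]$ is $O(\theta^{\frac{1}{4}})$ in $N(I_k)$ without losing a power of $J$. This requires both $X^{\frac{1}{2}}$-boundedness of $\sum_{A\leq\alpha\leq J}U_k^\alpha$ and simultaneous $Z'$-smallness of each small profile. The former is obtained, as in Claim \ref{claim2}, via a near-Pythagorean identity for $Y^{-\frac{1}{2}}\times X^{\frac{1}{2}}$ pairings and the nonlinear estimate Lemma \ref{lem Nonlinear est}; the latter via Proposition \ref{prop LWP}\,(1) applied to small-data Euclidean profiles. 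Once these two bounds are in hand, summing the trilinear $N$-norm contributions across all cross indices reduces to controlling $\sum_{\alpha\neq\beta}\mathcal{L}(\alpha)\mathcal{L}(\beta)\leq E_{max}^2$, which is finite; taking $\theta\to 0$ at the end produces the desired vanishing.
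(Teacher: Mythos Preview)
Your outline (split into remainder terms containing $W_k^J:=e^{it\Delta}R_k^J$ and cross terms among the $U_k^\alpha$) matches the paper's decomposition into \eqref{eq:6.8} and \eqref{eq:6.9}, and your treatment of the terms with at least two copies of $W_k^J$ is fine. The genuine gap is the term $\oo_{4,1}(U_{prof,k}^J, W_k^J)$ with a \emph{single} copy of $W_k^J$. Applying Lemma \ref{lem Nonlinear est} produces, among the five summands, the contribution
\[
\norm{W_k^J}_{X^{\frac{1}{2}}(I_k)}\,\norm{U_{prof,k}^J}_{Z'(I_k)}^4,
\]
in which $W_k^J$ occupies the $X^{\frac{1}{2}}$-slot and no smallness is available: Claim \ref{claim2} only gives $\norm{U_{prof,k}^J}_{Z'(I_k)}\lesssim Q$, not $o(1)$. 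Your sentence ``placing $W_k^J$ in the $Z'$-slot whenever it is not used as the $X^{\frac{1}{2}}$-slot factor'' acknowledges this case exists but you never return to cover it.

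This is precisely the hard term. The paper first reduces to finitely many profiles and then to the pure bubble interaction $\oo_{4,1}(\omega_k^{\alpha,\theta}, W_k^J)$, and closes it by a frequency splitting $W_k^J = P_{>BN_k^\alpha}W_k^J + P_{\leq BN_k^\alpha}W_k^J$. The high-frequency part requires Lemma \ref{lem:7.1}, a local-smoothing estimate proved via a $TT^*$ argument and Schur's lemma that exploits the space-time concentration of $\omega_k^{\alpha,\theta}$; this cannot be extracted from Lemma \ref{lem Nonlinear est}. The low-frequency part uses the refined estimate \eqref{lem3.8 2}, which forces the frequency-localized factor $P_{\leq BN_k^\alpha}W_k^J$ into the $Z'$-slot. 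The same Lemma \ref{lem:7.1} reappears in the cross-term analysis (Case~4 of \eqref{eq:6.9}, the interaction $\oo_{4,1}(\omega_k^{\alpha,\theta},\omega_k^{\beta,\theta,\pm\infty})$ with divergent scale ratio), so your appeal to ``the trichotomy in the proof of Proposition \ref{prop:almostorth}'' is circular: that proof itself defers to Case~4 of \eqref{eq:6.9}.
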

Assuming Lemma \ref{lem:6.2}, we obtain that for $J\geq J_0(\ee)$
\begin{align*}
\limsup_{k\to\infty} \norm{e}_{N(I_k)} \leq \ee/2.
\end{align*}
We use Proposition \ref{prop Stability} to conclude that $u_k$ satisfies
\begin{align*}
\norm{u_k}_{X^1(I_k)} \lesssim \norm{U_{app, k}^J}_{X^{\frac{1}{2}}(I_k)} \leq
\norm{U_{prof, k}^J}_{X^{\frac{1}{2}}((-1,1))} \norm{e^{it\Delta}R_k^J}_{X^{\frac{1}{2}}((-1,1))} \lesssim 1.
\end{align*}
which contradicts \eqref{6.2}.

Now we finish the proof of Theorem \ref{thm:main2}. 
\end{proof}

\appendix
\section{Proof of Lemma \ref{lem:6.2}}\label{sec Apx}
We first write that
\begin{align*}
& \quad \norm{\sum_{0\leq \alpha\leq J} F(U_k^\alpha) - F(U_{prof, k}^J +e^{it\DD}R_k^J)}_{N(I_k)} \\
& \leq \norm{F(U_{prof, k}^J+e^{it\Delta}R_k^J))-F(U_{prof, k}^J)}_{N(I_k)}+\norm{F(U_{prof, k}^J) -\sum_{0 \leq \alpha \leq J} F(U_k^{\alpha})}_{N(I_k)}.
\end{align*}
It will suffice to prove
\begin{align}\label{eq:6.8}
\limsup_{J\to\infty}\limsup_{k\to \infty} \norm{F(U_{prof, k}^J+e^{it\Delta}R_k^J))-F(U_{prof, k}^J)}_{N(I_k)} = 0,
\end{align}
and
\begin{align}\label{eq:6.9}
\limsup_{J\to\infty}\limsup_{k\to \infty} \norm{F(U_{prof, k}^J) -\sum_{0\leq \alpha\leq J} F(U_k^\alpha)}_{N(I_k)} = 0.
\end{align}
Lemma \ref{lem:7.1} and Lemma \ref{lem:7.2} are needed in the proof of \eqref{eq:6.8} and \eqref{eq:6.9}. Hence let us first prove Lemma \ref{lem:7.1} and Lemma \ref{lem:7.2}.

\subsection{Lemmas needed in \eqref{eq:6.8} and \eqref{eq:6.9}}
Let  $\oo_{p,q}(a,b)$ be a $(p+q)$ - linear expression with $p$ factors consisting of either $\overline{a}$ or $a$ and $q$ factors consisting of either $\overline{b}$ or $b$.
\begin{lem}[a high-frequency linear solution does not interact significantly with a low-frequency profile]\label{lem:7.1}
Let $B, N\geq 2$ be dyadic numbers. Assume that $\w : (-1, 1) \times \T^2 \to \C$ is a function satisfying
\begin{align*}
\abs{\nabla^j \w} \leq N^{j+\frac{1}{2}} \mathds{1}_{ \{ \abs{t} \leq N^{-2}  , \abs{x} \leq N^{-1}\} }, \quad j=0, 1.
\end{align*}
Then the following holds
\begin{align*}
\norm{\oo_{4,1}(\w, e^{it\Delta} P_{>BN} f)}_{N((-1,1))} \lesssim (B^{-1/{200}}+N^{-1/{200}}) \norm{f}_{H_x^{\frac{1}{2}}(\T^2)}.
\end{align*}
\end{lem}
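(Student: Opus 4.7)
The plan is to exploit the frequency separation: $\omega^4$ has spatial Fourier support essentially at scales $\lesssim N$ (with polynomial decay beyond, the best one can extract from just two derivative bounds), while $e^{it\Delta} P_{>BN} f$ is purely at frequencies $>BN \gg N$. The gain $(B^{-1/200}+N^{-1/200})$ should arise from this mismatch.

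First, by the duality Proposition \ref{prop Duality X}, reduce to estimating
\[
I(v) := \int_{-1}^{1}\int_{\T^2} \omega^4 \cdot e^{it\Delta} P_{>BN} f \cdot \bar v \, dxdt
\]
uniformly over $v$ with $\|v\|_{Y^{-1/2}((-1,1))} = 1$ (here $\omega^4$ stands for a four-fold product of $\omega$'s and $\bar\omega$'s). From the pointwise hypotheses one computes $\|\omega^4\|_{L^p_{t,x}}\lesssim N^{2-4/p}$ for all $p\in[1,\infty]$, and $\|\nabla(\omega^4)\|_{L^\infty_t L^2_x}\lesssim N^2$; interpolating yields the Fourier-decay bound $\|P_L\omega^4(t)\|_{L^2_x}\lesssim N(N/L)^s$ for $s\in[0,1]$ and $L\geq N$, which quantifies that $\omega^4$ is essentially frequency-localized at scale $\lesssim N$.

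Next, I would dyadically decompose $f = \sum_{M > BN} P_M f$ and $v = \sum_K P_K v$. By the Fourier decay above, the contributions from $K$ outside a fixed dyadic neighborhood of $M$ are controlled by $B^{-s}\|f\|_{H^{1/2}}$ for arbitrary $s\in(0,1]$, and are absorbed into the required gain. For the diagonal regime $K\sim M > BN$, I would write $\omega^4 = \omega^2 \cdot \omega^2$, split by Cauchy--Schwarz
\[
|I_{M,K}|\leq \|\omega^2 \cdot e^{it\Delta} P_M f\|_{L^2_{t,x}}\cdot\|\omega^2 \cdot P_K v\|_{L^2_{t,x}},
\]
and estimate each factor by H\"older, using $\|\omega^2\|_{L^p_{t,x}}\lesssim N^{1-4/p}$ together with Strichartz (Lemmas \ref{lem Strichartz1} and \ref{lem Strichartz2}) for $e^{it\Delta} P_M f$ in $L^p_{t,x}$, and the embedding $Y^0\hookrightarrow L^q_{t,x}$ (combined with the a priori bound $\|P_K v\|_{V^2}\leq K^{1/2}\|v\|_{Y^{-1/2}}$) for $P_K v$ in $L^q_{t,x}$.

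Summing over $M$ via Cauchy--Schwarz against $\|f\|_{H^{1/2}}^2 = \sum_M \langle M\rangle\|P_M f\|_{L^2}^2$ closes the estimate. The main obstacle is the delicate balance of exponents: Strichartz on $\T^2$ holds for $L^p_{t,x}$ only when $p>4$, forcing the factor $M^{1-4/p}$ to grow in $M>BN$. To make the overall product decay, one must push the H\"older conjugate on $\omega^2$ and $\omega^4$ to small values, which supplies only polynomial gain in $N$ and $B$. Optimizing the H\"older exponents slightly above $4$ for the Strichartz pieces and accordingly for $\omega$ yields a net gain of $(B\wedge N)^{-\epsilon}$ for some tiny $\epsilon>0$---the $1/200$ in the statement is the margin remaining after this balancing. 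A secondary technical point is that the Fourier decay of $\omega^4$ is polynomial rather than super-polynomial because only the first spatial derivative of $\omega$ is controlled; however, polynomial decay is all that is needed for any fixed small $s$.
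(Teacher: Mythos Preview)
Your diagonal estimate does not close. In the regime $K\sim M>BN$, after Cauchy--Schwarz and H\"older with $\tfrac1p+\tfrac1r=\tfrac12$, $r>4$, one has
\[
\|\omega^2\|_{L^p_{t,x}}\,\|e^{it\Delta}P_Mf\|_{L^r_{t,x}}
\lesssim N^{1-4/p}\,M^{1-4/r}\,\|P_Mf\|_{L^2}
=(M/N)^{4/p-1}\,\|P_Mf\|_{L^2},
\]
and $4/p-1>0$ for every admissible $r>4$. The analogous factor for $P_Kv$ carries $(K/N)^{0+}K^{1/2}$. Pairing with $\|P_Mf\|_{L^2}\sim M^{-1/2}\|P_Mf\|_{H^{1/2}}$ and $\|P_Kv\|_{Y^0}\sim K^{1/2}\|P_Kv\|_{Y^{-1/2}}$, the net weight is $(M/N)^{+\epsilon}$, a \emph{loss} in $B$, not the claimed gain $(B\wedge N)^{-\epsilon}$. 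No choice of H\"older exponents near~$4$ reverses the sign; the torus Strichartz loss $M^{1-4/r}$ always dominates the gain from $\|\omega^2\|_{L^p}$. Your off-diagonal piece is fine, but the diagonal is exactly where the work lies.

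The missing mechanism is dispersive (time-frequency), not spatial. The paper first passes to $L^1_tH^{1/2}_x$ via \eqref{eq Embed}, distributes the half-derivative, and reduces to bounding
\[
N^{-2}\langle \partial_j f,\,K\,\partial_j f\rangle_{L^2\times L^2},\qquad
K:=P_{>BN}\int_{\R} e^{-it\Delta}\,W\,e^{it\Delta}\,dt\;P_{>BN},
\]
with $W(t,x)=N^4\eta(N\Psi^{-1}(x))\eta(N^2t)$ a smooth envelope for $\omega^4$. The Fourier coefficients are $c_{p,q}=\widehat{W}(p-q,\,|q|^2-|p|^2)$, so the \emph{time} integration supplies decay in $\big||p|^2-|q|^2\big|/N^2$ in addition to the spatial decay in $|p-q|/N$. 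Schur's lemma then reduces $\|K\|_{L^2\to L^2}$ to a lattice-point count: for $|p|>BN$, the set $\{v:|v|\lesssim N,\ |v\cdot p|\lesssim N^2\}$ has $\lesssim N^2 B^{-c}$ points, which is where the $B^{-1/100}$ originates. A purely spatial-frequency argument cannot see the constraint $|v\cdot p|\lesssim N^2$ and therefore cannot recover this gain.
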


\begin{proof}[Proof of Lemma \ref{lem:7.1}]
Without loss of generality, we assume that $\norm{f}_{H^{\frac{1}{2}} (\T^2)} =1 $ and $f = P_{> BN} f$. Using \eqref{eq Embed}, we have that 
\begin{align*}
&\quad \norm{\oo_{4,1} (\w, e^{it \Delta} P_{>BN} f)}_{N((-1,1))} \lesssim \norm{\oo_{4,1} (\w , e^{it\Delta} P_{>BN} f)}_{L_t^1 H_x^{\frac{1}{2}}} \\
& \lesssim \norm{\oo_{4,1} (\w , e^{it\Delta} P_{>BN} \abs{\nabla}^{\frac{1}{2}} f)}_{L_t^1 L_x^2} + \norm{e^{it\Delta} f}_{L_t^{\infty} L_x^2} \norm{\w}_{L_t^4 L_x^{\infty}}^3 \norm{\abs{\abs{\nabla}^{\frac{1}{2}} \w } + \abs{\w} }_{L_t^4 L_x^{\infty}} \\
& \lesssim \norm{\oo_{4,1} (\w , \abs{\nabla}^{\frac{1}{2}} e^{it\Delta} f)}_{L_t^1 L_x^2} + B^{-\frac{1}{2}} .
\end{align*}
The last inequality above holds since
\begin{align*}
&\norm{\w}_{L_t^4 L_x^{\infty}}  \leq \parenthese{\int_{\abs{t} \leq N^{-2}} \abs{N^{\frac{1}{2}}}^4 \, dt }^{\frac{1}{4}} \lesssim 1\\
& \norm{\abs{\nabla}^{\frac{1}{2}}  \w}_{L_t^4 L_x^{\infty}}   \leq \parenthese{\int_{\abs{t} \leq N^{-2}} \abs{N}^4 \, dt }^{\frac{1}{4}} \lesssim N^{\frac{1}{2}}\\
& \norm{e^{it\Delta}  f}_{L_t^{\infty} L_x^2}  \leq \norm{P_{> BN} f}_{L_x^2} \leq (BN)^{-\frac{1}{2}} .
\end{align*}
Now let $W (x,t) : = N^4 \eta_{\R^2} (N \psi^{-1} (x)) \cdot \eta_{\R} (N^2 t)$, then we write
\begin{align*}
& \quad \norm{\oo_{4,1} (\w, \nabla e^{it\Delta} f)}_{L^1((-1,1), L_x^2)}^2 \lesssim \norm{\w}_{L_t^8 L_x^{\infty}}^8 N^{-4} \norm{W^{\frac{1}{2}} \nabla e^{it\Delta} f}_{L_{t,x}^2 ((-1, 1) \times \T^2 )}^2\\
& \lesssim N^{-2} \sum_{j=1}^2 \int_{-1}^1 \inner{e^{it\Delta} \partial_j f , W e^{it\Delta} \partial_j f}_{L^2 \times L^2} \, dt\\
& \lesssim N^{-2} \sum_{j=1}^2 \inner{ \partial_j f , \parenthese{ \int_{-1}^1 e^{-it\Delta} W e^{it\Delta} \, dt } \partial_j f}_{L^2 \times L^2(\T^2)} .
\end{align*}
Therefore, it remains to prove that 
\begin{align}\label{eq K bdd}
\norm{K}_{L^2 (\T^2) \to L^2 (\T^2)} \lesssim N^2 (B^{-1/100} + N^{-1/100}),
\end{align}
where $K= P_{> BN} \int_{\R} e^{-it\Delta} W e^{it\Delta} P_{>BN} \, dt$.

We compute the Fourier coefficients of $K$ as follows:
\begin{align*}
c_{p,q} & = \inner{e^{ipx} , Ke^{iqx}} = \int_{\T^2} \overline{P_{>BN} e^{ipx} } \int_{\R} e^{-it\Delta} W e^{it\Delta} P_{>BN} e^{iqx} \, dt dx \\
& = (1- \eta_{\R^2}) (\frac{p}{BN}) (1-\eta_{\R^2}) (\frac{q}{BN}) \int_{(-1,1) \times \T^2} e^{it (\abs{p}^2 -\abs{q}^2) + i (q-p) \cdot x} W(x,t) \, dxdt\\
& = (\mathcal{F}_{x,t} W) (p-q, \abs{q}^2 - \abs{p}^2) (1-\eta_{\R^2}) (\frac{p}{BN}) (1- \eta_{\R^2}) (\frac{q}{BN}).
\end{align*}
By the property of Fourier transform of $W$, we have that
\begin{align*}
\abs{c_{p,q}} \lesssim \parenthese{1+ \frac{\abs{\abs{p}^2 -\abs{q}^2}}{N^2}}^{-10} \parenthese{ 1+ \frac{\abs{p-q}}{N^2}}^{-10} \mathds{1}_{\{ \abs{p} \geq BN \}} \mathds{1}_{\{ \abs{q} \geq BN \}} .
\end{align*}
Then using Schur's lemma, we get
\begin{align*}
\norm{K}_{L^2(\T^2) \to L^(\T^2)} \lesssim \sup_{p \in \Z^2} \sum_{q \in \Z^2} \abs{c_{p,q}} + \sup_{q \in \Z^2} \sum_{p \in \Z^2} \abs{c_{p,q}}.
\end{align*}
Now to obtain \eqref{eq K bdd}, it is sufficient to prove the following:
\begin{align}\label{eq K bdd equiv}
N^{-2} \sup_{\abs{p} \geq BN} \sum_{v \in \Z^2} \parenthese{1 + \frac{\abs{\abs{p}^2 - \abs{p+v}^2}}{N^2}}^{-10}  \parenthese{1+\frac{\abs{v}}{N}}^{-10} \lesssim (B^{-1/100} + N^{-1/100}).
\end{align}
We will prove \eqref{eq K bdd equiv} in the following three cases:
\begin{enumerate}[\bf (1)]
\item $\abs{v} \geq NB^{1/100}$
\item $\abs{v} < NB^{1/100}$, $\abs{v \cdot p} \geq N^2 B^{1/10}$
\item $\abs{v} < NB^{1/100}$, $\abs{v \cdot p} < N^2 B^{1/10}$.
\end{enumerate}

Let us discuss case by case.

\case{1}{$\abs{v} \geq NB^{1/100}$.}
We compute 
\begin{align*}
& \quad \sum_{\abs{v} \geq NB^{1/100}} \parenthese{1 + \frac{\abs{\abs{p}^2 - \abs{p+v}^2}}{N^2}}^{-10}  \parenthese{1+\frac{\abs{v}}{N}}^{-10}  \lesssim \sum_{\abs{v} \geq NB^{1/100}} \parenthese{1+\frac{\abs{v}}{N}}^{-10} \\
& \lesssim \int_{\abs{v} \geq N, v \in \R^2} \parenthese{1+\frac{\abs{v}}{N}}^{-10}  \, dv  \lesssim \parenthese{1+\frac{NB^{1/100}}{N}}^{-8} \lesssim B^{-8/100}. 
\end{align*}

\case{2}{$\abs{v} < NB^{1/100}$, $\abs{v \cdot p} \geq N^2 B^{1/10}$.} 
We write
\begin{align*}
& \quad \sum_{\substack{\abs{v} \leq NB^{1/100} \\ \abs{v \cdot p} \geq N^2 B^{1/100}}}  \parenthese{1 + \frac{\abs{\abs{p}^2 - \abs{p+v}^2}}{N^2}}^{-10}  \parenthese{1+\frac{\abs{v}}{N}}^{-10} \\
& \lesssim \sum_{\substack{\abs{v} \leq NB^{1/100} \\ \abs{v \cdot p} \geq N^2 B^{1/100}}}  \parenthese{1+\frac{2\abs{v \cdot p}}{N}}^{-10} \lesssim (1 + B^{1/10})^{-8} \lesssim B^{-8/10} .
\end{align*}

\case{3}{$\abs{v} < NB^{1/100}$, $\abs{v \cdot p} < N^2 B^{1/10}$.}
Denote $\hat{p} = \frac{p}{\abs{p}}$ and write
\begin{align*}
& \quad N^{-2} \sup_{\abs{p} \geq BN} \sum_{\substack{\abs{v} \leq NB^{1/100} \\ \abs{v \cdot p} \leq N^2 B^{1/10}}}  \parenthese{1 + \frac{\abs{\abs{p}^2 - \abs{p+v}^2}}{N^2}}^{-10}  \parenthese{1+\frac{\abs{v}}{10}}^{-10} \\
& \leq N^{-2} \sup_{\abs{p} > BN} \sum_{\substack{\abs{v} \leq NB^{1/100} \\ \abs{v \cdot \hat{p}} \leq N B^{-9/10}}} 1  \leq N^{-2} \sup_{\abs{p} >BN} \# \bracket{v : \abs{v} \leq NB^{1/100} , \abs{v \cdot \hat{p}} \leq NB^{-9/10} }\\
& \leq N^{-2} (NB^{1/100}) NB^{-9/10} \leq B^{-89/100}.
\end{align*}
These three cases above give \eqref{eq K bdd equiv}. Then Lemma \ref{lem:7.1} follows.
\end{proof}

\begin{lem}\label{lem:7.2}
Assume that $\OO_\alpha = (N_{k,\alpha},t_{k,\alpha}, x_{k,\alpha})_k \in \mathcal{F}_e$, $\alpha\in\{1, 2\}$, are two orthogonal frames, $I \subseteq (-1, 1)$ is a fixed open interval, $0 \in I$, and $T_1$, $T_2$, $R \in [1,\infty)$ are fixed numbers, $R \geq T_1 + T_2$. For $k$ large enough,
for $\alpha \in\{1, 2\}$
\begin{align*}
\abs{ \nabla_x^m \w_k^{\alpha, \theta}}+(N_{k, \alpha})^{-2}\mathds{1}_{S_k^{\alpha, \theta}} \abs{\partial_t \nabla_x^m \w_k^{\alpha, \theta}} \leq R_{\theta, \alpha} (N_k^\alpha)^{\abs{m}+\frac{1}{2}}\mathds{1}_{S_k^{\alpha, \theta}},m \in \N^2, \,  0 \leq \abs{m} \leq 10,
\end{align*}
where
\begin{align*}
S_k^{\alpha,\theta} :=\{ (t,x) \in I \times \T^2  : \abs{t-t_{k,\alpha}} < T_{\alpha}(N_{k,\alpha})^{-2},\ \abs{ x-x_{k,\alpha}} \leq R_{\theta, \alpha} (N_{k,\alpha})^{-1}\}.
\end{align*}
Assume that $(\w_{k,1}, w_{k,2}, f_k, g_k, h_k)_k$ are a sequence of quintuple functions with properties $\norm{f_k}_{X^1(I)}\leq 1$, $\norm{g_k}_{X^1(I)}\leq 1$ and $\norm{h_k}_{X^1(I)}\leq 1$ for all $k$ large enough. 

Then
\begin{align*}
 \limsup_{k\to\infty} \norm{\w_{k, 1}\, \w_{k, 2}\, f_k\,g_k\, h_k}_{N(I)} = 0.
\end{align*}
\end{lem}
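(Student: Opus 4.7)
The plan is to perform a case analysis on how the orthogonality of the frames $\mathcal{O}_1$ and $\mathcal{O}_2$ is realized. After passing to a subsequence I may assume $N_{k,1}\le N_{k,2}$ for every $k$, and the orthogonality condition then forces one of the following alternatives: either (a) the scales separate, $N_{k,2}/N_{k,1}\to\infty$, or (b) $N_{k,1}\sim N_{k,2}=:N_k$ while $N_k^2|t_{k,1}-t_{k,2}|+N_k|x_{k,1}-x_{k,2}|\to\infty$.

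In the comparable-scale case (b), for $k$ sufficiently large the two cylinders $S_k^{1,\theta}$ and $S_k^{2,\theta}$---each of time-radius $\lesssim_\theta N_k^{-2}$ and space-radius $\lesssim_\theta N_k^{-1}$---are disjoint, so $\omega_{k,1}\omega_{k,2}\equiv 0$ and the claim is trivial. This mirrors exactly the treatment of the first subcase of Case 3 in the proof of Proposition \ref{prop:almostorth}.

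In the scale-separated case (a), I would invoke Proposition \ref{prop Duality X} (duality in $X^{1/2}$) to reduce $\norm{\omega_{k,1}\omega_{k,2}f_kg_kh_k}_{N(I)}$ to pairings of the form $\left|\int_{I\times\T^2}\omega_{k,1}\omega_{k,2}f_kg_kh_k\bar v\,dx\,dt\right|$ against $v\in Y^{-\frac12}(I)$ of unit norm. The key bilinear input is the support-based bound
\[
\norm{\omega_{k,1}\omega_{k,2}}_{L_{t,x}^p(I\times\T^2)}\lesssim_\theta N_{k,1}^{\frac12}\,N_{k,2}^{\frac12-\frac{4}{p}},
\]
which follows from the pointwise estimate $|\omega_{k,1}|\lesssim_\theta N_{k,1}^{1/2}$, the confinement of $\omega_{k,2}$ to $S_k^{2,\theta}$, and the volume bound $|S_k^{2,\theta}|\lesssim_\theta N_{k,2}^{-4}$. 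After a Littlewood--Paley decomposition of the four remaining factors, H\"older combined with the Strichartz estimate of Lemma \ref{lem Strichartz2} and the embedding $Y^s\hookrightarrow V_\Delta^2 H^s$ of Remark \ref{rmk Embed} then yields a bound of the form $(N_{k,1}/N_{k,2})^{\delta}\norm{f_k}_{X^{\frac12}}\norm{g_k}_{X^{\frac12}}\norm{h_k}_{X^{\frac12}}\norm{v}_{Y^{-\frac12}}$ for some $\delta>0$, which tends to zero.

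The main obstacle is arranging the H\"older exponents so that the gain $(N_{k,1}/N_{k,2})^{\delta}$ survives after distributing the half-derivative cost onto the four $X^{\frac12}/Y^{-\frac12}$ factors. When the frequency of $v$ (or of one of $f_k,g_k,h_k$) exceeds $N_{k,2}$ the naive H\"older argument loses this gain; in that regime I would exploit the full regularity of the bump factors given by the hypothesis $|\nabla_x^m\omega_{k,\alpha}|\lesssim_\theta N_{k,\alpha}^{|m|+\frac12}$ for $0\le|m|\le 10$ together with the analogous bound on $\partial_t\nabla_x^m\omega_{k,\alpha}$, and integrate by parts to shift derivatives onto $\omega_{k,\alpha}$ so that each extra derivative costs only the benign factor $N_{k,\alpha}$. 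This is the analogue in the current five-linear setting of the Schur-type frequency estimate carried out in Lemma \ref{lem:7.1} for the product of a single bump with a high-frequency linear evolution.
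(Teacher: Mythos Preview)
Your case split and the treatment of comparable scales (disjoint supports, product vanishes) are correct and match the paper. In the scale-separated case, however, your route diverges from the paper's and the final step does not close.

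The paper (using the opposite labeling $N_{k,1}\gg N_{k,2}$) does not unwind the duality and redo a H\"older/Strichartz analysis. Instead it first replaces the low-frequency bump $\omega_{k,2}$ by its time-restriction $\widetilde\omega_{k,2}:=\omega_{k,2}\,\mathbf{1}_{\{|t-t_{k,1}|<T_1N_{k,1}^{-2}\}}$ to the short time window of the high-frequency bump. A direct computation (via $\|\cdot\|_{X^{1/2}}\lesssim\|\cdot(0)\|_{H^{1/2}}+\|(i\partial_t+\Delta)\cdot\|_{L^1_tH^{1/2}_x}$ and Bernstein) then gives the five facts of \textbf{Claim~\ref{claim1}}: $\|\omega_{k,1}\|_{X^{1/2}},\|\widetilde\omega_{k,2}\|_{X^{1/2}}\lesssim_R 1$; $\|P_{\le\varepsilon^{10}N_{k,1}}\omega_{k,1}\|_{X^{1/2}}\lesssim\varepsilon$; $\|P_{>\varepsilon^{-10}N_{k,2}}\widetilde\omega_{k,2}\|_{X^{1/2}}\lesssim_R\varepsilon$; and, crucially, $\|\widetilde\omega_{k,2}\|_{Z}\lesssim_R\varepsilon$, this last coming precisely from the \emph{time} restriction (length $\sim N_{k,1}^{-2}\ll N_{k,2}^{-2}$). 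With $\varepsilon^{10}N_{k,1}\gg\varepsilon^{-10}N_{k,2}$, a three-term frequency splitting of $\omega_{k,1}\widetilde\omega_{k,2}$ feeds directly into the already-proved nonlinear estimate Lemma~\ref{lem Nonlinear est} (including \eqref{lem3.8 2}), giving $\|\omega_{k,1}\omega_{k,2}f_kg_kh_k\|_{N(I)}\lesssim_R\varepsilon$ with no further frequency-by-frequency work.

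Your proposed fix for the high-frequency regime---integration by parts together with an analogue of Lemma~\ref{lem:7.1}---does not go through as stated. The Schur-type bound in Lemma~\ref{lem:7.1} hinges on the extra factor being a \emph{free} evolution $e^{it\Delta}P_{>BN}f$: that is what makes the kernel $K=P_{>BN}\int e^{-it\Delta}We^{it\Delta}\,dt\,P_{>BN}$ explicit in spatial Fourier so that Schur's lemma applies. For general $f_k,g_k,h_k\in X^{1/2}$ and $v\in Y^{-1/2}$ there is no such diagonalization, and shifting spatial derivatives onto the bumps does not by itself produce the needed $(N_{k,1}/N_{k,2})^{\delta}$ gain uniformly over all frequency configurations of the four remaining factors. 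The paper sidesteps this entirely: the delicate multilinear frequency interactions were already handled once in Proposition~\ref{prop Loc Trilinear}/Remark~\ref{rmk Trilinear}/Lemma~\ref{lem Nonlinear est}, and the present lemma only needs to manufacture smallness in $X^{1/2}$ or $Z'$ for one of the bump factors, which Claim~\ref{claim1} provides.
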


\begin{proof}[Proof of Lemma \ref{lem:7.2}]
Fix $\varepsilon >0$ small. We consider the following two cases.
\begin{enumerate}
\item
If 
\begin{align*}
\frac{N_{k,1}}{N_{k,2}} + \frac{N_{k,1}}{N_{k,1}} \leq \varepsilon^{-1000}
\end{align*}
and $k$ is large enough then $S_{k,1} \cap S_{k,2} =  \varnothing$. In this case $\w_{k, 1}\, \w_{k, 2}\, f_k\,g_k\, h_k \equiv 0$. 
\item
Otherwise, if
\begin{align*}
\frac{N_{k,1}}{N_{k,2}} \geq \frac{1}{2} \varepsilon^{-1000},
\end{align*}
we denote that
\begin{align*}
\w_{k,1} \w_{k,2} = \w_{k.1} \widetilde{\w}_{k,2} : = \w_{k,1} \parenthese{\w_{k,2} \mathds{1}_{\{ (t_{k,1}-T_1N_{k,1}^{-2} , t_{k,1} + T_1 N_{k,1}^{-2}) \} } (t)} .
\end{align*}
\end{enumerate}

Here we claim the following properties of the quintuple functions.
\begin{claim}\label{claim1}
For $k$ large enough, we have
\begin{enumerate}[\bf (1)]
\item
$\norm{\widetilde{\w}_{k,2}}_{X^{\frac{1}{2}} (I)} \lesssim_R 1$,
\item
$\norm{P_{> \varepsilon^{-10} N_{k,2}} \widetilde{\w}_{k,2}}_{X^{\frac{1}{2}}(I)} \lesssim_R \varepsilon$,
\item
$\norm{\widetilde{\w}_{k,2}}_{Z(I)} \lesssim_R \varepsilon$,
\item
$\norm{\w_{k,1}}_{X^{\frac{1}{2}}(I)} \lesssim_R 1$,
\item
$\norm{P_{\leq \varepsilon^{10} N_{k,1}} \w_{k,1}}_{X^{\frac{1}{2}}(I)} \lesssim \varepsilon$.
\end{enumerate}
\end{claim}

Assuming Claim \ref{claim1} and using Lemma \ref{lem Nonlinear est} and $\varepsilon^{10} N_1 \gg \varepsilon^{-10} N_2$, we obtain that
\begin{align*}
 \norm{\w_{k,1} \, \w_{k,2} \, f_k \, g_k \, h_k}_{N(I)} & \leq \norm{(P_{\leq \varepsilon^{10} N_{k,1}} \w_{k,1}) \widetilde{\w}_{k,2} \, f_k \, g_k \, h_k}_{N(I)}\\
& \quad + \norm{(P_{> \varepsilon^{10} N_{k,1}} \w_{k,1}) (P_{> \varepsilon^{-10} N_{k,2}} \w_{k,2} ) \, f_k \, g_k \, h_k}_{N(I)}  \\
& \quad + \norm{(P_{> \varepsilon^{10} N_{k,1}} \w_{k,1}) (P_{\leq \varepsilon^{-10} N_{k,2}} \w_{k,2} ) \, f_k \, g_k \, h_k}_{N(I)}\\
& \lesssim _R \varepsilon,
\end{align*}
which implies Lemma \ref{lem:7.2}.

Now we are left to prove Claim \ref{claim1}.
\begin{proof}[Proof of Claim \ref{claim1}]
We show the properties in  this claim one by one.
\begin{enumerate}[\bf (1)]
\item
Using the integral equation, we write
\begin{align*}
\norm{\widetilde{\w}_{k,2}}_{X^{\frac{1}{2}}(I)} & \lesssim \norm{\widetilde{\w}_{k,2} (0)}_{H^{\frac{1}{2}}} + \norm{(i\partial_t + \Delta) \widetilde{\w}_{k,2}}_{L_t^1 H_x^{\frac{1}{2}} (I \times \T^2)} \\
& \lesssim \parenthese{\int_{\abs{x-x_{k,2}} \leq R N_{k,2}^{-1}} \abs{\abs{\nabla}^{\frac{1}{2}} \widetilde{\w}_{k,2} (0)}^2 \, dx }^{\frac{1}{2}} + \norm{(i \partial_t + \Delta) \widetilde{\w}_{k,2} }_{L_t^1 H_x^{\frac{1}{2}} (I \times \T^2)}\\
& \lesssim R^2 + \int_{\abs{t- t_{k,2} }\leq T_2 N_{k,2}^{-2} } \norm{\partial_t \widetilde{\w}_{k,2}}_{H_x^{\frac{1}{2}}} + \norm{\Delta \widetilde{\w}_{k,2}}_{H_x^{\frac{1}{2}}} \, dt \\
& \lesssim R^2 + R^2 T \lesssim_R 1 .
\end{align*}

\item
For the high frequency part of $\widetilde{\w}_{k,2}$, using the integral equation again, we have
\begin{align*}
\norm{P_{> \varepsilon^{-10}N_{k,2}} \widetilde{\w}_{k,2}}_{X^{\frac{1}{2}} (I)} & \lesssim \norm{P_{> \varepsilon^{-10} N_{k,2}}  \widetilde{\w}_{k,2} (0)}_{H^{\frac{1}{2}}} + \norm{(i\partial_t + \Delta) P_{> \varepsilon^{-10} N_{k,2} } \widetilde{\w}_{k,2}}_{L_t^1 H^{\frac{1}{2}}} \\
& \lesssim \frac{\varepsilon^{10}}{N_{k,2}} \norm{\abs{\nabla}^{\frac{3}{2}} \widetilde{\w}_{k,2} (0)}_{L_x^2} + \frac{\varepsilon^{10}}{N_{k,2}} \norm{(i\partial_t +\Delta) \nabla \widetilde{\w}_{k,2}}_{L_t^1 H_x^{\frac{1}{2}}}\\
& \lesssim \frac{\varepsilon^{10}}{N_{k,2}} N_{k,2} (R^2 + R^2 T) \lesssim_R \varepsilon^{10}.
\end{align*}

\item
Consider the $Z$-norm of $\widetilde{\w}_{k,2}$. For $p= p_0, p_1$, 
\begin{align*}
\norm{\widetilde{\w}_{k,2}}_{Z(I)} & \lesssim \parenthese{\sum_N  N^{4-\frac{p}{2}} \norm{P_N \widetilde{\w}_{k,2}}_{L_{t,x}^p (\T^2 \times (t_{k,1} - RN_{k,1}^{-2} , t_{k,1} + RN_{k,1}^{-2}))}^p  }^{\frac{1}{p}}\\
& \lesssim \parenthese{\sum_N  \norm{\abs{\nabla}^{\frac{4}{p}-\frac{1}{2}}  P_N \widetilde{\w}_{k,2}}_{L_{t,x}^p (\T^2 \times (t_{k,1} -RN_{k,1}^{-2} , t_{k,1} + RN_{k,1}^{-2}))}^p  }^{\frac{1}{p}}\\
& \lesssim   \norm{ \parenthese{ \sum_N \abs{\nabla}^{\frac{4}{p}-\frac{1}{2}}  P_N \widetilde{\w}_{k,2} }^{\frac{1}{2}}}_{L_{t,x}^p (\T^2 \times (t_{k,1} -RN_{k,1}^{-2} , t_{k,1} + RN_{k,1}^{-2}))}\\
& \lesssim  \norm{  \abs{\nabla}^{\frac{4}{p}-\frac{1}{2}}   \widetilde{\w_{k,2}} }_{L_{t,x}^p ( (t_{k,1} -RN_{k,1}^{-2} , t_{k,1} + RN_{k,1}^{-2}) \times \T^2 )} \lesssim R^{1+ \frac{3}{p}} \parenthese{\frac{N_{k,2}}{N_{k,1}}}^{\frac{2}{p}} \lesssim_R \varepsilon^{\frac{2000}{p}} .
\end{align*}

\item
The proof of this part is same as in {\bf Part (1)}.

\item
Finally, using the integral equation and Bernstein inequality, we have
\begin{align*}
\norm{P_{\leq \varepsilon^{10} N_{k,1} }\w_{k,1} }_{X^{\frac{1}{2}} (I)} & \lesssim \norm{P_{\leq \varepsilon^{10} N_{k,1}} \w_{k,1} (0)}_{H^{\frac{1}{2}}} + \norm{P_{\leq \varepsilon^{10} N_{k,1}} (i\partial_t + \Delta) \w_{k,1}}_{L_t^1 H_x^{\frac{1}{2}}} \\
& \lesssim (\varepsilon^{10} N_{k,1})^{\frac{1}{2}} \norm{\w_{k,1} (0)}_{L_x^2} +  (\varepsilon^{10} N_{k,1})^{\frac{1}{2}} \norm{(i\partial_t+ \Delta) \w_{k,1}}_{L_t^1 L_x^2}\\
& \lesssim_R \varepsilon^2 N_{k,1}^{\frac{1}{2}} N_{k,1}^{-\frac{1}{2}} =\varepsilon^5 .
\end{align*}
\end{enumerate}
We then finish the proof of Claim \ref{claim1}.
\end{proof}
Hence the proof Lemma \ref{lem:7.2} of is complete. 
\end{proof}

With Lemma \ref{lem:7.1} and Lemma \ref{lem:7.2} proved in the previous subsection, we get back to the proof of \eqref{eq:6.8} and \eqref{eq:6.9}.

\subsection{Proof of \eqref{eq:6.8}}
Let us start with the proof of \eqref{eq:6.8}. Write
\begin{align*}
\norm{F(U_{prof, k}^J + e^{it\Delta} R_k^J) - F(U_{prof,k}^J)}_{N(I_k)} \lesssim \sum_{p=0}^4 \norm{\oo_{p, 5-p} (U_{prof,k}J , e^{it\Delta} R_k^J)}_{N(I_k)}. 
\end{align*}
Using the nonlinear estimate (Lemma \ref{lem Nonlinear est}) and the uniform bound of $U_{prof, k}^J$, we can control these terms with $p \leq 3$ as follows:
\begin{align*}
\norm{ \oo_{p, 5-p} (U_{prof, l}^J , e^{it\Delta} R_k^J)}_{N(I_k)} & \lesssim \norm{e^{it\Delta} R_k^J}_{X^{\frac{1}{2}} (I_k)} \norm{e^{it\Delta}R_k^J}_{Z' (I_k)} \norm{U_{prof,k}^J}_{X^{\frac{1}{2}}(I_k)}^p\\
& \lesssim \norm{e^{it\Delta} R_k^J}_{Z' (I_k)} \to 0,
\end{align*}
as $k \to \infty$, $J \to \infty$.

Now we only need to treat the case when $p=4$. As in the proof of Claim \ref{claim2}, fix $\varepsilon >0$, then there exists $A = A(\varepsilon)$ sufficiently large, such that for all $J \geq A$ and $k \geq k_0 (J)$
\begin{align*}
\norm{U_{prof ,k}^J -U_{prof ,k}^A}_{X^{\frac{1}{2}} ((-1,1))} \leq \varepsilon .
\end{align*}
Then it remains to prove that
\begin{align*}
\limsup_{J \to \infty} \limsup_{k \to \infty} \norm{ \oo_{4,1} (U_{prof ,k}^A , e^{it\Delta}R_k^J )}_{N(I_k)} \lesssim \varepsilon.
\end{align*}
By the definition of $U_{prof,k}^A$, it suffices to prove that for any $\alpha_1, \alpha_2, \alpha_3$ and $\alpha_4 \in \{ 0, 1, \dots , A \}$
\begin{align}\label{9.19}
\limsup_{J \to \infty} \limsup_{k \to \infty} \norm{ \oo_{1,1,1,1,1} (U_k^{\alpha_1}, U_k^{\alpha_2}, U_k^{\alpha_3}, U_k^{\alpha_4}, e^{it\Delta} R_k^J )}_{N(I_k)} \lesssim \varepsilon A^{-4} .
\end{align}
If $0 \in \{ \alpha_1 , \alpha_2, \alpha_3, \alpha_4\}$, without loss of generality, we suppose $\alpha_1=0$. By Lemma \ref{lem Nonlinear est}, we have
\begin{align*}
& \quad \norm{\oo_{1,1,1,1,1} (G, U_k^{\alpha_2}, U_k^{\alpha_3}, U_k^{\alpha_4}, e^{it\Delta} R_k^J)}_{N(I_k)} \\
& \lesssim \parenthese{\norm{G}_{Z' (I_k)} \norm{e^{it\Delta} R_k^J}_{X^{\frac{1}{2}} (I_k)} + \norm{G}_{X^{\frac{1}{2}} (I_k)} \norm{e^{it\Delta} R_k^J}_{Z' (I_k)} } \norm{U_k^{\alpha_2}}_{X^{\frac{1}{2}} (I_k)} \norm{U_k^{\alpha_3}}_{X^{\frac{1}{2}} (I_k)} \norm{U_k^{\alpha_4}}_{X^{\frac{1}{2}} (I_k)}\\
& \lesssim \norm{G}_{Z' (I_k)} + \norm{e^{it\Delta} R_k^J}_{Z' (I_k)} \to 0 ,
\end{align*}
as $k \to \infty $, $J \to \infty$.

So it remains to prove \eqref{9.19} when $0 \notin  \{ \alpha_1 , \alpha_2, \alpha_3, \alpha_4\}$. We choose $\theta = \varepsilon A^{-4} /100$ and apply the decomposition in  Euclidean profiles. Then we have for all $k$, $\alpha \in \{ 1, \cdots , A \}$
\begin{align*}
\mathds{1}_{I_k} (t) U_k^{\alpha} = \w_k^{\alpha, \theta , -\infty} + \w_k^{\alpha, \theta , +\infty} \w_k^{\alpha, \theta } + \rho_k^{\alpha, \theta},
\end{align*}
satisfying the conditions \eqref{eq cor4.8} in Corollary \ref{cor Decomp nonlinear Euclidean profiles}. Since $\norm{\rho_k^{\alpha,\theta}}_{X^{\frac{1}{2}}} + \norm{\w_k^{\theta, \pm \infty}}_{Z' } \lesssim \theta$ for $k$ large enough and Lemma \ref{lem Nonlinear est}, it is sufficient to prove that
\begin{align*}
\limsup_{J \to \infty} \limsup_{k \to \infty} \norm{ \oo_{1,1,1,1,1} (\omega_k^{\alpha_1 , \theta} , \omega_k^{\alpha_2 , \theta} , \omega_k^{\alpha_3 , \theta}, \omega_k^{\alpha_4 , \theta}, e^{it\Delta} R_k^J)}_{N(I_k)} \lesssim \varepsilon A^{-4},
\end{align*}
where $\theta= \frac{1}{10} \varepsilon A^{-4}$.

If any two $\alpha_i's$ are different (without loss of generality, suppose $\alpha_1 \neq \alpha_2$), then by Lemma \ref{lem:7.2}
\begin{align*}
\limsup_{k \to \infty} \norm{ \oo_{1,1,1,1,1} (\omega_k^{\alpha_1 , \theta} , \omega_k^{\alpha_2 , \theta} , \omega_k^{\alpha_3 , \theta}, \omega_k^{\alpha_4 , \theta}, e^{it\Delta} R_k^J)}_{N(I_k)} =0.
\end{align*} 
So we only need to prove
\begin{align*}
\limsup_{J \to \infty} \limsup_{k \to \infty} \norm{\oo_{4,1} (\omega_k^{\alpha , \theta} , e^{it\Delta} R_k^J)}_{N(I_k)} \lesssim \varepsilon A^{-4}.
\end{align*}

We apply Lemma \ref{lem:7.1} with $B$ sufficiently large (depending on $R_{\theta}$); thus, fot any $J \geq A$, 
\begin{align}\label{eq 7.14}
\lim_{k \to \infty} \norm{\oo_{4,1} (\omega_k^{\alpha ,\theta} P_{> BN_k^{\alpha}} e^{it\Delta} R_k^J)}_{N(I_k)} \lesssim \varepsilon A^{-4}.
\end{align}

We may also assume that $B$ is sufficiently large, such that for $k$ large enough, using a similar estimate in the proof of Lemma \ref{lem:7.2}, we have
\begin{align}\label{eq 7.15}
\norm{P_{\leq B^{-1} N_k^{\alpha}} \omega_k^{\alpha,\theta}}_{X^{\frac{1}{2}} (I_k)} \lesssim \frac{1}{4} \varepsilon A^{-4}.
\end{align}

In the end, by \eqref{eq 7.14} and \eqref{eq 7.15}, it remains to prove that
\begin{align}\label{eq HL}
\limsup_{J \to \infty} \limsup_{k \to \infty} \norm{\oo_{4,1} (P_{B^{-1} N_k^{\alpha}} \omega_k^{\alpha , \theta} , P_{\leq BN_k^{\alpha}} e^{it\Delta} R_k^J)}_{N(I_k)} =0.
\end{align}
By using \eqref{lem3.8 2} in Lemma \ref{lem Nonlinear est}, we have the following estimates
\begin{align}\label{eq HL'}
\norm{\oo_{4,1} (P_{B^{-1} N_k^{\alpha}} \omega_k^{\alpha , \theta} , P_{\leq BN_k^{\alpha}} e^{it\Delta} R_k^J)}_{N(I_k)} \lesssim \norm{P_{\leq BN_k^{\alpha}} e^{it\Delta} R_k^J)}_{Z'(I_k)} \cdot \norm{P_{B^{-1} N_k^{\alpha}} \omega_k^{\alpha , \theta}}^4_{X^{\frac{1}{2}}(I_k)}.
\end{align}
Together with \[\limsup_{J \to \infty} \limsup_{k \to \infty} \norm{e^{it\Delta} R_k^J}_{Z'  (I_k)} =0,\] \eqref{eq HL'} implies \eqref{eq HL}.  Now we finish the proof of \eqref{eq:6.8}.

\subsection{Proof of \eqref{eq:6.9}}\label{ssec A2}
In this subsection, we work on the proof of \eqref{eq:6.9}. First we write
\begin{align*}
F(U_{prof,k}^J) - \sum_{0 \leq \alpha \leq J} F(U_k^{\alpha}) = \sum_{\{ \alpha_1, \cdots , \alpha_5\} \in \mathcal{G}_J} \oo_{1,1,1,1,1} (U_k^{\alpha_1}, \cdots, U_k^{\alpha_5}) 
\end{align*}
where 
\begin{align*}
\mathcal{G}_J = \bigg\{  \{ \alpha_1, \alpha_2 , \alpha_3, \alpha_4 , \alpha_5\}  : 0 \leq \alpha_1, \cdots , \alpha_5 \leq J   \text{ and } \{ \alpha_1, \cdots , \alpha_5\} \text{ contains at least two distinct numbers} \bigg\}.
\end{align*}
By Claim \ref{claim2}, for any $\varepsilon > 0$, we can find $A(\varepsilon)$ such that
\begin{align*}
\sum_{A \leq \alpha J} \norm{U_k^{\alpha}}_{X^{\frac{1}{2}}} \leq \varepsilon .
\end{align*}
So we have 
\begin{align*}
\norm{ \sum_{\{ \alpha_1, \cdots , \alpha_5\} \in \mathcal{G}_J} \oo_{1,1,1,1,1} (U_k^{\alpha_1}, \cdots, U_k^{\alpha_5})}_{N(I_k)}  \lesssim \norm{ \sum_{\{ \alpha_1, \cdots , \alpha_5\} \in \mathcal{G}_A} \oo_{1,1,1,1,1} (U_k^{\alpha_1}, \cdots, U_k^{\alpha_5}) }_{N(I_k)} + \varepsilon .
\end{align*}
Using Corollary 6.6 (decomposition of $U_k^{\alpha}$) and $\theta = \varepsilon A^{-4}/100$, we have
\begin{align*}
\norm{ \sum_{\{ \alpha_1, \cdots , \alpha_5\} \in \mathcal{G}_J} \oo_{1,1,1,1,1} (U_k^{\alpha_1}, \cdots, U_k^{\alpha_5})}_{N(I_k)} \lesssim \sum_F \norm{\oo_{1,1,1,1,1} (W_k^1, W_k^2,  W_k^3, W_k^4, W_k^5 )}_{N(I_k)},
\end{align*}
where 
\begin{align*}
F : = \bigg\{
(W_k^1, W_k^2,  W_k^3, W_k^4, W_k^5 ): W_k^i \in \{\w_k^{\alpha,\theta,+\infty}, \w_k^{\alpha,\theta,-\infty}, \w_k^{\alpha,\theta}, \rho_k^{\alpha,\theta}\}, 0\leq \alpha \leq A, \\ \text{ for } i=1, \cdots , 5, \text{ at least two different } \alpha's
\bigg\}.
\end{align*}
Note that for $\alpha =0$, we see that $\w_k^{0,\theta, +\infty} = G \mathds{1}_{(-T_{\theta}^{-1},T_{\theta}^{-1})}$, $\w_k^{\alpha, \theta, -\infty} = \w_k^{\alpha,\theta} = \rho_k^{\alpha,\theta} =0$. Let us consider the following cases:

\case{1}{$(W_k^1 , \cdots, W_k^5)$ contains at least one error component $\rho_k^{\alpha, \theta}$.} 
By the nonlinear estimate Lemma \ref{lem Nonlinear est}, we have 
\begin{align*}
\norm{\oo_{1,1,1,1,1} (W_k^1, W_k^2,  W_k^3, W_k^4, W_k^5)}_{N(I_k)} \lesssim \norm{\rho_k^{\alpha ,\theta}}_{X^{\frac{1}{2}}(I_k)} \lesssim \theta .
\end{align*}

\case{2}{$(W_k^1 , \cdots, W_k^5)$ contains at least two scattering components $\w_k^{\alpha, \theta, \pm \infty}$ and $\w_k^{\beta, \theta ,\pm \infty}$.} 
By Lemma \ref{lem Nonlinear est},
\begin{align*}
\norm{\oo_{1,1,1,1,1} (\w_k^{\alpha, \theta , \pm \infty} , w_k^{\beta, \theta ,\pm \infty} , W_k^3 , W_k^4 , W_k^5 )}_{N(I_k)}  \lesssim \norm{\w_k^{\alpha ,\theta , \pm \infty}}_{Z' (I_k)} + \norm{\w_k^{\beta ,\theta , \pm \infty}}_{Z' (I_k)} \lesssim 2 \theta.
\end{align*}

\case{3}{$(W_k^1 , \cdots, W_k^5)$ contains two orthogonal case $\w_k^{\alpha,\theta}$, $\w_k^{\beta,\theta}$ with $\alpha \neq \beta$.} 
By Lemma \ref{lem:7.2}, we obtain that 
\begin{align*}
\limsup_{k \to \infty} \norm{\oo_{1,1,1,1,1} (\w_k^{\alpha,\theta} , \w_k^{\beta,\theta}, W_k^3, W_k^4, W_k^5)}_{N(I)}.
\end{align*}

\case{4}{Other scenarios.}
More precisely, $\oo_{4,1} (\w_k^{\alpha ,\theta} , \w_k^{\beta ,\theta , \pm \infty})$ for  $\alpha \neq \beta$, $\alpha = 1,2,\cdots, A$, $\beta = 0, 1, \cdots, A$ and for $\alpha=0$, $\w^{0,\theta, \pm \infty} : = G \mathds{1}_{(-T_{\theta}^{-1} , T_{\theta}^{-1}) }(t)$.

Let us discuss the sub-cases in this regime.
\begin{enumerate}[\bf (a)]
\item
\scase{4.1}{$\limsup_{k \to \infty} \frac{N_{k,\beta}}{N_{k,\alpha}} = \infty$.}
By a direct consequence of Lemma \ref{lem:7.1}, choosing $B$ and $k$ large enough, we have that
\begin{align*}
\norm{\oo_{4,1} (\w^{\alpha, \theta} , P_{>BN_{k,\alpha}} \w^{\beta, \theta, \pm \infty}) }_{N(I_k)} \lesssim B^{-1/200} + N_{k,\alpha}^{-1/200} \lesssim \theta .
\end{align*}
For the other part, we write
\begin{align*}
\norm{\oo_{4,1} (\w^{\alpha,\theta} , P_{\leq BN_{k,\alpha}} \w^{\beta, \theta, \pm \infty})}_{N(I_k)} \lesssim \norm{P_{\leq BN_{k,\alpha}} \w^{\beta , \theta , \pm \infty}}_{X^{\frac{1}{2}}(I_k)} \norm{\w^{\alpha , \theta}}_{X^{\frac{1}{2}}(I_k)}^4 \lesssim \theta,
\end{align*}
which follows 
\begin{align*}
\norm{P_{\leq BN_{k,\alpha}} \w^{\beta , \theta , \pm \infty}}_{X^{\frac{1}{2}}(I_k)} = \norm{P_{\leq BN_{k,\alpha}} \Pi_{x_k^{\beta}} T_{N_{k,\beta}} (\phi^{\beta , \theta , \pm \infty})}_{X^{\frac{1}{2}}(I_k)} = \norm{P_{\leq B \frac{N_{k,\alpha}}{N_{k,\beta}}}  \phi^{\beta , \theta , \pm \infty}}_{\dot{H}^{\frac{1}{2}} (\R^2)} \to 0 ,
\end{align*}
as $k \to \infty$.

\item
\scase{4.2}{$\limsup_{k \to \infty} \frac{N_{k,\alpha}}{N_{k,\beta}} = + \infty$.}
By a similar estimate as in {\bf (5)}  in Claim \ref{claim1}, we have
\begin{align}\label{eq Case 4 1}
\norm{P_{\leq N_{k,\beta}} \w_k^{\alpha ,\theta}}_{X^{\frac{1}{2}}(I_k)} = \norm{P_{\leq N_{k,\alpha} \frac{N_{k,\beta}}{N_{k,\alpha}}}\w_k^{\alpha ,\theta}}_{X^{\frac{1}{2}} (I_k)}  \lesssim \theta
\end{align}
as $k$ is large enough. We also have
\begin{align}\label{eq Case 4 2}
\norm{P_{\geq BN_{k,\beta} \w^{\beta, \theta , \pm \infty}}}_{X^{\frac{1}{2}}(I_k)} = \norm{P_{\geq BN_{k,\beta}} (T_{N_k^{\beta}} (P_{\leq R_{\theta ,\beta}} \phi^{\beta, \theta, \pm \infty})  )}_{H^{\frac{1}{2}} (\T^2)} \lesssim B^{-1} + \theta \lesssim \theta
\end{align}
by taking $k$ and $B$ large enough.

By Lemma \ref{lem Nonlinear est}, \eqref{eq Case 4 1} and \eqref{eq Case 4 2} , we have
\begin{align*}
\norm{\oo_{4,1} (\w_k^{\alpha ,\theta} , \w_k^{\beta, \theta, \pm \infty})}_{N(I_k)} & \lesssim \theta + \norm{\oo_{4,1} (P_{>N_{k,\beta}} \w_k^{\alpha ,\theta} , P_{\leq BN_{k,\beta}} \w^{\beta, 
\theta , \pm \infty})}_{N(I_k)} \\
& \lesssim \norm{P_{\leq BN_{k,\beta}} \w^{\beta ,\theta , \pm \infty}}_{Z' (I_k)} \norm{P_{>N_{k,\beta}} \w_k^{\alpha , \theta}}_{X^{\frac{1}{2}}(I_k)}^4 \lesssim \theta.
\end{align*}

\item
\scase{4.3}{$N_{k,\alpha} = N_{k,\beta}$ and $t_k^\alpha = t_k^\beta$ as $k \to \infty$.}
By definition of $\w_k^{\alpha , \theta}$ and $\w_k^{\beta , \theta , \pm \infty}$ in Corollary 6.6, for $k$ large enough, we have that $\w_k^{\alpha ,\theta} \w_k^{\beta , \theta , \pm \infty} =0$.

\item
\scase{4.4}{$N_{k,\alpha} = N_{k,\beta}$ and $N_{k,\alpha}^2 \abs{t_k^{\alpha} - t_k^{\beta}} \to \infty$, as $k\to \infty$.} 
By \eqref{eq Claim2} and \eqref{eq Claim3}, for any $T \leq N_k$, we obtain that 
\begin{align}\label{eq Case 4 3}
\norm{\w_k^{\beta, 
\theta , \pm \infty}}_{L_x^2(\T^2)} = \norm{P_{\leq R_{\theta} N_{k,\beta}} \w_k^{\beta , \theta, \pm \infty}}_{L_x^2(\T^2)} \lesssim (1+R_{\theta})^{-10} N_{k,\beta}^{-\frac{1}{2}},
\end{align}
and
\begin{align}\label{eq Case 4 4}
\sup_{\abs{t-t_k^{\beta}} \in [TN_{k,\beta}^{-2} , T^{-1}]} \norm{\w_k^{\beta , \theta , \pm \infty}}_{L_x^{\infty} (\T^2)} \lesssim T^{-1} N_{k,\beta}^{\frac{1}{2}} .
\end{align}
Interpolating \eqref{eq Case 4 3} and \eqref{eq Case 4 4}, we have that
\begin{align}\label{8.11}
\sup_{\abs{t-t_k^{\beta}} \in [TN_{k,\beta}^{-2} , T^{-1}]} \norm{\w_k^{\beta , \theta , \pm \infty}}_{L_x^p (\T^2)} \lesssim T^{\frac{2}{p}-1} N_{k,\beta}^{\frac{1}{2} -\frac{2}{p}} .
\end{align}
Denote $N_{k,\alpha} = N_{k,\beta} := N_k$. By choosing $T_k = N_k \abs{t_k^{\alpha} - t_k^{\beta}}^{\frac{1}{2}} \to \infty$ as $k \to \infty$, we have also
\begin{align*}
\sup_{t \in [t_k^{\alpha} - \frac{T_\theta}{N_{k,\alpha}^2} , t_k^{\alpha} + \frac{T_\theta}{N_{k,\alpha}^2} ]} \norm{\w^{\beta, \theta , \pm \infty}}_{L_x^{\infty} (\T^2)} \lesssim T_k^{-1} N_{k}^{\frac{1}{2}}.
\end{align*}

So by Leibniz rule, \eqref{8.11} and Corollary \ref{cor Decomp nonlinear Euclidean profiles}, we have that
\begin{align}
& \quad \norm{\oo_{4,1} (\w_k^{\alpha ,\theta} , \w_k^{\beta , \theta , \pm \infty})}_{N([t_k^{\alpha } - \frac{T_{\theta}}{N_{k,\alpha}^2} , t_k^{\alpha } + \frac{T_{\theta}}{N_{k,\alpha}^2} ])} \\
&  \lesssim \norm{\oo_{4,1} (\w_k^{\alpha ,\theta} , \w_k^{\beta , \theta , \pm \infty})}_{L_t^1 H_x^{\frac{1}{2}} ([t_k^{\alpha } - \frac{T_{\theta}}{N_{k,\alpha}^2} , t_k^{\alpha } + \frac{T_{\theta}}{N_{k,\alpha}^2} ] \times \T^2)} \notag\\
& \lesssim \int_{t_k^{\alpha } - \frac{T_{\theta}}{N_{k,\alpha}^2}}^{t_k^{\alpha } + \frac{T_{\theta}}{N_{k,\alpha}^2}} \norm{\w_k^{\alpha ,\theta}}_{L_x^8(\T^2)}^4 \norm{\inner{\nabla}^{\frac{1}{2}} \w_k^{\beta, \theta, \pm \infty}}_{L_x^{\infty}(\T^2)} \, dt  \notag\\
& \quad + \int_{t_k^{\alpha } - \frac{T_{\theta}}{N_{k,\alpha}^2}}^{t_k^{\alpha } + \frac{T_{\theta}}{N_{k,\alpha}^2}} \norm{\inner{\nabla}^{\frac{1}{2}} \w_k^{\alpha ,\theta}}_{L_x^8(\T^2)} \norm{\w_k^{\alpha ,\theta}}_{L_x^8(\T^2)}^3 \norm{\w_k^{\beta, \theta, \pm \infty}}_{L_x^{\infty}(\T^2)} \, dt \label{eq Case 4 5}
\end{align}
Using Corollary \ref{cor Decomp nonlinear Euclidean profiles}, we have
\begin{align}\label{eq Case 4 6}
\begin{aligned}
&\norm{\w_k^{\alpha ,\theta}}_{L_x^8} \lesssim R_{\theta}^{\frac{5}{4}} N_k^{\frac{1}{4}} ,\\
&\norm{\inner{\nabla}^{\frac{1}{2}} \w_k^{\alpha ,\theta}}_{L_x^8} \lesssim R_{\theta}^{\frac{5}{4}} N_k^{\frac{3}{4}} .
\end{aligned}
\end{align}
Continuing from \eqref{eq Case 4 5}, \eqref{eq Case 4 6} and \eqref{8.11} give us
\begin{align*}
\eqref{eq Case 4 5} \lesssim \frac{T_{\theta}}{N_k^2} (R_{\theta}^5 N_k \cdot T_k^{-1} N_k^{\frac{1}{2}} (R_{\theta} N_k)^{\frac{1}{2}} + R_{\theta}^5 N_k^{\frac{3}{2}} \cdot T_k^{-1} N_k^{\frac{1}{2}}) \lesssim T_{\theta} R_{\theta}^5 T_k^{-1} \to 0 ,
\end{align*}
as $k \to \infty $.
Summarizing all above cases, we have by taking$\theta = \frac{\varepsilon}{100 A^4}$. So we prove that
\begin{align*}
\norm{F(U_{prof,k}^J) - \sum_{0 \leq \alpha \leq J} F(U_k^{\alpha})}_{N(I_k)} \lesssim \varepsilon ,
\end{align*}
for arbitrary small $\varepsilon$.
\end{enumerate}
Now we finish the proof of \eqref{eq:6.9}.

\bibliography{NLS}
\bibliographystyle{plain}
\end{document}